\begin{document}
\title[Symmetries captured by actions of weak Hopf algebras]{Symmetries of algebras captured by actions of weak Hopf algebras}
\author{Fabio Calder\'{o}n}
\author{Hongdi Huang}
\author{Elizabeth Wicks}
\author{Robert Won}
\address{Calder\'{o}n: Department of Mathematics, Universidad de Los Andes, Bogot\'{a}, DC, Colombia}
\email{f.calderonmateus@uniandes.edu.co}
\address{Huang: Department of Mathematics, Shanghai University, Shanghai 200444, China}
\email{hdhuang@shu.edu.cn}
\address{Wicks: Microsoft Corporation, Redmond, WA 98052, USA}
\email{elizabeth.wicks@microsoft.com}
\address{Won: Department of Mathematics, The George Washington University, Washington, DC 20052, USA}
\email{robertwon@gwu.edu}
\begin{abstract} 
In this paper, we present a generalization of well-established results regarding  symmetries of $\Bbbk$-algebras, where $\Bbbk$ is a field. Traditionally, for a $\Bbbk$-algebra $A$, the group of $\Bbbk$-algebra automorphisms of $A$ captures the symmetries of $A$ via group actions. Similarly, the Lie algebra of derivations of $A$ captures the symmetries of $A$ via Lie algebra actions. In this paper, given a category $\mathcal{C}$ whose objects possess $\Bbbk$-linear monoidal categories of modules, we introduce an object $\operatorname{Sym}_{\mathcal{C}}(A)$ that captures the symmetries of $A$ via actions of objects in $\mathcal{C}$. Our study encompasses various categories whose objects include groupoids, Lie algebroids, and more generally, cocommutative weak Hopf algebras. Notably, we demonstrate that for a positively graded non-connected $\Bbbk$-algebra $A$, some of its symmetries are naturally captured within the weak Hopf framework. 
\end{abstract}

\subjclass[2020]{18B40, 16T05, 18M05}
\keywords{cocommutative, groupoid, Hopf algebra, module algebra, weak Hopf algebra}
\maketitle




\section{Introduction}\label{sec:intro}
Let $\kk$ be an algebraically closed field of characteristic $0$. Symmetries of noncommutative algebras over $\kk$ have been extensively studied in the context of Hopf algebra actions and coactions, with notable success for connected $\kk$-algebras 
(see, e.g., \cite{Mo, K, CKWZ3, EW1}). However, extending these results to not-necessarily-connected $\kk$-algebras is not a straightforward task. In recent years, the study of actions and coactions of weak Hopf algebras has emerged as a promising avenue for investigating symmetries of such $\kk$-algebras, with several attempts made in this direction (see, e.g., \cite{BNS, C-DG, Nik02, HWWW20,WWW}). In this article, we contribute to this ongoing effort by providing further insights and results for (cocommutative) weak Hopf algebra actions on noncommutative, not necessarily connected $\kk$-algebras. 

Our main motivation for this article is the well-known correspondence between two action frameworks for groups and Lie algebras: the automorphism group $\AutA$ captures the group actions on $A$, while the Lie algebra of derivations $\Der(A)$ captures the Lie algebra actions on $A$. Namely, for a group $G$, a $G$-module algebra structure on a $\kk$-algebra $A$ yields a group morphism $G \to \AutA$, and conversely any group morphism $G \to \AutA$ induces a $G$-module algebra structure on $A$. Likewise, for a Lie algebra $\mf{g}$, a $\mf{g}$-module algebra structure on $A$ yields a Lie algebra morphism $\mf{g} \to \Der(A)$, and conversely any Lie algebra morphism $\mf{g} \to \Der(A)$ induces a $\mf{g}$-module algebra structure on $A$. 

Our aim is to study symmetries of $\kk$-algebras $A$, captured by actions of algebraic structures $H$ which resemble (cocommutative) Hopf algebras. To achieve this, we will generalize the classical correspondences to establish a connection between categorical and representation-theoretic frameworks for these $H$-actions on $A$.

Throughout this paper, we will focus on a specific set of categories.

\begin{notation}\label{not:cats}
 Let $X$ be a nonempty finite set. We consider the following categories:
 \begin{itemize}
 \item $\Group$, of \emph{groups} together with group morphisms,

 \item $\Lie$, of \emph{Lie algebras} together with Lie algebra morphisms,
 
 \item $\Hopf$, of \emph{Hopf algebras} together with Hopf algebra morphisms,

 \item  $\Grpd$, of \emph{groupoids} together with groupoid morphisms (i.e., functors),
  
 \item $\XGrpd$, the subcategory of $\Grpd$ whose objects are groupoids with object set $X$ (called \emph{$X$-groupoids}), together with $X$-preserving groupoid morphisms (see Definition~\ref{def:XGrpd}),
 
 \item $\XLie$, of \emph{$X$-Lie algebroids}, together with $X$-Lie algebroid morphisms (see Definition~\ref{def:XLie}),
 
 \item $\XWHA$, of weak Hopf algebras with a complete set of grouplike idempotents indexed by $X$ (called \emph{$X$-weak Hopf algebras}), together with $X$-preserving weak Hopf algebra morphisms (see Definition~\ref{def:xxx}).
 \end{itemize}
 
 We also consider the following full subcategories of $\Hopf$:
 \begin{itemize}
 \item $\GrAlg$, of \emph{group algebras},
 
 \item $\EnvLie$, of \emph{enveloping algebras of Lie algebras},
 
 \item $\CocomHopf$, of \emph{cocommutative Hopf algebras}.
 \end{itemize}
 
 As well as the following full subcategories of $\XWHA$:
 \begin{itemize}
 \item $\XGrAlg$, of \emph{$X$-groupoid algebras} (see Definition~\ref{def:xxx}),
 
 \item $\XEnvLie$, of \emph{$X$-enveloping algebras of $X$-Lie algebroids} (see Definition~\ref{def:envelopLiealgebroid}),
 
 \item $\XCocomWHA$, of \emph{cocommutative $X$-weak Hopf algebras}.
 \end{itemize}

Note that all the categories $\CC$ mentioned above have a common property: for any object $H$ in $\CC$, there is a notion of a category of left $H$-modules, which we denote $H\lmod$, and this category is a $\kk$-linear monoidal category (although $\otimes_{H\lmod}$ is not necessarily given by $\otimes_{\kk}$). We refer to any of the categories above as a \emph{category of Hopf-like structures} and provide further details in Remark~\ref{def:catHopflike}.
\end{notation}

The categories listed below consist of objects that will be acted on by Hopf-like structures.

\begin{notation}
\label{not:cats2}
Let $X$ be a nonempty finite set. We consider the following categories:
\begin{itemize}
 \item $\Set$, of \emph{sets} together with set-theoretic functions,
 \item $\Vec$, of \emph{$\kk$-vector spaces} together with linear maps,
 \item $\Algcat$, of \emph{$\kk$-algebras} together with $\kk$-algebra morphisms,
 \item $\XAlg$, the subcategory of $\Algcat$ whose objects are $\kk$-algebras with a complete set of nonzero orthogonal idempotents indexed by $X$ (called \emph{$X$-algebras}), together with $X$-preserving $\kk$-algebra morphisms (see Definition~\ref{def:xxx}).
  \end{itemize}
\end{notation}

Throughout, we assume the following hypothesis.
 
\begin{hypothesis*}
An unadorned $\otimes$ signifies $\otimes_\kk$. Unless otherwise stated, all $\kk$-algebras are associative and unital, and $\kk$-algebra morphisms are unital. If not explicitly specified, modules are assumed to be left-sided.
\end{hypothesis*}

Suppose $\CC$ is a category of Hopf-like structures. If $H$ is an object in $\CC$, we say that $H$ \emph{acts} on a $\kk$-algebra $A$ if $A$ is an \emph{algebra object} in the category $H\lmod$ (Definition~\ref{def:cataction}).

Let $A$ be an arbitrary $\kk$-algebra. In this paper, our main results identify a distinguished object in $\CC$, called the \emph{symmetry object} over $A$ in $\CC$ and denoted as $\Sym_{\CC}(A)$, which captures all actions on $A$ by objects of $\CC$. Specifically, we define $\Sym_{\CC}(A)$ as an object in $\CC$ acting on $A$ with the property that, for every object $H$ in $\CC$, there is a bijective correspondence between $H$-actions on $A$ and morphisms $H \to \Sym_{\CC}(A)$ in $\CC$ (see Definition~\ref{def:Sym}). Hence, the classical results mentioned above as our motivation can be concisely restated as follows: for a $\kk$-algebra $A$, $\Sym_{\Group}(A)=\AutA$ (Remark~\ref{rem:group}) and $\Sym_{\Lie}(A)=\Der(A)$ (Remark~\ref{rem:Lie}).

The first category we study in this paper is the category $\XGrpd$. 

\begin{definition*}[Definitions~\ref{def:XdecompA},~\ref{def:autgroupoid2}]
Let $X$ be a nonempty finite set.
\begin{enumerate}[label=\normalfont(\alph*)]
\item We say that a nonzero $\kk$-algebra $A$ is an {\it $X$-decomposable algebra} if $A$ has a $\kk$-algebra decomposition $A = \bigoplus_{x \in X} A_x$, where each $A_x$ is a unital $\kk$-algebra (some of which may be $0$). 
\item For an $X$-decomposable $\kk$-algebra $A = \bigoplus_{x \in X} A_x$, define the groupoid, $\XAutA$, to have object set $X$, and where $\Hom_{\XAutA}(x,y)$ is the set of unital $\kk$-algebra isomorphisms $A_x \to A_y$, for all $x,y \in X$.
\end{enumerate}
\end{definition*}

Our first main result identifies the symmetry object $\Sym_{\XGrpd}(A)$ over $X$-decomposable algebras in $\XGrpd$. Notice that, by taking $X$ to be the singleton set, we recover the classical result for groups mentioned above.

\begin{theorem*} [Proposition~\ref{prop:groupoidalg}]
Let $A = \bigoplus_{x\in X} A_x$ be an $X$-decomposable $\kk$-algebra. Then $A$ is an $\XAutA$-module algebra. Moreover, for any $X$-groupoid $\GG$, the $\GG$-module algebra structures on $A$ correspond bijectively to the $X$-groupoid morphisms $\GG \to \XAutA$. Hence, $\Sym_{\XGrpd}(A) = \XAutA$. 
\end{theorem*}

Our work on groupoid actions on $\kk$-algebras unifies previous work on so-called partial actions of groupoids on $\kk$-algebras (e.g., \cite{BP2012,PF13,PT2014}) with categorical methods.
The findings above can be linearized for actions of groupoid algebras via the result below, which generalizes well-known adjunctions for groups.

\begin{theorem*}[Theorem~\ref{thm:repkGrp}]
Let $X$ be a nonempty finite set.
\begin{enumerate}[label=\normalfont(\alph*)]
\item The groupoid algebra functor
\[
\kk(-): \XGrpd \longrightarrow \XAlg
\]
is well-defined. Moreover, it is left adjoint to the functor,
\[
(-)^\times_X:\XAlg \longrightarrow \XGrpd,
\]
that forms a groupoid of local units with object set $X$ \textnormal{(Definition~\ref{def:groupoidL})}.
\item The groupoid algebra functor 
\[
\kk(-): \XGrpd \longrightarrow \XWHA
\]
is well-defined. Moreover, it is left adjoint to the functor,
\[
\Gamma(-): \XWHA \longrightarrow \XGrpd,
\]
that forms a groupoid of grouplike elements with object set $X$ \textnormal{(Definition~\ref{def:groupoidGamma})}.
\end{enumerate}
\end{theorem*}

By applying this linearization, we calculate the symmetry object over $X$-decomposable algebras in the category of $X$-groupoid algebras $\XGrAlg$, and as a special case, the classical result for the category of group algebras (Remark~\ref{rem:kGrp}).

\begin{corollary*}[Proposition~\ref{prop:kGrpd}]
Let $A = \bigoplus_{x\in X} A_x$ be an $X$-decomposable $\kk$-algebra. Then $A$ is a $\kk(\XAutA)$-module algebra. Moreover, for any $X$-groupoid $\GG$, the $\kk\GG$-module algebra structures on $A$ correspond bijectively to the $X$-weak Hopf algebra morphisms $\kk \GG \to \kk(\XAutA)$. Hence, $\Sym_{\XGrAlg}(A) = \kk(\XAutA)$.
\end{corollary*}

As an application of the preceding findings, we establish a result that showcases the reduction of studying groupoid actions on the $\kk$-algebra $A$ to the study of group actions when $A$ is a domain. This result highlights how the study of groupoid actions represents a genuine extension of the classical theory of group actions on connected $\kk$-algebras.

\begin{corollary*}[Corollary~\ref{cor:grpoid-innerfaith}]
Suppose that $\GG$ is a finite groupoid and $A$ is a domain such that $A$ is an inner faithful $\kk \GG$-module algebra. Then $\GG$ is a disjoint union of groups, and at most one of the groups is nontrivial.
\end{corollary*}

Now, let us shift our focus to the category of $X$-Lie algebroids. Nikshych established a generalization of the Cartier--Gabriel--Kostant--Milnor--Moore theorem for cocommutative weak Hopf algebras $H$, which states that $H$ is isomorphic to $\left(\bigoplus_{x \in X}U(\mf{g}_x)\right) \# \kk \GG$, where $\GG$ is a groupoid acting by conjugation on a direct sum, indexed by a nonempty finite set $X$, of universal enveloping algebras of Lie algebras $\{\mf{g}_x\}_{x \in X}$ \cite[Theorem~3.2.4]{NDthesis}. Motivated by this result, we introduce the notion of an  \emph{$X$-Lie algebroid}, denoted by $\GLie$, which is defined as a direct sum of Lie algebras $\GLie = \bigoplus_{x \in X} \mf{g}_x$ equipped with a partially defined bracket (see Definition~\ref{def:Liealgebroid}). The \emph{$X$-universal enveloping algebra} of $\GLie$ is defined as $U_X(\GLie) = \bigoplus_{x \in X} U(\mf{g}_x)$ (Definition~\ref{def:envelopLiealgebroid}). We prove that actions of $X$-Lie algebroids correspond to actions of their $X$-universal enveloping algebras via the result below, which generalizes well-known adjunctions for Lie algebras.

\begin{theorem*}[Theorem~\ref{thm:adjLie}]
Let $X$ be a nonempty finite set.
\begin{enumerate}[label=\normalfont(\alph*)]
\item The $X$-universal enveloping algebra functor
\[
U_x(-): \XLie \longrightarrow \XAlg
\]
is well-defined. Moreover, it is left adjoint to the functor,
\[
\mathcal{L}_X(-):\XAlg \longrightarrow \XLie,
\]
that forms an associated $X$-Lie algebroid \textnormal{(Example~\ref{ex:LieXAlg})}.
\item The $X$-universal enveloping algebra functor 
\[
U_X(-): \XLie \longrightarrow \XWHA
\]
is well-defined. Moreover, it is left adjoint to the functor,
\[
P_X(-): \XWHA \longrightarrow \XLie,
\]
that forms an $X$-Lie algebroid of primitive elements \textnormal{(Example~\ref{ex:primitiveWHA})}.
\end{enumerate}
\end{theorem*}

We then calculate the symmetry object over $X$-decomposable algebras in the categories of $X$-Lie algebroids and $X$-universal enveloping algebras, obtaining a generalization of the classical result for Lie algebras and universal enveloping algebras of Lie algebras as follows. Here, for an $X$-decomposable $\kk$-algebra $A = \bigoplus_{x \in X}A_x$ we denote by $\Der_X(A)$ the $X$-Lie algebroid $\bigoplus_{x \in X} \Der(A_x)$ (Notation~\ref{not:GLLie2}).

\begin{theorem*}[Proposition~\ref{prop:Liealgebroidalg}] 
Let $A = \bigoplus_{x\in X} A_x$ be an $X$-decomposable $\kk$-algebra. Then $A$ is an $\Der_X(A)$-module algebra. Moreover, for any $X$-Lie algebroid $\GLie$, the $\GLie$-module algebra structures on $A$ correspond bijectively to the $X$-Lie algebroid morphisms $\GLie \to \Der_X(A)$. Hence, $\Sym_{\XLie}(A) = \Der_X(A)$.
\end{theorem*}

\begin{corollary*}[Proposition~\ref{prop:U(GLie)}]
Let $A = \bigoplus_{x\in X} A_x$ be an $X$-decomposable $\kk$-algebra. Then $A$ is an $U_X(\Der_X(A))$-module algebra. Moreover, for any $X$-Lie algebroid $\GLie$, the $U_X(\GLie)$-module algebra structures on $A$ correspond bijectively to the $X$-weak Hopf algebra morphisms $U_X(\GLie) \to U_X(\Der_X(A))$. Hence, $\Sym_{\XEnvLie}(A) = U_X(\Der_X(A))$.
\end{corollary*}

This brings us to the final main result of our paper, which calculates the symmetry object over $X$-decomposable algebras in the category of cocommutative $X$-weak Hopf algebras.
	 
\begin{theorem*}[Theorem~\ref{thm:cocomweakHopf}]
Let $A = \bigoplus_{x\in X} A_x$ be an $X$-decomposable $\kk$-algebra and define $K := U_X(\Der_X(A)) \# \kk(\XAutA)$. Then $A$ is an $K$-module algebra. Moreover, for any cocommutative $X$-weak Hopf algebra $H$, the $H$-module algebra structures on $A$ correspond bijectively to the $X$-weak Hopf algebra morphisms $H \to K$. Hence, $\Sym_{\XCocomWHA}(A) = K$.
\end{theorem*}

By taking $X$ to be the singleton set above, we obtain the following result for actions of cocommutative Hopf algebras.

\begin{theorem*}[\Cref{CocomHopf}] 
Let $A$ be a $\kk$-algebra and define $K := U(\Der(A)) \# \kk(\AutA)$. Then $A$ is a $K$-module algebra. Moreover, for any cocommutative Hopf algebra $H$, the $H$-module algebra structures on $A$ correspond bijectively to the Hopf algebra morphisms $H \to K$. Hence, $\Sym_{\CocomHopf}(A) = K$.
\end{theorem*}

The results presented in this paper reinforce the idea that weak Hopf algebra actions provide a promising framework for investigating the symmetries of noncommutative $\kk$-algebras, particularly in the non-connected case. We summarize our constructions of $\Sym_{\CC}(A)$ for the various categories $\CC$ of Notation~\ref{not:cats} in Table~\ref{tab:summary}.

\begin{table}[H]
\centering
{\footnotesize
\begin{tabular}{|c|c|c|}
\hline 
& &\\ [-1em]
\underline{Category $\CC$ of classical Hopf-like structures} & \underline{$\Sym_{\CC}(A)$ for algebra $A$} & \underline{Reference}\\ [.2em]

$\Group$ & $\AutA$ & Rem.~\ref{rem:group}\\ [.2em]

$\GrAlg$ & $\kk(\AutA)$ & Rem.~\ref{rem:kGrp}\\ [.2em]

$\Lie$ & $\Der(A)$ & Rem.~\ref{rem:Lie}\\ [.2em]

$\EnvLie$ & $U(\Der(A))$ & Rem.~\ref{rem:U(Lie)} \\ [.2em]

$\CocomHopf$ & $U(\Der(A)) \# \kk(\AutA)$ & Rem.~\ref{CocomHopf} \\ [.2em]
\hline
\hline &\\ [-1em]
\underline{Category $\CC$ of Hopf-like structures with base $X$} & \underline{$\Sym_{\CC}(A)$ for $X$-decomp. \hspace{-.05in} alg. \hspace{-.05in} $A$} & \underline{Reference}\\ [.2em]

$\XGrpd$ & $\XAutA$ & Prop.~\ref{prop:groupoidalg} \\ [.2em]

$\XGrAlg$ & $\kk(\XAutA)$ & Prop.~\ref{prop:kGrpd} \\[.2em]

$\XLie$ & $\Der_X(A)$ & Prop.~\ref{prop:Liealgebroidalg} \\[.2em]

$\XEnvLie$ & $U_X(\Der_X(A))$ & Prop.~\ref{prop:U(GLie)} \\[.2em]

$\XCocomWHA$ & $U_X(\Der_X(A)) \# \kk(\XAutA)$ & Thm.~\ref{thm:cocomweakHopf} \\[.2em]
\hline
\end{tabular}

\caption{The symmetry object $\Sym_{\CC}(A)$ in the category $\CC$ that captures symmetries of $\kk$-algebras $A$ by actions of objects in $\CC$.}\label{tab:summary}
}
\end{table}

The paper is structured as follows. In Section~\ref{sec:symdef}, we provide a precise definition of $\Sym_{\CC}(A)$, where $\CC$ is a category of Hopf-like structures and $A$ is a $\kk$-algebra. In Section~\ref{sec:groupoid}, we investigate groupoid actions and extend them linearly to study groupoid algebra actions in Section~\ref{sec:groupoidalg}. Section~\ref{sec:Liealgebroids} is dedicated to the study of $X$-Lie algebroids and their universal enveloping algebras. Leveraging Nikshych's generalization of the Gabriel--Kostant--Milnor--Moore theorem, we are able to study actions of cocommutative weak Hopf algebras in Section~\ref{sec:cocomweak}. Finally, in Section~\ref{sec:examples}, we illustrate our results for actions on polynomial algebras.

\section{Symmetries in module categories over Hopf-like structures}
\label{sec:symdef}

Recall that an additive category is called \emph{$\kk$-linear} if the hom-sets are $\kk$-vector spaces and the composition of morphisms is bilinear over $\kk$. A \emph{monoidal category} $(\DD,\otimes_{\DD},\unit_{\DD})$ is a category $\DD$ equipped with a bifunctor $\otimes_{\DD} : \DD \times \DD \to \DD$, an object $\unit_{\DD}$, and natural isomorphisms for associativity and unitality such that the pentagon and triangle axioms hold (see e.g. \cite[Ch.~2]{EGNO} for details).

While we do not provide a precise set of axioms that define a category $\CC$ as consisting of ``Hopf-like structures" beyond the given list in Notation~\ref{not:cats}, these categories do share several common properties, which we discuss in the following remark. However, we hope that the results presented in this paper can be extended to other categories, including those involving non-cocommutative Hopf-like structures. Exploring this extension is an open question for future research.

\begin{remark}\label{def:catHopflike}
Let $\CC$ be any of the categories of Hopf-like structures introduced in Notation~\ref{not:cats}. We will see in this paper that each of these categories possesses the following properties:
\begin{enumerate}[label=\normalfont(\alph*)]
    \item $\CC$ is a concrete category.
    \item For each object $H$ in $\CC$, there exists a notion of a quotient object $H/I$ in $\CC$ associated with certain subsets $I$ of $H$. The set $I$ may not necessarily correspond to a subobject of $H$ in $\CC$, nor does it necessarily need to be an object in $\CC$.
    \item One of the following cases occurs:
    \begin{enumerate}[label=\normalfont(\Roman*)]
        \item There exists a bifunctor $\boxtimes_{\CC}: \CC \times \Vec \to \Set$, so that the elements of $H \boxtimes_{\CC} V$ have the form $h \boxtimes_{\CC} v$, for certain $h \in H$ and $v \in V$; in this case we say $\CC$ is of \emph{type I}.
        \item There exists a bifunctor $\boxtimes_{\CC}: \CC \times \Vec \to \Vec$, so that the elements of $H \boxtimes_{\CC} V$ are sums of elements of the form $h \boxtimes_{\CC} v$, for certain $h \in H$ and $v \in V$; in this case we say $\CC$ is of \emph{type II}. 
    \end{enumerate}
    The  specifics of $\boxtimes_{\CC}$ depend on the category $\CC$, and details will be given for each specific category (see Example~\ref{ex:hopflike} below).
    \item For each object $H$ in $\CC$, there is a notion of endowing a $\kk$-vector space $V$ with a structure of $H$-module via a set-theoretic function $\ell_{H,V}: H \boxtimes_{\CC} V \to V$ (if $\CC$ is of type I) or a $\kk$-linear map $\ell_{H,V}: H \boxtimes_{\CC} V \to V$ (if $\CC$ is of type II). In either case, we use the notation $h \cdot v$ to denote $\ell_{H,V}(h \boxtimes v)$ and say that $H$ \emph{acts} on $V$.
       \item For each object $H$ in $\CC$, there is a notion of a morphism of $H$-modules, and thus there exists a  category $H\lmod$ of $H$-modules and their morphisms. This category can be given a $\kk$-linear monoidal structure $(H\lmod, \otimes_{H\lmod}, \unit_{H\lmod})$.
\end{enumerate}

In summary, the categories of Hopf-like structures introduced in this work are characterized by the ability to associate a $\kk$-linear monoidal category of modules to each object, with actions on these modules defined by specific functions (called \emph{action maps}). The following classical categories all satisfy the properties in the above remark.
\end{remark}

\begin{example}\label{ex:hopflike}
  \begin{itemize}
        \item When $\CC = \Group$, we may take $\boxtimes_{\Group}=\times$, the set-theoretic Cartesian product. In this case, if $G$ is a group, the action map $\ell_{G,V}: G \times V \to V$ is a $\kk$-linear group action of $G$ on a vector space  $V$. Then $(G\lmod, \otimes_{\kk}, \kk)$ is a $\kk$-linear monoidal category. If $V,W$ are $G$-modules, then $V \otimes W$ is a $G$-module via the action $g \cdot (v \otimes w) = (g \cdot v) \otimes (g \cdot w)$ for $g \in G$, $v \in V$, and $w \in W$.
        
        \item When $\CC = \Lie$, we may take $\boxtimes_{\Lie}=\times$, the set-theoretic Cartesian product. In this case, if $\mf{g}$ is a Lie algebra, the action map $\ell_{\mf{g},V}: \mf{g} \times V \to V$ is a $\kk$-bilinear map such that $[x,y]\cdot v = x\cdot (y\cdot v)-y\cdot(x\cdot v)$, for all $x,y\in \mf{g}$ and $v\in V$. Then $(\mf{g}\lmod, \otimes_{\kk}, \kk)$ is a $\kk$-linear monoidal category. If $V, W$ are $\mf{g}$-modules, then $V \otimes W$ is a $\mf{g}$-module via the action $x \cdot (v \otimes w) = (x \cdot v) \otimes w + v \otimes (x \cdot w)$.
        
        \item When $\CC = \Hopf$, we may take $\boxtimes_{\Hopf}=\otimes_{\kk}$. In this case, if $H$ is a Hopf algebra, the action map $\ell_{H,V}: H \otimes V \to V$ is a linear map that makes $V$ a left module over the $\kk$-algebra $H$ (see \cite[Section~1.6]{Mo} for details). Then $(H\lmod, \otimes_{\kk}, \kk)$ is a $\kk$-linear monoidal category. If $V, W$ are $H$-modules, then $V \otimes W$ is an $H$-module via the action $h \cdot (v \otimes w) = (h_1 \cdot v) \otimes (h_2 \cdot w)$, where $\Delta(h)=h_1 \otimes h_2 $ is the sumless notation for comultiplications. As consequence, $\GrAlg$, $\EnvLie$ and $\CocomHopf$ are also categories of Hopf-like structures.
    \end{itemize}
    Later we will show that the more general categories $\XGrpd$, $\XLie$, $\XWHA$, $\XGrAlg$, $\XEnvLie$ and $\XCocomWHA$ satisfy the properties in Remark~\ref{def:catHopflike} (see Remarks \ref{GGboxtimes}, \ref{GLieboxtimes}, and \ref{Hmodboxtimes}). 
\end{example} 

\begin{definition}[Inner-faithful action]
\label{def:innerfaithful}
   Let $\CC$ be a category of Hopf-like structures with bifunctor $\boxtimes_{\CC}: \CC \times \Vec \to \Set$ (if $\CC$ is of type I) or bifunctor $\boxtimes_{\CC}: \CC \times \Vec \to \Vec$ (if $\CC$ is of type II). If $H$ is an object in $\CC$ and $V$ is an $H$-module via action map $\ell_{H,V}$, we say that $H$ acts \emph{inner-faithfully} on $V$ if there is no proper quotient $H/I$ of $H$ in $\CC$ such that $H/I$ acts on $V$ via action map $\ell_{H/I, V}$, and the following diagram commutes (in $\Set$ or in $\Vec$):
\[
\begin{tikzcd}
H\boxtimes_{\CC} V \arrow[rrd, "{\ell_{H,V}}"] \arrow[d, "\pi\boxtimes_{\CC} \id"'] & &   \\
H/I \boxtimes_{\CC} V \arrow[rr, "{\ell_{H/I, V}}"']                  & & V
\end{tikzcd}
\]
Here, $\pi: H \to H/I$ is the natural projection.
\end{definition}

\begin{definition}[Algebra object, $H$-module algebra]\label{def:cataction} Recall that an \emph{algebra object} $(A,m,u)$ in a monoidal category $(\DD,\otimes_{\DD}, \unit_{\DD})$ is an object $A$ in $\DD$ equipped with morphisms $m: A \otimes_{\DD} A \to A$ and $u: \unit_{\DD} \to A$ in $\DD$ satisfying associativity and unitality axioms (see \cite[Section~7.8]{EGNO} for details).

Let $H$ be an object in a category of Hopf-like structures $\CC$. We say that $A$ is a \emph{left $H$-module algebra} if $A$ is an algebra object in the monoidal category $H\lmod$.
 \end{definition}

\begin{remark}
\label{rem:Hmodulealgebra}
It is important to note that although the objects of $H\lmod$ are $\kk$-vector spaces, the monoidal product $\otimes_{H\lmod}$ may not coincide with the $\kk$-tensor product $\otimes_{\kk}$. Therefore, a monoid object $A$ in $H\lmod$ does not necessarily induce a $\kk$-algebra structure on the underlying $\kk$-vector space $A$. And conversely, for a $\kk$-algebra $A$ that also belongs to $H\lmod$, even if the action map is compatible with the algebra operations, it does not guarantee that $A$ is an algebra object in $\CC$.

However, in the case of the classical Hopf-like structures listed in Example~\ref{ex:hopflike}, where $\otimes_{H\lmod}=\otimes_{\kk}$, it is well known that the algebra objects $A$ in $H\lmod$ correspond bijectively to the $\kk$-algebra structures on $A$ compatible with the action (see, e.g., \cite[Section~7.8]{EGNO}). This is the reason for no distinction between the two notions in the traditional case.

Furthermore, in all of the non-classical categories $\CC$ from Notation~\ref{not:cats}, even though the monoidal product differs from the $\kk$-tensor product, algebra objects in $H\lmod$ do correspond bijectively to $\kk$-algebras with compatible actions (as shown in Lemma~\ref{lem:GGModuleFormulas} for $\CC = \XGrpd$, Lemma~\ref{lem:XLieModuleFormulas} for $\CC = \XLie$, and Proposition~\ref{def:modulealgoverWHA} for $\CC = \XWHA$ or any of its subcategories). Therefore, we will use the convention of referring to such $\kk$-algebras with compatible actions also as \emph{left $H$-module algebras}.
\end{remark}

The symmetry object $\Sym_{\CC}(A)$ of a category of Hopf-like structures $\CC$, introduced here, is designed to capture the symmetries of any $\kk$-algebra $A$ arising from the actions of objects in $\CC$.

\begin{definition}[$\Sym_{\CC}(A)$]\label{def:Sym}
Let $A$ be a $\kk$-algebra and let $\CC$ be a category of Hopf-like structures. If it exists, we denote by $\Sym_{\CC}(A)$ an object in $\CC$ such that:
 \begin{enumerate}[label=\normalfont(\alph*)]
 \item $A$ is a $\Sym_{\CC}(A)$-module algebra via action map $\ell_{\Sym_{\CC}(A), A}: \Sym_{\CC}(A) \boxtimes_{\CC} A \to A$. If $f \boxtimes a \in \Sym_{\CC}(A) \boxtimes_{\CC} A$, we write $f \rt a$ to denote $\ell_{\Sym_{\CC}(A), A}(f \boxtimes a)$.

\item For each object $H$ in $\CC$, if $A$ is an $H$-module algebra via $\ell_{H,A}: H \boxtimes_{\CC} A \to A$, then there exists a unique morphism $\phi: H \to \Sym_{\CC}(A)$ in $\CC$ such that the following diagram commutes:
\[
\begin{tikzcd}
H \boxtimes_{\CC} A \arrow[rrd, "{\ell_{H,A}}"] \arrow[d, dashed, "\phi \boxtimes_{\CC} \id"'] & &   \\
\Sym_{\CC}(A) \boxtimes_{\CC} A \arrow[rr, "{\ell_{\Sym_{\CC}(A), A}}"']                  & & A
\end{tikzcd}
\]
Conversely, every morphism $\phi: H \to \Sym_{\CC}(A)$ in $\CC$ gives $A$ the structure of an $H$-module algebra via $\ell_{H,A}(h \boxtimes a) = \phi(h) \rt a$.
 \end{enumerate}
We call $\Sym_{\CC}(A)$ the \emph{symmetry object} over $A$ in $\CC$.
\end{definition}

\begin{remark}
Let $\CC$ be a category of Hopf-like structures and let $H$ be an object in $\CC$.
\begin{enumerate}[label=\normalfont(\alph*)]
\item Note that, while the definition of an $H$-module algebra is ``category-theoretic'', if for a $\kk$-algebra $A$ the symmetry object $\Sym_{\CC}(A)$ exists, then it provides a ``representation-theoretic" framework for actions on $\kk$-algebras: $A$ is an $H$-module algebra if and only if there is a morphism $H \to \Sym_{\CC}(A)$ in $\CC$.

\item It is not obvious that the symmetry object $\Sym_{\CC}(A)$ always exists in $\CC$. For example, consider the category of abelian groups $\CC = \AbGrp$ (which satisfies the conditions of Remark~\ref{def:catHopflike}) and let $A = \kk[x]$ be the polynomial algebra. Let $G = \langle g \rangle$ be the cyclic group of order $2$. Then $G$ acts on $A$ in two different ways: (1) via the action $\cdot$ defined by $g \cdot x := -x$, and (2) via the action $*$ defined by $g * x := -x + 1$. If $\Sym_{\AbGrp}(A)$ existed, then by definition we would obtain abelian group morphisms $\phi : G \to \Sym_{\AbGrp}(A)$ and $\psi : G \to \Sym_{\AbGrp}(A)$ so that $\phi(g) \rt x = -x$ and $\psi(g) \rt x = -x + 1$. But then the elements $\phi(g)$ and $\psi(g)$ would not commute in $\Sym_{\AbGrp}(A)$ (since they have different actions on $x$), contradicting the fact that $\Sym_{\AbGrp}(A)$ is an abelian group.
\end{enumerate}
\end{remark}

Assuming the existence of the symmetry object, we show two of its basic properties.

\begin{lemma}\label{lem:Symmproperties}
    Let $A$ be a $\kk$-algebra and let $\CC$ be a category of Hopf-like structures. If $\Sym_{\CC}(A)$ exists, then:
    \begin{enumerate}[label=\normalfont(\alph*)]
        \item $\Sym_{\CC}(A)$ is unique up to isomorphism in $\CC$.
        \item $\Sym_{\CC}(A)$ acts inner-faithfully on $A$.
    \end{enumerate}
\end{lemma}

\begin{proof}
Let $S: = \Sym_{\CC}(A)$.

(a) If $T$ is another object in $\CC$ satisfying the two conditions in Definition~\ref{def:Sym}, then we obtain unique morphisms $\phi: S \to T$ and $\psi: T \to S$ such that the following diagram commutes:
\[
\begin{tikzcd}
S \boxtimes_{\CC} A \arrow[rrd, "{\ell_{S,A}}"] \arrow[d, dashed, "\phi \boxtimes_{\CC} \id"'] & &   \\
T \boxtimes_{\CC} A \arrow[rr, "{\ell_{T, A}}"']  \arrow[d, dashed, "\psi \boxtimes_{\CC} \id"']                 & & A \\
S \boxtimes_{\CC} A \arrow[rru, "{\ell_{S,A}}"'] & &  
\end{tikzcd}
\]
Hence, the composition $(\psi \boxtimes_{\CC} \id) \circ (\phi \boxtimes_{\CC} \id) = \id_S \boxtimes_{\CC} \id$, as this is the unique morphism $S \boxtimes_{\CC} A \to  S\boxtimes_{\CC} A$ making the outer diagram commute. Hence, $\phi$ and $\psi$ are isomorphisms.

(b) Suppose that $S/I$ is a quotient object of $S$ which acts on $A$ such that the following diagram commutes:
\[
\begin{tikzcd}
S \boxtimes_{\CC} A \arrow[rrd, "{\ell_{S, A}}"] \arrow[d, "\pi\boxtimes_{\CC} \id"'] & &   \\
S/I \boxtimes_{\CC} A \arrow[rr, "{\ell_{S/I, A}}"']                   & & A
\end{tikzcd}
\]
By definition of of $\Sym_{\CC}(A)$, we obtain a morphism $\phi: S/I \to S$ such that the following commutes:
\[
\begin{tikzcd}
S \boxtimes_{\CC} A \arrow[rrd, "{\ell_{S, A}}"] \arrow[d, "\pi\boxtimes_{\CC} \id"'] & &   \\
S/I \boxtimes_{\CC} A \arrow[rr, "{\ell_{S/I, A}}"']  \arrow[d, dashed, "\phi \boxtimes_{\CC} \id"']                   & & A \\
S \boxtimes_{\CC} A \arrow[rru, "{\ell_{S,A}}"'] & &  
\end{tikzcd}
\]
Hence, the composition $(\phi \boxtimes_{\CC} \id) \circ (\pi \boxtimes_{\CC} \id) = \id_S \boxtimes_{\CC} \id$, as this is the unique morphism $S \boxtimes_{\CC} A \to  S\boxtimes_{\CC} A$ making the outer diagram commute. But this shows that $I$ is trivial so $S/I = S$, as desired.
\end{proof}

In the rest of
this paper, we will demonstrate the existence of the symmetry object $\Sym_{\CC}(A)$ in the categories $\CC$ mentioned in Notation~\ref{not:cats} and describe the object in these cases. It is worth noting that all these categories consist of objects that are (closely related to) weak Hopf algebras, while the algebras $A$ for which we calculate the symmetry object are not necessarily connected. Therefore, weak Hopf algebras can be seen as capturing the symmetries of not-necessarily-connected $\kk$-algebras.

\section{Actions of groupoids on algebras} \label{sec:groupoid}

In this section, we first study modules over groupoids and representations of groupoids. We generalize the correspondence between these two notions, which is well-known in the group setting (see Lemma~\ref{lem:groupoidrep}). Then, for a not necessarily connected $\kk$-algebra $A$, we identify a groupoid $\Sym_{\XGrpd}(A) = \Aut_{\XAlg}(A)$ which generalizes the automorphism group of a connected $\kk$-algebra and captures the symmetries of $A$ in the category $\XGrpd$. We begin by introducing the definition of a groupoid, which will be used consistently throughout this paper.

\begin{definition}[{$\GG$, $\GG_0$, $\GG_1$, $e_x$}]\label{def:groupoid}
	A \emph{groupoid} $\GG=(\GG_0, \GG_1)$ is a small category in which every morphism is an isomorphism. Here, $\GG_0$ (resp., $\GG_1$) is the set of objects (resp., morphisms) of $\GG$. If $g\in \GG_1$, then $s(g)$ (resp., $t(g)$) denotes the source (resp., target) object of $g$. By convention, we compose elements of a groupoid from right to left. For each $x \in \GG_0$, the identity morphism of $\Hom_{\GG}(x,x)$ is denoted by $e_x$. A \emph{groupoid morphism} is simply a functor between groupoids.
\end{definition}

A group $G$ can be viewed as a groupoid with only one object. Unless otherwise stated, we assume that for a groupoid $\GG$, the set of objects $\GG_0$ is nonempty and finite. If $\GG_1$ is also a finite set, then we call $\GG$ \emph{finite}. Throughout we will use the following notation.

\begin{notation}[$X$]\label{not:X}
	Henceforth, let $X$ be a nonempty finite set.
\end{notation}

\begin{definition}[$\XGrpd$]\label{def:XGrpd}
 Let $\XGrpd$ be the category defined as follows:
	 
\begin{itemize}
\item The objects are groupoids $\GG$ such that $\GG_0=X$; we call these \emph{$X$-groupoids}.
		 
\item The morphisms are groupoid morphisms leaving $X$ fixed, i.e., functors $\pi: \GG \to \GG'$ such that $\pi(x)=x$ for all $x\in X$; we call these functors \emph{$X$-groupoid morphisms}.
	\end{itemize}
\end{definition}

\begin{remark}\label{rem:permutation}
    In Definition~\ref{def:XGrpd}, one could allow morphisms of $X$-groupoids to permute the set of objects $X$ rather than requiring them to fix the set. However, we chose the latter for convenience in our proofs. Moreover, in the former case, since $X$ is finite, a suitable relabeling of objects in either $X$-groupoid would reduce to our case. Therefore, all our results apply up to a permutation of $X$ by considering a more relaxed definition of $X$-groupoid morphisms.
\end{remark}

Throughout this section, $\GG$ denotes an $X$-groupoid.


\subsection{Modules over groupoids} \label{sec:modgroupoid}

Some of the concepts here are partially adapted from \cite[Definition~7.1.7]{BHS2011} and \cite[p.~85]{PT2014}.

\begin{definition}[$X$-decomposable vector space]\label{def:XdecompV}
	A vector space $V$ is \emph{$X$-decomposable} if there exists a family $\{V_x\}_{x\in X}$ of subspaces of $V$ such that $V=\bigoplus_{x\in X} V_x$. We call the $\{V_x\}_{x\in X}$ the \emph{components} of $V$.
 
 If $V=\bigoplus_{x\in X} V_x$ and $W=\bigoplus_{x\in X} W_x$ are $X$-decomposable vector spaces, a linear map $f: V \to W$ is said to be \emph{$X$-decomposable} if there is a family $\{ f_x : V_x \to W_x \}_{x\in X}$ of linear maps such that $f|_{V_x} = f_x$ for all $x \in X$. In this case, we write $f = (f_x)_{x \in X}$.
\end{definition}

Every vector space $V$ has a trivial $X$-decomposition if $|X|=1$. Even if $|X| > 1$, $V$ has an $X$-decomposition where $V_x = V$ for one $x \in X$, and $V_y = 0$  for $y\in X, y\not=x$.

\begin{definition}[$\GG$-module]\label{def:groupoidmod}
An $X$-decomposable vector space $V=\bigoplus_{x\in X} V_x$ is said to be a \emph{left $\GG$-module} if it is equipped with, for each $x,y\in X$, a linear map $\Hom_{\GG}(x,y) \times V_x \to V_y$, denoted $(g,v) \mapsto g \cdot v$, such that
	\begin{itemize}
		\item $(gh) \cdot v = g \cdot (h \cdot v)$, for all $g,h\in \GG_1$ with $t(h)=s(g)$ and all $v\in V_{s(h)}$, and
		 
		\item $e_x \cdot v=v$, for all $x\in X$ and $v \in V_x$.
	\end{itemize}

Given two left $\GG$-modules $V=\bigoplus_{x\in X} V_x$ and $W=\bigoplus_{x\in X} W_x$, a \emph{$\GG$-module morphism} $f = (f_x)_{x \in X}: V \to W$ is an $X$-decomposable linear map 
 such that
\begin{equation}\label{eq:mormodgroupoid}
g \cdot f_{s(g)}(v)= f_{t(g)}(g\cdot v), \qquad \text{for all $g\in \GG_1$ and $v\in V_{s(g)}$}.
\end{equation}
For two $\GG$-module morphisms $f=(f_x)_{x \in X} : V \to W$ and $f'= (f'_x)_{x \in X} : W \to Z$, their {\it composition} is defined as the $\GG$-module morphism $f'f= (f'_x f_x)_{x \in X} : V \to Z$.
\end{definition}

\begin{remark}\label{rem:groupoidmod}
	Let $V=\bigoplus_{x\in X} V_x$	be a left $\GG$-module. Notice that the action is defined locally,  meaning that if $g \in \GG_1$, then $g \cdot v$ is well-defined only when $v\in V_{s(g)}$. For this reason, in the literature, the linear maps mentioned in Definition~\ref{def:groupoidmod} are called a \emph{partial groupoid action} (see e.g., \cite[Section~1]{BP2012}). Also, when $|X| = 1$ (that is, when $\GG$ is a group), Definition~\ref{def:groupoidmod} recovers the classical notion of a module over a group.
\end{remark}

The following result is an adapted version of \cite[Proposition~9.3]{IR19}.

\begin{lemma}
\label{lem.Gmodulegroupoid}
	Let $V=\bigoplus_{x\in X} V_x$	be a left $\GG$-module. Then, for each $g\in \GG_1$, the linear map $\nu_g : V_{s(g)} \to V_{t(g)}$ given by $v \mapsto g \cdot v$ is an isomorphism. In particular, $( \{V_x\}_{x\in X}, \{ \nu_g \}_{g\in \GG_1} ) $ is a groupoid.
\end{lemma}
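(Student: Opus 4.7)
The plan is to produce an explicit two-sided inverse for each $\nu_g$ and then verify the groupoid axioms by translating them into the module axioms of Definition~\ref{def:groupoidmod}. Since the statement is essentially a "transport of structure" result, nothing deep is needed; the work is to make sure source/target constraints line up correctly at every step.

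First I would observe that $\nu_g : V_{s(g)} \to V_{t(g)}$, $v \mapsto g \cdot v$, is well-defined and linear: the action $\Hom_\GG(s(g),t(g)) \times V_{s(g)} \to V_{t(g)}$ is linear in the second slot by the definition of a $\GG$-module, and the target $V_{t(g)}$ is correct because the action lands in the component indexed by $t(g)$. I would then propose $\nu_{g^{-1}} : V_{t(g)} \to V_{s(g)}$ as the candidate inverse, which is legal because $s(g^{-1}) = t(g)$ and $t(g^{-1}) = s(g)$. Applying the first bullet of Definition~\ref{def:groupoidmod}, for $v \in V_{s(g)}$,
\[
\nu_{g^{-1}}(\nu_g(v)) \;=\; g^{-1} \cdot (g \cdot v) \;=\; (g^{-1}g) \cdot v \;=\; e_{s(g)} \cdot v \;=\; v,
\]
using the identity axiom at the last step; the reverse composition is symmetric. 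Hence $\nu_g$ is an isomorphism with $\nu_g^{-1} = \nu_{g^{-1}}$.

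For the second assertion, I would exhibit the data $(\{V_x\}_{x\in\GG_0},\{\nu_g\}_{g\in\GG_1})$ as a small category and verify the groupoid axioms directly. Composition: whenever $g,h \in \GG_1$ satisfy $t(h) = s(g)$, the first bullet of Definition~\ref{def:groupoidmod} gives $\nu_g \circ \nu_h = \nu_{gh}$ as maps $V_{s(h)} \to V_{t(g)}$. Identities: the second bullet gives $\nu_{e_x} = \mathrm{id}_{V_x}$. Associativity of composition of the $\nu_g$ is inherited from associativity in $\GG_1$. Finally, invertibility of every morphism was established in the previous paragraph. Together these verifications show that $(\{V_x\}_{x\in\GG_0},\{\nu_g\}_{g\in\GG_1})$ is a groupoid.

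No step presents a genuine obstacle; the only thing to take care with is bookkeeping, namely that every composition $\nu_g \circ \nu_h$ or $\nu_{g^{-1}} \circ \nu_g$ I write down has matching source and target components so that the $\GG$-module action is actually applicable. In particular, the appeal to $e_{s(g)} \cdot v = v$ requires $v \in V_{s(g)}$, which is exactly the domain of $\nu_g$, so the verification goes through without issue.
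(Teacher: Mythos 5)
Your proposal is correct and follows essentially the same route as the paper: exhibit $\nu_{g^{-1}}$ as a two-sided inverse of $\nu_g$ using the two axioms of Definition~\ref{def:groupoidmod}, and note that composition and identities transport to the maps $\nu_g$. The paper's proof is simply terser, leaving the final groupoid-axiom verification implicit, whereas you spell it out.
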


\begin{proof}
	If $g: x\to y$ is a morphism in $\GG_1$, then $\nu_{g}^{-1}=\nu_{g^{-1}}$. Indeed, for every $v\in V_x$ we have $\nu_{g^{-1}}\nu_g(v)=g^{-1} \cdot (g \cdot v) = (g^{-1}g) \cdot v=e_x \cdot v = v$. Similarly, for any $v'\in V_y$ we have $\nu_g \nu_{g^{-1}} (v') = e_y \cdot v' = v' $. In particular, $\nu_{e_x}=\id_{V_x}$, for every $x \in X$.
\end{proof}

\begin{notation}[$\nu_g, \omega_g, \alpha_g$]
\label{not.struciso}
The maps $\{\nu_g\}_{g\in \GG_1}$ above are called the \emph{structure isomorphisms} of the $\GG$-module $V=\bigoplus_{x\in X} V_x$. In the  remainder of this section we will use the Greek letters $\{\nu_g\}_{g\in\GG_1}$, $\{\omega_g\}_{g\in\GG_1}$, $\{\alpha_g\}_{g\in\GG_1}$, etc., to denote the  structure isomorphisms of $\GG$-modules $V=\bigoplus_{x\in X} V_x$,  $W=\bigoplus_{x\in X} W_x$,  $A=\bigoplus_{x\in X} A_x$, respectively.
\end{notation}

For instance, if $f = (f_x)_{x \in X}: V \to W$ is a $\GG$-morphism, then condition~\eqref{eq:mormodgroupoid} can be restated as $\omega_g f_{s(g)} = f_{t(g)} \nu_g$, for all $g\in \GG_1$. The category of left $\GG$-modules can be endowed with a monoidal structure. The proof of the following result is straightforward.

\begin{lemma}[$\GGmod$]\label{lem:groupoid-mod}
	Let $\GGmod$ be the category of left $\GG$-modules. This category admits a monoidal structure as follows. 
	\begin{itemize}
		\item If $V=\bigoplus_{x\in X} V_x, W=\bigoplus_{x\in X} W_x \in \GGmod$, then $V \otimes_{\GGmod} W = \bigoplus_{x\in X} V_x \otimes W_x$, and the structure isomorphisms are $\{\nu_g \otimes \omega_g\}_{g\in\GG_1}$, 
		 
	\item $\unit_{\GGmod} = \bigoplus_{x\in X} \kk$ with the structure isomorphisms $\{\kappa_g = \id_{\kk}\}_{g\in\GG_1}$.
\end{itemize}
If $f = (f_x)_{x\in X}:V \to W$ and $f' = (f'_x)_{x\in X}: V' \to W'$ are two $\GG$-module morphisms, then $f \otimes_{\GGmod} f' = (f_x \otimes f'_x)_{x\in X}$. The associativity constraint is that induced by the tensor product of components, and the left/right unital constraints
	\begin{gather*}
		l_{V} : \unit_{\GGmod} \otimes_{\GGmod} V \longrightarrow V, \quad \quad \quad \quad
		r_{V}: V \otimes_{\GGmod} \unit_{\GGmod}\longrightarrow V,
	\end{gather*}
	 are given by scalar multiplication, that is, for each $x\in X$,
\[
(l_{V})_x : \kk \otimes V_x \rightarrow V_x, \; \; k \otimes v \mapsto k v \quad \quad (r_{V})_x : V_x \otimes \kk \rightarrow V_x, \; \; v \otimes k \mapsto k v.
\]
\end{lemma}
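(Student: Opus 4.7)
The plan is to reduce every verification to the corresponding fact in $\Vect$, applied componentwise. Write $V = \bigoplus_{x \in \GG_0} V_x$ and $W = \bigoplus_{x \in \GG_0} W_x$ with structure isomorphisms $\{\nu_g\}$ and $\{\omega_g\}$ respectively. First I would check that $V \otimes_{\GGmod} W$ is genuinely a $\GG$-module under the proposed structure maps $\{\nu_g \otimes \omega_g\}_{g \in \GG_1}$: the composition law $(gh)\cdot(v\otimes w) = g\cdot(h\cdot(v\otimes w))$ reduces to $\nu_{gh}\otimes \omega_{gh} = (\nu_g \otimes \omega_g)(\nu_h \otimes \omega_h)$, and the identity law reduces to $\nu_{e_x}\otimes \omega_{e_x} = \id_{V_x}\otimes \id_{W_x} = \id_{V_x \otimes W_x}$, both of which are immediate from the $\GG$-module axioms on $V$ and $W$. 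Similarly, $\unit_{\GGmod} = \bigoplus_{x \in \GG_0}\kk$ with $\kappa_g = \id_\kk$ is a $\GG$-module because $\id \circ \id = \id$.

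Next I would verify bifunctoriality of $\otimes_{\GGmod}$. For $\GG$-module morphisms $f = (f_x)_{x\in\GG_0}: V \to W$ and $f' = (f'_x)_{x\in\GG_0}: V' \to W'$, the family $(f_x \otimes f'_x)_{x\in\GG_0}$ satisfies the intertwining condition \eqref{eq:mormodgroupoid} for $V\otimes_{\GGmod}V' \to W\otimes_{\GGmod}W'$ because
\[
(\omega_g \otimes \omega'_g)(f_{s(g)}\otimes f'_{s(g)}) = (\omega_g f_{s(g)})\otimes(\omega'_g f'_{s(g)}) = (f_{t(g)}\nu_g)\otimes(f'_{t(g)}\nu'_g) = (f_{t(g)}\otimes f'_{t(g)})(\nu_g \otimes \nu'_g),
\]
using \eqref{eq:mormodgroupoid} separately for $f$ and $f'$. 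Preservation of identities and composition of morphisms under $\otimes_{\GGmod}$ reduces componentwise to the bifunctoriality of $\otimes$ on $\Vect$.

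It then remains to produce the associator $a_{V,W,Z}$ and unitors $l_V, r_V$ and to check naturality, the pentagon, and the triangle. I would take $a_{V,W,Z} = (a_{V_x,W_x,Z_x})_{x\in\GG_0}$ and similarly for $l_V, r_V$, where the right-hand sides are the standard $\Vect$ constraints in each component. These are $\GG$-module morphisms because the structure isomorphisms of the iterated tensor products are themselves built componentwise by tensoring $\nu_g, \omega_g, \xi_g$ in the two possible associations, and the vector space associator intertwines them. Naturality of $a, l, r$ in $\GGmod$ is naturality of the corresponding $\Vect$ constraints applied componentwise. The pentagon and triangle axioms likewise hold in each component of $\GG_0$, so they hold in $\GGmod$.

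There is no genuine obstacle here; the proof is entirely a matter of observing that the proposed monoidal structure on $\GGmod$ is the $\GG_0$-indexed ``pointwise'' monoidal structure on $\Vect^{\GG_0}$, decorated by the structure isomorphisms, and that tensor products of intertwiners are again intertwiners. The only thing worth spelling out with care is the compatibility of the constraints with the structure isomorphisms, which I would record explicitly to make clear why $a, l, r$ land in $\GGmod$ rather than merely in $\Vect^{\GG_0}$.
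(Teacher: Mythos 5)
Your proof is correct; the paper states this lemma as ``straight-forward'' and omits the argument entirely, and your componentwise reduction to the standard monoidal structure on $\Vect$ (together with the check that the structure isomorphisms are respected by the tensor of intertwiners and by the constraints) is precisely the routine verification being left to the reader. Nothing is missing.
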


For a group $G$, Lemma~\ref{lem:groupoid-mod} implies the well-known result that $G\lmod$ is a monoidal category under the usual tensor product $\otimes_{\kk}$.

\begin{remark}
\label{GGboxtimes}
For an $X$-groupoid $\GG$ and an $X$-decomposable vector space $V = \bigoplus_{x \in X} V_x$, in the language of Remark~\ref{def:catHopflike}, we define
\[
\GG \boxtimes_{\XGrpd} V = \{(f,v) \in \GG \times V \mid f \in \GG, v \in V_{s(f)}\}.
\]
Then a left action of $\GG$ on $V$ can be viewed as a map $\GG \boxtimes_{\XGrpd} V \to V$. By the above lemma, $(\GG\lmod, \otimes_{\GG\lmod}, \kk^{|X|})$ is a monoidal category.

Recall that a \emph{subgroupoid} is a subcategory closed under taking inverses. Since the morphisms in $\XGrpd$ fix the set $X$, the subobjects of $\GG$ in $\XGrpd$ are all \emph{wide} subgroupoids $\mathcal{H}$ (that is, $\mathcal{H}_0 = X$). A wide subgroupoid is called \emph{normal} if $ghg\inv \in \mathcal{H}$ for all $h \in \mathcal{H}_1$ and all $g \in \GG_1$ such that $s(g) = t(h)$. If $\mathcal{H}$ is a normal subgroupoid, then the set of cosets of $\mathcal{H}$ in $\mathcal{G}$ forms a quotient groupoid $\mathcal{G}/\mathcal{H} \in \XGrpd$.
\end{remark}

Now we provide specific examples of modules over a groupoid.

\begin{example}\label{exa:groupoid}
	Consider the following groupoid:
	\begin{equation}\label{eq:GG}
	\GG =	\begin{tikzcd}
			\bullet_x \arrow[r,bend left,"g"] \arrow["e_x"', loop, distance=2em, in=215, out=145] & \bullet_y \arrow[l,bend left,"g^{-1}"] \arrow["e_y"', loop, distance=2em, in=35, out=325]
		\end{tikzcd} 
	\end{equation}
\begin{enumerate}[label=\normalfont(\alph*)]
	\item\label{it:exgroupoid1} Let $V_x=V_y=\kk^2$ and define for all $a,b\in\kk$,
	\begin{gather*}
		g \cdot (a,b)=g^{-1} \cdot (a,b) := (b,a), \quad \quad	e_x \cdot (a,b) = e_y \cdot (a,b) := (a,b).
	\end{gather*}
Then $V=V_x \oplus V_y$ is a left $\GG$-module; $\nu_g,\nu_{g^{-1}}: \kk^2 \to \kk^2$ are given by $(a,b) \mapsto (b,a)$.

\item\label{it:exgroupoid2} Let $W_x=W_y=\kk^3$ and define for all $a,b,c\in\kk$,
\begin{gather*}
\quad \quad	\quad g \cdot (a,b,c)=g^{-1} \cdot (a,b,c) := (-a,c,b),
	\quad \quad e_x \cdot (a,b,c) = e_y \cdot (a,b,c) := (a,b,c).
\end{gather*}
Then $W=W_x \oplus W_y$ is a left $\GG$-module. Also, $\omega_g,\omega_{g^{-1}}: \kk^3 \to \kk^3$ are both given by $(a,b,c)\mapsto (-a,c,b)$. Furthermore, the maps $f_x,f_y:\kk^2 \to \kk^3$ both defined as $(a,b)\mapsto(0,a,b)$ make $f= (f_x, f_y)$ a $\GG$-module morphism from $V$ to $W$.

\item\label{it:exgroupoid3} Let $Z_x=Z_y=\kk[t,t^{-1}]$ and define for every $r\in \kk[t,t^{-1}]$,
\begin{gather*}
	g \cdot r:= tr, \qquad g^{-1} \cdot r := t^{-1}r, \qquad	e_x \cdot r = e_y \cdot r := r.
\end{gather*}
Then $Z=Z_x \oplus Z_y$ is an infinite-dimensional left $\GG$-module. The structure isomorphisms $\xi_g,\xi_{g^{-1}}: \kk[t,t^{-1}] \to \kk[t,t^{-1}]$ are given by $\xi_g(r)=tr$ and $\xi_{g^{-1}}(r)=t^{-1}r$, for every $r\in\kk[t,t^{-1}]$.

\item\label{it:exgroupoid4} Let $A_x = A_y = \kk[t, t\inv]$ and let $\sigma$ be the automorphism of $\kk[t,t\inv]$ define by $\sigma(t) = t\inv$. For $r \in \kk[t,t\inv]$, define
\[
g \cdot r = \sigma(r) = g\inv \cdot r, \quad e_x \cdot r = e_y \cdot r = r. 
\]
Then $A = A_x \oplus A_y$ is an infinite-dimensional left $\GG$-module with structure isomorphisms $\alpha_{g} = \alpha_{g\inv} = \sigma$.
\end{enumerate}
\end{example}


\subsection{Representations of groupoids} \label{sec:repgroupoid}

In this subsection we focus our study on representations of groupoids by adapting terminology present in \cite[Section~9.3]{IR19}, \cite[Section~3.1]{IRPaper19} and \cite[Section~2.3]{PF13}. Recall that $\GG$ denotes an arbitrary $X$-groupoid. First, we introduce a generalization of the general linear group, $\GL(V)$, over a vector space $V$.

\begin{definition}[$\GL_X(V), \GL_{(d_1,\dots,d_n)}(\kk)$]\label{def:autgroupoid1}
	Let $V=\bigoplus_{x\in X} V_x$ be an $X$-decomposable vector space. We define the $X$-groupoid $\GL_X(V)$, which we call the \emph{$X$-general linear groupoid} of $V$,	as follows: 
		\begin{itemize}
			\item the object set is $X$,
 
			\item for any $x,y\in X$, 
 $\Hom_{\GL_X(V)}(x,y)$ is the space of linear isomorphisms between the vector spaces $V_x$ and $V_y$.
		\end{itemize}
If $X = \{1, \dots, n\}$ and $V_i$ has dimension $d_i$, then we also denote $\GL_X(V)$ by $\GL_{(d_1, \dots, d_n)}(\kk)$ for $d_1 \leq d_2 \leq \dots \leq d_n$.
\end{definition}

This generalizes the classical notation $\GL_d(\kk) = \GL(V)$ when $V$ has dimension $d$. Note that, in general, this $X$-groupoid is not finite (but always has finitely many objects).

\begin{example}\label{exa:xdecomp1a}
	If $X=\{x,y\}$, then $\kk^4$ is $X$-decomposable by taking $(\kk^4)_x:=(\kk,\kk,0,0) $ and $(\kk^4)_y:=(0,0,\kk,\kk)$. Moreover, we have the following:
	\begin{equation*}
		 \GL_X(\kk^4) = \GL_{(2,2)}(\kk) =	\begin{tikzcd}
			x
 \arrow[r,bend left,dashed, rightarrow] \arrow[dashed, rightarrow, loop, distance=2em, in=215, out=145] & y \arrow[l,bend left,dashed, rightarrow] \arrow[dashed, rightarrow, loop, distance=2em, in=35, out=325]
		\end{tikzcd} \cong \begin{tikzcd}
			\kk^2 \arrow[r,bend left,dashed, rightarrow] \arrow[dashed, rightarrow, loop, distance=2em, in=215, out=145] & \kk^2\arrow[l,bend left,dashed, rightarrow] \arrow[dashed, rightarrow, loop, distance=2em, in=35, out=325]
		\end{tikzcd}
	\end{equation*}
	Here, the dashed arrows can be identified with the spaces 
 $\GL_2(\kk)$ of linear isomorphisms.
\end{example}

\begin{definition}[Representation of $\GG$]\label{def:repgroupoid}
A \emph{representation of $\GG$} is an $X$-decomposable vector space $V=\bigoplus_{x\in X} V_x$ equipped with a $X$-groupoid morphism $\pi: \GG \to \GL_{X}(V)$, called the \emph{representation map} of $V$. We often denote this as $(V,\pi)$. Given two $\GG$-representations $(V,\pi)$, $(W,\eta)$, a \emph{morphism of $\GG$-representations} is a $\kk$-linear natural transformation $\varphi:\pi \Rightarrow \eta$, that is, a family of linear maps $\varphi=\{ \varphi_x: V_x \to W_x \}_{x\in X}$ such that $\varphi_{t(g)}\pi(g)=\eta(g)\varphi_{s(g)}$ for every $g\in \GG_1$.
\end{definition}

The $\GG$-representations, together with their morphisms, form a category that possesses a monoidal structure.

\begin{lemma}[$\GGrep$]\label{lem:groupoid-rep}
    Let $\GGrep$ be the category of $\GG$-represen\-tations. This category admits a monoidal structure as follows. 
	\begin{itemize}
	\item If $(V,\pi),(W,\eta) \in \GGrep$, then $V \otimes_{\GGrep} W = \bigoplus_{x\in X} V_x \otimes W_x$; the representation map $\GG \to \GL_X(V \otimes_{\GGrep} W)$ is given by $g \mapsto \pi(g) \otimes \eta(g)$, for all $g\in \GG_1$,
	\item $\unit_{\GGrep} = \bigoplus_{x\in X} \kk$; the representation map $\GG \to \GL_X(\unit_{\GGrep})$ is given by $g \mapsto \id_{\kk}$, for all $g\in \GG_1$.
\end{itemize}
If $\varphi=\{ \varphi_x: V_x \to W_x \}_{x\in X}$ and $\varphi'=\{ \varphi'_x: V'_x \to W'_x \}_{x\in X}$ are morphisms of $\GG$-representations, then $\varphi \otimes_{\GGrep} \varphi' = (\varphi_x \otimes \varphi'_x)_{x\in X}$. The associativity constraint is that induced by the tensor product of components, and the left/right unital constraints
	\begin{gather*}
		l_{V} : \unit_{\GGrep} \otimes_{\GGrep} V \longrightarrow V, \quad \quad \quad \quad
		r_{V}: V \otimes_{\GGrep} \unit_{\GGrep}\longrightarrow V,
	\end{gather*}
	 are given by scalar multiplication, that is, for each $x\in X$,
\[
(l_{V})_x : \kk \otimes V_x \rightarrow V_x, \; \; k \otimes v \mapsto k v \quad \quad (r_{V})_x : V_x \otimes \kk \rightarrow V_x, \; \; v \otimes k \mapsto k v.
\]
\end{lemma}

\begin{remark}
In \cite[Definition~2]{IRPaper19}, a $\GG$-representation is defined as a functor from $\GG$ to $\Vec$; our definition is a repackaging of the information carried by such a functor. One benefit of this repackaging is that in the case that $|X| = 1$ (that is, when $\GG$ is a group), Definition~\ref{def:repgroupoid} recovers the classical notion of a group representation, i.e., a vector space $V$ equipped with a group morphism $\pi: G \to \GL(V)$.
\end{remark}

\begin{example}\label{exa:xdecomp1rep}
Let $\GG$ be as in \eqref{eq:GG}. Then the $\{x,y\}$-decomposable vector space $\kk^4$ of Example~\ref{exa:xdecomp1a} is a representation of $\GG$ by taking $\pi: \GG \to \GL_X(\kk^4)= \GL_{(2,2)}(\kk)$ as
\begin{align*}
	\pi(g): (\kk,\kk,0,0) & \longrightarrow (0,0,\kk,\kk) & \pi(g^{-1}) : (0,0,\kk,\kk) & \longrightarrow (\kk,\kk,0,0) \\
	(a,b,0,0) &\mapsto (0,0,b,a), & (0,0,a,b) & \mapsto (b,a,0,0).
\end{align*}
\end{example}

We aim to establish a correspondence between modules over groupoids and representations of groupoids. To achieve this, recall that a functor $F:\DD\to \DD'$ between two monoidal categories is called \emph{strong monoidal} if it is equipped with a natural isomorphism $F_{X, Y}:F(X)\otimes_{\DD'} F(Y)\to F(X\otimes_{\DD}Y)$, and an isomorphism $F_0:\unit_{\DD'}\to F(\unit_{\DD})$ in $\DD'$, such that associative and unital constraints are satisfied (see e.g. \cite[Definition~2.4.1]{EGNO} or \cite[Definition~3.3]{WWW} for details). Moreover, two monoidal categories are said to be \textit{monoidally isomorphic} if there exists a strong monoidal functor between them that also is an isomorphism of ordinary categories.

The following result reconciles various notions of (linear) groupoid actions in the literature and reduces to a well-known result for groups when $\GG$ has one object; see, e.g. \cite[Section~7.1.ii]{BHS2011}, \cite[Section~1]{BP2012}, \cite[Section~9.3]{IR19}, \cite[Section~3.1]{IRPaper19}, \cite[Section~2.3]{PF13}, or \cite[Section~3]{PT2014}.

\begin{lemma} \label{lem:groupoidrep}
The categories $\GGmod$ and $\GGrep$ are monoidally isomorphic.
\end{lemma}

\begin{proof}
    Consider the functor $F:\GGmod \to \GGrep$ that sends a left $\GG$-module $V=\bigoplus_{x\in X} V_x$, equipped with structure isomorphisms $\{\nu_{g}\}_{g\in\GG_1}$, to the representation $V=\bigoplus_{x\in X} V_x$ with representation map $\pi: \GG \to \GL_{X}(V)$ defined as $\pi(g):=\nu_g$ for all $g\in \GG_1$. Here, $\pi(x) = x$ holds for all $x \in X$ by Definition~\ref{def:repgroupoid}. Moreover, if $f = (f_x)_{x\in X}:V \to W$ is a $\GG$-module morphism, then $F(f)=f$. Clearly, $F$ is an isomorphism of categories.

    For any two left $\GG$-modules $V,W$ define $F_{V, W}:F(V)\otimes_{\GGrep} F(W)\to F(V\otimes_{\GGmod}W)$ as $(F_{V, W})_x = \id_{V_x\otimes W_x}$ for all $x\in X$. The collection $\{F_{V,W}\}_{V,W\in\GGmod}$ forms a natural isomorphism. Additionally, consider the morphism $F_0:\unit_{\GGrep}\to F(\unit_{\GGmod})$ in $\GGrep$ given by $(F_0)_x=\id_{\kk}: \kk \to \kk$ for all $x\in X$, which is invertible. Through straightforward verification, it can be checked that $F$ satisfies the associativity and unit constraints, proving it is a strong monoidal functor.
\end{proof}

For a group $G$, this results recovers the classical monoidal isomorphism between $G\lmod$ and $\rep(G)$, given by $V \mapsto V$ and $g \cdot v = \pi(g)(v)$, for all $g \in G$ and $v \in V$.

\begin{example}
The left $\GG$-module $\kk^2 \oplus \kk^2$ of Example~\ref{exa:groupoid}(a) corresponds to the $\GG$-representation $\kk^4 \cong \kk^2 \oplus \kk^2$ of Example~\ref{exa:xdecomp1rep} through the correspondence provided by Lemma~\ref{lem:groupoidrep}.
\end{example}


\subsection{Module algebras over groupoids} \label{sec:modalggroupoid}

Next, we study module algebras over groupoids.

\begin{definition}[$X$-decomposable algebra] \label{def:XdecompA}
 Let $A$ be a $\kk$-algebra. We say that $A$ is an \emph{$X$-decomposable $\kk$-algebra} if there exists a family $\{A_x\}_{x\in X}$ of unital $\kk$-algebras (some of which may be $0$) such that $A=\bigoplus_{x\in X} A_x$ as $\kk$-algebras.
 
If $A=\bigoplus_{x\in X} A_x$ and $B=\bigoplus_{x\in X} B_x$ are $X$-decomposable $\kk$-algebras, a $\kk$-linear map $f: A \to B$ is said to be a \emph{morphism of $X$-decomposable algebras} if there is a family $\{ f_x : A_x \to B_x \}_{x\in X}$ of $\kk$-algebra maps such that $f|_{A_x} = f_x$ for all $x \in X$. In this case, we write $f = (f_x)_{x \in X}$.
\end{definition}

In other words, $X$-decomposable $\kk$-algebras are simply direct sums of $|X|$ unital $\kk$-algebras with the canonical (unital) $\kk$-algebra structure. This is a stronger condition than the $\kk$-algebra $A$ being an $X$-decomposable vector space, in the sense of Definition~\ref{def:XdecompV}, since we also require that the decomposition respects the $\kk$-algebra structure of $A$.

\begin{remark}\label{rem:localid}
Let $A=\bigoplus_{x\in X} A_x$ be an $X$-decomposable $\kk$-algebra.
\begin{enumerate}[label=\normalfont(\alph*)]
    \item If for each $x\in X$ we denote the multiplicative identity element of $A_x$ by $1_x:=1_{A_x}$, then $1_A=\sum_{x\in X} 1_x$. We refer to the elements of the set $\{1_x \mid x \in X \text{ and $1_x \neq 0$}\}$ as the {\it local identities} of $A$. The local identities form a complete set of orthogonal central idempotents of $A$.
    \item A morphism of $X$-decomposable algebras $ f: A \to B $ is simply a $\kk$-algebra homomorphism that respects the components, meaning $ f(A_x) \subseteq B_x $ for each $ x \in X $. This also implies that $ f $ preserves the local identities, i.e., $ f(1_{A_x}) = 1_{B_x} $ for all $ x \in X $.
\end{enumerate}
\end{remark}

However, the local identities could be a sum of nonzero central orthogonal idempotents (i.e., non-primitive), as the next example shows. Thus, an $X$-decomposition of a $\kk$-algebra is not unique.

\begin{example}\label{exa:xdecomp1b}
	The $\{x,y\}$-decomposable vector space $\kk^4$ of Example~\ref{exa:xdecomp1a} is, in fact, a $\{x,y\}$-decomposable $\kk$-algebra when considering component-wise operations. Here, the local identities are $1_x=(1,1,0,0)$ and $1_y=(0,0,1,1)$, which are not primitive.
\end{example}

We point out an example of an $X$-decomposable $\kk$-algebra which is used in \cite{CF2009} for their classification of weak Hopf algebra structures derived from $U_q(sl_2)$. 

\begin{example}\label{exa:xdecomp3}
Take $q\in \kk \backslash \{ 0, \pm 1 \}$. We follow \cite{CF2009} for the following construction. The \emph{weak quantum group $A:=wsl_q2$} is the unital $\kk$-algebra generated by indeterminates $E,F,K,\bar{K}$ subject to the following relations:
\begin{gather*}
	K E = q^2E K, \qquad \bar{K} E =q^{-2} E \bar{K}, \qquad K F =q^{-2} F K, \qquad \bar{K} F = q^2 F \bar{K},\\
	K\bar{K} = \bar{K} K, \qquad K \bar{K} K =K, \qquad \bar{K} K \bar{K} = \bar{K}, \qquad E F - F E = (K -\bar{K})/(q-q^{-1}).
		\end{gather*}
	For $X=\{x,y\}$, let $A_x=A 1_x$, $A_y=A 1_y$, with $1_x=K \bar{K}$, and $1_y=1_A-K \bar{K}$. Thus, $A = A_x \oplus A_y$ is an $X$-decomposable $\kk$-algebra; in fact, $A_x \cong U_q(\mf{sl}_2)$ and $A_y \cong \kk[t_1,t_2]$ as $\kk$-algebras \cite[Theorems~2.3, 2.5, 2.7]{CF2009}.
\end{example}

Now, we define two remarkable $X$-decomposable linear maps that arise when an $X$-decomposable $\kk$-algebra has the structure of module over an $X$-groupoid $\GG$.

\begin{remark}\label{rem:GGModuleMaps}
If $A=\bigoplus_{x\in X} A_x$ is an $X$-decomposable $\kk$-algebra, the multiplication map $m_A: A \otimes A \to A$ and unit map $u_A : \kk \to A$ immediately decompose into the respective multiplication map $m_x: A_x \otimes A_x \to A_x$ and unit map $u_x: \kk \to A_x$ of each $\kk$-algebra $A_x$, for all $x\in X$. If additionally $A$ is a $\GG$-module then, using the notation of Lemma~\ref{lem:groupoid-mod}, $m_A$ and $u_A$ induce $X$-decomposable linear maps $\underline{m}_A:=(m_x)_{x\in X} : A \otimes_{\GGmod} A \to A$ and $\underline{u}_A:=(u_x)_{x\in X} : \unit_{\GGmod} \to A$, which we call the \emph{monoidal multiplication} and \emph{monoidal unit} of $A$, respectively. It is clear that these maps satisfy associativity and unital condition. However, in general, these maps are not necessarily $\GG$-module morphisms.
\end{remark} 

By definition, it follows immediately that the monoidal multiplication and monoidal unit maps are $\GG$-module morphisms precisely when $A$ is an algebra in $\GG\lmod$ (i.e., $A$ is a $\GG$-module algebra).
Conversely, if $A$ is $\GG$-module algebra, then it comes equipped with maps in $\GGmod$, $\underline{m}_A = (m_x)_{x \in X} A \otimes_{\GGmod} A \to A$ and $\underline{u}_A = (u_x)_{x \in X}: \unit_{\GGmod} \to A$. These morphisms extend naturally to linear maps $m_A: A \otimes A \to A$ (where if $a \in A_x$ and $b \in A_y$ for $x \neq y$, we define $m_A(a \otimes b) = 0$) and $u_A : \kk \to A$ (defined by $\kk\to \unit_{\GGmod} \to A$ where $\kk \to \unit_{\GGmod}$ maps $1_{\kk}$ to $(1,1, \ldots, 1)$). Hence, we have proved the following result.

\begin{lemma}\label{lem:GGModuleFormulas}
Let $A=\bigoplus_{x\in X} A_x$ be an $X$-decomposable $\kk$-algebra and let $\GG$ be an $X$-groupoid.
 Then the following statements are equivalent.
	\begin{enumerate}[label=\normalfont(\alph*)]
		\item $A$ is a $\GG$-module algebra via the monoidal product $\underline{m}_A: A \otimes_{\GGmod} A \to A$ and monoidal unit $\underline{u}_A: \unit_{\GGmod} \to A$ of Remark~\ref{rem:GGModuleMaps}.
		 
		\item $A$ is a $\GG$-module such that
		\begin{gather}
			g \cdot (ab)=(g \cdot a)(g \cdot b),\label{eq:groupoidmodalg1}\\
			g \cdot 1_{s(g)} = 1_{t(g)},\label{eq:groupoidmodalg2}
		\end{gather}
		for all $g\in \GG_1$ and $a,b\in A_{s(g)}$.
	\end{enumerate}
\end{lemma}

By abuse of notation, we also refer to an $X$-decomposable $\kk$-algebra satisfying the conditions in Lemma~\ref{lem:GGModuleFormulas}(b) as a \emph{$\GG$-module algebra}. Next, we generalize the algebra automorphism group, $\AutA$, of a $\kk$-algebra $A$.

\begin{definition}[$\XAutA$]\label{def:autgroupoid2}
 Let $A=\bigoplus_{x\in X} A_x$ be an $X$-decomposable $\kk$-algebra. We define $\XAutA$, the \emph{$X$-algebra automorphism groupoid} of $A$, as follows: 
\begin{itemize}
 \item the object set is $X$,
 \item for any $x,y\in X$, $\Hom_{\XAutA}(x,y)$ 
 is the space of unital $\kk$-algebra isomorphisms between the unital $\kk$-algebras $A_x$ and $A_y$. The composition of morphisms is determined by the composition of the corresponding $\kk$-algebra morphisms.
\end{itemize}
\end{definition}

\begin{remark}\label{rem:autxal}
The $X$-algebra automorphism groupoid $\XAutA$ is an $X$-groupoid. Additionally, $\XAutA$ is a subgroupoid of $\GL_X(A)$  (see Definition~\ref{def:autgroupoid1}), and it is usually proper in the sense that the morphisms of $\XAutA$ are a proper subset of the morphisms of $\GL_X(A)$. 
In the case $|X| = 1$, this reduces to the statement that $\AutA$ is a proper subgroup of $\GL(A)$ (which is simply the statement that the $\kk$-algebra automorphisms of $A$ form a proper subset of the $\kk$-vector space automorphisms of the vector space $A$).
\end{remark}

Now, we give a generalization of the equivalence between the categorical and representation theoretic notions of action on $\kk$-algebras for groups. In other words, we explicitly calculate the symmetry object $\Sym_{\XGrpd}(A)$ over any $X$-decomposable $\kk$-algebra $A$ in the category $\XGrpd$ (recall Definition~\ref{def:Sym}).

\begin{proposition}\label{prop:groupoidalg}
Let $A = \bigoplus_{x\in X} A_x$ be an $X$-decomposable $\kk$-algebra and let $\GG$ be an $X$-groupoid. Then:
\begin{enumerate}[label=\normalfont(\alph*)]
 \item $A$ is an $\XAutA$-module algebra via $\ell_{\XAutA, A}(f \boxtimes a) = f(a)$ for all $f \boxtimes a \in \XAutA \boxtimes_{\XGrpd} A$ (that is, for all $f\in \XAutA_1$ and $a\in A_{s(f)}$). We denote $\ell_{\XAutA, A}(f \boxtimes a)$ by $f \rt a$.

\item Suppose that $A$ is a $\GG$-module algebra via $\ell_{\GG, A}$, and denote $g \cdot a := \ell_{\GG,A}(g \boxtimes a)$ for all $g \boxtimes a \in \GG \boxtimes_{\XGrpd} A$. Then there is a unique $X$-groupoid morphism $\pi: \GG \to \XAutA$ such that $g \cdot a = \pi(g) \rt a$ for all $g \boxtimes a \in \GG \boxtimes_{\XGrpd} A$.

\item Every $X$-groupoid morphism $\pi: \GG \to \XAutA$ gives $A$ the structure of a $\GG$-module algebra via $\ell_{\GG,A}(g \boxtimes a) = \ell_{\XAutA, A}(\pi(g) \boxtimes a)$ for all $g \boxtimes a \in \GG \boxtimes_{\XGrpd} A$. 
 \end{enumerate}
Hence $\Sym_{\XGrpd}(A) = \XAutA$.
\end{proposition}

\begin{proof}
(a) For all $f\in \XAutA_1$ and $a,b\in A_{s(f)}$ we have
 \begin{gather*}
 f \rt (ab) = f(ab) = f(a) f(b) = (f \rt a)(f \rt b),\\ \quad 
 f \rt 1_{s(f)} = f(1_{s(f)})=1_{t(f)}
 \end{gather*}
 so by Lemma~\ref{lem:GGModuleFormulas} it follows that $A$ is an $\XAutA$-module algebra.

 (b) If $A$ is a $\GG$-module via $\cdot$, then by Lemma~\ref{lem.Gmodulegroupoid}, there is an associated subgroupoid of $\GL_X(A)$ with object set $X$ and its structure isomorphisms $\alpha_g$ (see Notation~\ref{not.struciso}). This induces a unique $X$-groupoid morphism $\pi: \GG \to \GL_X(A)$ such that and $\pi(g) = \alpha_g$ for all $x \in X$ and $g\in\GG_1$ (see Lemma~\ref{lem:groupoidrep}). Moreover, if $A$ is a $\GG$-module algebra, then it satisfies \eqref{eq:groupoidmodalg1} and \eqref{eq:groupoidmodalg2}, which are equivalent to map $\pi(g)$ being a unital $\kk$-algebra map for all $g \in G$. Hence, the image of the groupoid morphism $\pi$ is contained in the subgroupoid $\XAutA$ of $\GL_X(A)$. Corestricting $\pi$ to a map $\GG \to \XAutA$ gives the result.

(c) If $\pi: \GG \to \XAutA$ is any $X$-groupoid morphism, then we can define a map $\ell_{\GG, A}: \GG \boxtimes_{\XGrpd} A \to A$ by $\ell_{\GG,A}(g \boxtimes a) = \ell_{\XAutA, A}(\pi(g), a)$ for all $g \boxtimes a \in \GG \boxtimes_{\XGrpd} A$. Since $\pi$ is an $X$-groupoid morphism, we see that if $g, h \in \GG_1$ with $t(h) = s(g)$ and $a \in A_{s(h)}$, then
\[
(gh)\cdot a = \pi(gh) \rt a = [\pi(g)\pi(h)] \rt a = \pi(g) \rt (\pi(h) \rt a) = g \cdot (h \cdot a),
\]
and for all $a \in A_x$
\[
e_x \cdot a = \pi(e_x) \rt a = a.
\]
Hence, $\cdot$ makes $A$ a $\GG$-module. Further for all $g \in \GG_1$ and $a,b \in A_{s(g)}$, we have
\[
g \cdot (ab) = \pi(g) \rt (ab) = (\pi(g) \rt a)(\pi(g) \rt b) = (g \cdot a)(g \cdot b)
\]
and
\[
g \cdot 1_{s(g)} = \pi(g) \rt 1_{s(g)} = 1_{t(\pi(g))} = 1_{t(g)}
\]
and so $A$ is a $\GG$-module algebra.
\end{proof}

\begin{example}
	In Example~\ref{exa:groupoid}\ref{it:exgroupoid1}, the $\GG$-module $\kk^4 \cong \kk^2 \oplus \kk^2$ is a $\GG$-module algebra with functor $\pi: \GG \to \Aut_{\XAlg}(\kk^4)$ induced from $\pi$ in Example~\ref{exa:xdecomp1rep}.
In Example~\ref{exa:groupoid}\ref{it:exgroupoid4}, the $\GG$-module $A = \kk[t, t\inv] \oplus \kk[t, t\inv]$ is a $\GG$-module algebra with the functor $\pi: \GG \to \GGAutA$.
	However, the $\GG$-module in Example~\ref{exa:groupoid}\ref{it:exgroupoid2} is not an example of a $\GG$-module algebra since the structure isomorphisms are not unital. Similarly, the induced $X$-decomposition of the $\GG$-module of Example~\ref{exa:groupoid}\ref{it:exgroupoid3} does not make it into an $\GG$-module algebra.
\end{example}

We end this section with a well-known result (see \cite[Proposition~1.2]{CM1984}), which is the reinterpretation of Proposition~\ref{prop:groupoidalg} in the case that $|X| = 1$ (that is, when $\GG$ is a group).

\begin{remark} \label{rem:group}
Let $A$ be a $\kk$-algebra. Then $A$ is an $\AutA$-module algebra and for a group $G$, the following are equivalent.
\begin{enumerate}[label=\normalfont(\alph*)]
 \item $A$ is a $G$-module algebra.
 \item There exists a group morphism $\pi: G \to \Aut_{\Algcat}(A)$. 
\end{enumerate}
Hence, $\Sym_{\Group}(A) = \AutA$.
\end{remark}

\section{Actions of groupoid algebras on algebras}
\label{sec:groupoidalg}

It is well-known that modules over a group correspond to modules over its group algebra. In this section, we extend this correspondence to the groupoid setting (see Theorem~\ref{thm:repkGrp}). This allows us to obtain results about module algebras over groupoid algebras, as given in Proposition~\ref{prop:kGrpd}. Along the way, we also demonstrate that finite groupoid actions on domains factor through group actions (see Corollary~\ref{cor:grpoid-innerfaith}), highlighting that the study of groupoid actions represents a natural generalization of group actions on domains.


\subsection{Weak Hopf algebras}
\label{sec:weakbk}

While a group algebra is a Hopf algebra, a groupoid algebra is a \emph{weak} Hopf algebra. Therefore, we start by providing the definition of a weak bialgebra and recalling some fundamental properties. For more details, we refer the reader to \cite{BNS,NV}.

\begin{definition}[Weak bialgebra] \label{def:wba} 
A \textit{weak bialgebra} is a quintuple $(H,m,u,\Delta, \varepsilon)$ such that
\begin{enumerate}[label=\normalfont(\alph*)]
 \item $(H,m,u)$ is a $\kk$-algebra,
 
 \item $(H, \Delta, \varepsilon)$ is a $\kk$-coalgebra, 
 \item \label{def:wba3} $\Delta(ab)=\Delta(a)\Delta(b)$ for all $a,b \in H$, 
 \item \label{def:wba4} $\varepsilon(abc)=\varepsilon(ab_1)\varepsilon(b_2c)=\varepsilon(ab_2)\varepsilon(b_1c)$ for all $a,b,c \in H$, 
 \item \label{def:wba5} $\Delta^2(1_H)=(\Delta(1_H) \otimes 1_H)(1_H \otimes \Delta(1_H))=(1_H \otimes \Delta(1_H))(\Delta(1_H) \otimes 1_H)$. 
\end{enumerate}
Here, we use \emph{sumless Sweedler notation}, which means that we write $\Delta(h):= h_1 \otimes h_2$ for any $h \in H$. A \emph{weak bialgebra morphism} between two weak bialgebras is a linear map that is both a $\kk$-algebra and a $\kk$-coalgebra map.
\end{definition}

\begin{definition}[$\varepsilon_s$, $\varepsilon_t$, $H_s$, $H_t$] \label{def:eps}
Let $(H, m, u, \Delta, \varepsilon)$ be a weak bialgebra. We define the {\it source and target counital maps}, respectively as follows:
\[
\begin{array}{ll}
 \varepsilon_s: H \longrightarrow H, & x \mapsto 1_1\;\varepsilon(x1_2) \\
 \varepsilon_t: H \longrightarrow H, & x \mapsto \varepsilon(1_1x)\;1_2.
\end{array}
\]
We denote the images of these maps by $H_s:=\varepsilon_s(H)$ and $H_t:=\varepsilon_t(H)$, which are called the \emph{source} and \emph{target counital subalgebras} of $H$, respectively.
\end{definition}

\begin{remark}[{\cite{BNS,BCJ}}]
\label{rem:HsHt}
Since the counital subalgebras are separable and Frobenius, they are finite-dimensional. Moreover, $H_t^{\text{op}} \cong H_s$ as $\kk$-algebras.
\end{remark}

\begin{definition}[Weak Hopf algebra] \label{def:weak}
A \textit{weak Hopf algebra} is a sextuple $(H,m,u,\Delta,\varepsilon, S)$, where the quintuple $(H,m,u,\Delta,\varepsilon)$ is a weak bialgebra and $S: H \to H$ is a linear map, called the \textit{antipode}, that satisfies the following properties for all $h\in H$:
 $$S(h_1)h_2=\varepsilon_s(h), \quad \quad
h_1S(h_2)=\varepsilon_t(h), \quad \quad
S(h_1)h_2S(h_3)=S(h).$$
A {\it weak Hopf algebra morphism} between two weak Hopf algebras is a weak bialgebra map $f:H \to H'$ such that $S_{H'} \circ f = f \circ S_H$.

A \emph{(weak) Hopf ideal} $I$ of a weak Hopf algebra $H$ is a biideal of $H$ such that $S(I) \subseteq I$.
\end{definition}

\begin{remark}[\cite{BNS, NV}] \label{rem:antipodeprops}
It follows from \cref{def:weak} that $S$ is anti-multiplicative with respect to $m$, and anti-comultiplicative with respect to $\Delta$. Moreover, the following conditions are equivalent: $\Delta(1_H)=1_H\otimes 1_H$;\; $\ep(xy)=\ep(x)\ep(y)$ for all $x,y\in H$;\; $S(x_1)x_2=\ep(x)1_H$ for all $x \in H$; \; $x_1S(x_2)=\ep(x)1_H$ for all $x \in H$.
In this case, $H$ is a Hopf algebra.
\end{remark}

\begin{remark}
\label{Hmodboxtimes}
Since a weak Hopf algebra $H$ is a $\kk$-algebra, it has a category of $H$-modules, $H\lmod$, whose objects are vector spaces $V$ endowed with an action map $H \otimes_{\kk} V \to V$ in the usual sense (that is, $(hk)\cdot v = h \cdot (k \cdot v)$ and $1_H \cdot v = v$ for all $h,k\in H$ and $v \in V$). Hence, in the language of Remark~\ref{def:catHopflike}, if $\CC$ is any of the categories of weak Hopf algebras in Notation~\ref{not:cats}, we may take $\boxtimes_{\CC}$ to be $\otimes_{\kk}$.

The category $H\lmod$ is monoidal, where if $V$ and $W$ are $H$-modules then
\[
V \otimes_{H \lmod} W:=\Delta(1_H)\cdot(V \otimes W) \subseteq V \otimes_{\kk} W
\]
and $\unit_{H\lmod} = H_t$ \cite[Section~2.1]{BSK}.
\end{remark}

Since, for a weak Hopf algebra $H$, $H\lmod$ is a monoidal category, we may define an algebra object $A$ in $H\lmod$. However, as mentioned in Remark~\ref{rem:Hmodulealgebra}, since the monoidal product $\otimes_{H\lmod}$ is not $\otimes_{\kk}$, it is not immediately obvious that such an algebra object has a $\kk$-algebra structure. Nevertheless, by work of the first author and Reyes \cite{CR}, it is possible to construct an isomorphism of categories between the algebra objects in $H\lmod$ and a certain category consisting of $\kk$-algebras that are $H$-modules.

\begin{proposition}[\cite{CR}, see also \cite{WWW}] \label{def:modulealgoverWHA}
Let $H$ be a weak Hopf algebra. Then the following hold.

\begin{enumerate}[label=\normalfont(\alph*)]
\item If $(A, \underline{m}_A: A \otimes_{H \lmod} A \to A, \underline{u}_A: H_t = \unit_{H \lmod} \to A)$ is an algebra object in $H\lmod$, then $A$ can be given the structure of a $\kk$-algebra by defining $m_A: A \otimes_{\kk} A \to A$ and $u_A: \kk \to A$ by $m_A(x \otimes y) = \underline{m}_A(1_1 \cdot x \otimes_{H \lmod} 1_2 \cdot y)$ and $u_A(1_{\kk}) = \underline{u}_A (1_{H_t})$. This $H$-action on the $\kk$-algebra $A$ satisfies
\begin{equation}
\label{eq.Hmodalgebra}
h\cdot ab=(h_1\cdot a)(h_2\cdot b) \quad \text{and} \quad h\cdot 1_A=\varepsilon_t(h)\cdot 1_A \quad \text{for all $a,b \in A$ and $h \in H$.}
\end{equation}

\item Conversely, if a $\kk$-algebra $(A, m_A: A \otimes_{\kk} A \to A, u_A: \kk \to A)$ is an $H$-module which satisfies equation \eqref{eq.Hmodalgebra}, then the $A$ can be viewed as an algebra object in $H\lmod$ $(A, \underline{m}_A: A \otimes_{H \lmod} A \to A, \underline{u}_A: H_t = \unit_{H \lmod} \to A)$ by defining $\underline{m}_A(x \otimes_{H\lmod} y) = (1_1 \cdot x)(1_2 \cdot y)$ and $\underline{u}_A(h) = h \cdot 1_A$.
 
\end{enumerate}
\end{proposition}
Hence, if $A$ is a $\kk$-algebra and an $H$-module which satisfies equation~\eqref{eq.Hmodalgebra}, then we call $A$ an \emph{$H$-module algebra}. The proof of the following proposition is straightforward.

\begin{proposition} \label{prop:weak-direct} 
Let $\{H_x\}_{x \in X}$ be a finite collection of a weak Hopf algebras.
\begin{enumerate}[label=\normalfont(\alph*)]
\item Then $H = \bigoplus_{x \in X} H_x$ is a weak Hopf algebra. The algebra operations and antipode are defined component-wise, while the coalgebra structure is given by the sum of the component-wise operations.
Moreover, $H_s = \bigoplus_{x \in X} (H_x)_s$ and $H_t = \bigoplus_{x \in X} (H_x)_t$.
 
\item If $A_x$ is a $H_x$-module algebra for each $x \in X$, then $\bigoplus_{x\in X} A_x$ is an $H$-module algebra with $H$-action defined component-wise.   
\end{enumerate}
\end{proposition}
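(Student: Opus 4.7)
The plan is to verify the weak Hopf algebra axioms of Definitions~\ref{def:wba} and~\ref{def:weak} and the module algebra conditions of Proposition~\ref{def:modulealgoverWHA}(b) by reducing everything to the analogous statements in each summand. The central observation is that for $x \neq y$, any product in $H$ of elements from $H_x$ and $H_y$ vanishes, and the same holds in $H^{\otimes n}$ for products of tensors supported in different indices. Hence every multilinear weak Hopf axiom, when expanded in the decomposition $H = \bigoplus_{x \in X} H_x$, splits as a sum over the diagonal index with all cross terms vanishing.

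For part (a), first equip $H$ with the componentwise algebra structure, $1_H := \sum_{x} 1_{H_x}$, and define $\Delta_H|_{H_x} := \Delta_{H_x}$ (via $H_x \otimes H_x \hookrightarrow H \otimes H$) and $\varepsilon_H|_{H_x} := \varepsilon_{H_x}$, extended linearly. Axiom~\ref{def:wba}(3) holds because both $\Delta(ab)$ and $\Delta(a)\Delta(b)$ vanish unless $a, b$ lie in the same summand, and then they reduce to the identity in that $H_x$. Axiom~\ref{def:wba}(4) is checked analogously. For axiom~\ref{def:wba}(5), expand $\Delta(1_H) = \sum_{x} \Delta_{H_x}(1_{H_x})$ and observe that in $H^{\otimes 3}$ the products $(\Delta(1_H) \otimes 1_H)(1_H \otimes \Delta(1_H))$ and $(1_H \otimes \Delta(1_H))(\Delta(1_H) \otimes 1_H)$ decompose diagonally into the corresponding identity for each $H_x$. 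Set $S_H := \bigoplus_x S_{H_x}$ and verify the three antipode axioms by the same diagonal splitting. Finally, for $h \in H_x$, the definition of $\varepsilon_s(h) = 1_1\,\varepsilon_H(h 1_2)$ together with $h \cdot 1_{H_y,2} = 0$ for $y \neq x$ yields $\varepsilon_s(h) = (\varepsilon_{H_x})_s(h) \in (H_x)_s$, so $H_s = \bigoplus_x (H_x)_s$, and symmetrically for $H_t$.

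For part (b), define the $H$-action on $A = \bigoplus_x A_x$ by declaring $h_x \cdot a_y := 0$ if $x \neq y$ and $h_x \cdot a_x$ to be the given $H_x$-action otherwise, extended bilinearly. The left module axioms are immediate from the $H_x$-module axioms together with $1_H = \sum_x 1_{H_x}$. For the module algebra condition, both $h \cdot (ab)$ and $(h_1 \cdot a)(h_2 \cdot b)$ reduce to $\sum_x (h_{x,1} \cdot a_x)(h_{x,2} \cdot b_x)$ by the diagonal splitting and the $H_x$-module algebra identity. The unit condition $h \cdot 1_A = \varepsilon_t(h) \cdot 1_A$ follows componentwise, using the decomposition $H_t = \bigoplus_x (H_x)_t$ from part~(a) to identify the target counital actions. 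The main obstacle, though not conceptually deep, is the careful bookkeeping of indices when expanding identities in $H \otimes H$ and $H^{\otimes 3}$, so that the diagonal reduction is transparent; once that is handled, every axiom is inherited from the corresponding axiom in each $H_x$ or $A_x$.
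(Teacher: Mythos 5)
Your proof is correct, and it is the intended argument: the paper states this proposition with no proof (the \qed\ follows the statement), treating the componentwise verification as routine. Your diagonal-splitting observation --- that all cross terms between distinct summands vanish in $H^{\otimes n}$, so every weak Hopf and module-algebra axiom reduces to the corresponding axiom in each $H_x$ --- is exactly the verification the authors leave to the reader, and your computation of $\varepsilon_s$, $\varepsilon_t$ on each summand correctly yields the claimed decompositions of $H_s$ and $H_t$.
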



\subsection{Modules and representations over groupoid algebras}
\label{sec:grpdrep}

Now, we shift our focus to groupoid algebras and explore the relationship between their modules and representations and those of the base groupoid. As before, throughout this section, $\GG$ denotes an arbitrary $X$-groupoid.

\begin{definition}[$\kk\GG$] \label{def:grpdalg}
The \emph{groupoid algebra} $\kk\GG$ is defined as the $\kk$-algebra with a vector space basis given by $\GG_1$, and the product of two morphisms being their composition if defined and $0$ otherwise, extended linearly. Here, $1_{\kk\GG} = \sum_{x\in X} e_x$. Note that $\kk \GG$ is a weak Hopf algebra with $\Delta(g) = g \otimes g$, $\ep(g) = 1_\kk$, and $S(g) = g^{-1}$, for all $g \in \GG_1$. Here, $(\kk \GG)_s = (\kk \GG)_t = \bigoplus_{x \in X} \kk e_x \; \cong \; \kk^{|X|}$.
\end{definition}

For a group $G$, the previous construction coincides with the group algebra $\kk G$ and its Hopf algebra structure.

Consider the category $\kk\GG\lmod$ of \emph{left $\kk\GG$-modules}. Since $\kk\GG$ is a weak Hopf algebra, it follows from Remark~\ref{Hmodboxtimes} that $\kk\GG\lmod$ is a monoidal category. The next result showcases the connection between $\kk\GG\lmod$ and the previously introduced monoidal category $\GGmod$ of left $\GG$-modules (recall Lemma~\ref{lem:groupoid-mod}).

\begin{lemma} \label{lem:kGGmod}
The categories $\GGmod$ and $\kk\GG\lmod$ are monoidally isomorphic.
\end{lemma}

\begin{proof}
    Consider the functor $F:\kk\GG\lmod \to \GGmod$ that sends a left $\kk\GG$-module $V$, with action $\cdot$, to the left $\GG$-module $V=\bigoplus_{x\in X} V_x$, where $V_x = e_x \cdot V$ for all $x \in X$. The $\GG$-action is a restriction of the $\kk\GG$-action and can be denoted similarly as $\cdot$. Moreover, if $f: V \to W$ is a $\kk\GG$-module morphism, then $F(f)=(f_x)_{x\in X}$, where $f_x: V_x \to W_x$ is defined by $f_x(e_x\cdot v)=e_x\cdot f(v)$ for all $x\in X$. Then $F$ is an isomorphism of categories, with its inverse obtained by linearizing the $\GG$-actions.

    From Definition~\ref{def:grpdalg}, $\Delta(1_{\kk\GG})=\sum_{x\in X} e_x \otimes e_x$. Therefore, if $V,W$ are left $\kk\GG$-modules, we have $V \otimes_{\kk\GG\lmod} W= \bigoplus_{x\in X} V_x \otimes W_x$. Using the monoidal structure of the category $\GGmod$ (Lemma~\ref{lem:groupoid-mod}), define $F_{V, W}:F(V)\otimes_{\GGmod} F(W)\to F(V\otimes_{\kk\GG\lmod}W)$ as $(F_{V, W})_x = \id_{V_x\otimes W_x}$ for all $x\in X$. Then $\{F_{V,W}\}_{V,W\in\kk\GG\lmod}$ forms a natural isomorphism. Also, consider the morphism $F_0:\unit_{\GGmod}\to F(\unit_{\kk\GG\lmod})$ in $\GGmod$ given by $(F_0)_x=\id_{\kk}: \kk \to \kk$ for all $x\in X$, which is clearly invertible. Straightforward verification confirms that $F$ satisfies the associativity and unit constraints, proving it is a strong monoidal functor.
\end{proof}

For a group $G$, this result recovers the classical monoidal isomorphism between $\kk G\lmod$ and $G\lmod$, which is obtained by restricting the group algebra action to a group action.

For a vector space $V$, we let $\End(V) = \Hom_{\Vec}(V, V)$ denote the usual endomorphism ring of $V$. If $V = \bigoplus_{x \in X} V_x$ is $X$-decomposable with each $V_x \neq 0$, then for each $x \in X$, we write $\pi_x: V \to V_x$ to denote the canonical projection and $\iota_x: V_x \to V$ to denote the canonical inclusion. Note that $\iota_x \pi_x$ is an idempotent of $\End(V)$ whose restriction to $V_x$ is $\id_{V_x}$.

\begin{definition}[Representation of $\kk\GG$]\label{def:kGGmod}
A \emph{representation of $\kk\GG$} is an $X$-decomposable vector space $V=\bigoplus_{x\in X} V_x$ equipped with a $\kk$-algebra morphism $\widetilde{\pi}: \kk \GG \rightarrow \End(V)$, such that $\widetilde{\pi}(e_x) = \iota_x \pi_x$, for all $x \in X$, called the \emph{representation map} of $V$. We often denote this as $(V,\widetilde{\pi})$. A morphism between $\kk\GG$-representations $(V, \widetilde{\pi})$ and $(W, \widetilde{\eta})$ is a linear map $\varphi: V \to W$ with $\varphi(V_x) \subset W_x$ for all $x \in X$, such that $\varphi \circ \widetilde{\pi}(g) = \widetilde{\eta}(g) \circ \varphi$ for $g \in \GG_1$.
\end{definition}

In a $\kk\GG$-representation $(V,\widetilde{\pi})$ we have $\widetilde{\pi}(g)=\widetilde{\pi}(ge_{s(g)})=\widetilde{\pi}(g)\iota_{s(g)}\pi_{s(g)}$ for every $g\in\GG_1$. Similarly, $\widetilde{\pi}(g)=\widetilde{\pi}(e_{t(g)}g)=\iota_{t(g)}\pi_{t(g)}\widetilde{\pi}(g)$. This implies that $\widetilde{\pi}(g)(v)\in V_{t(g)}$ if $v \in V_{s(g)}$, and the result is zero otherwise. Hence, we can consider $\widetilde{\pi}(g)$ as a map from $V_{s(g)}$ to $V_{t(g)}$.

The $\kk\GG$-representations, together with their morphisms, form a category that possesses a monoidal structure.

\begin{lemma}[$\rep(\kk\GG)$]\label{lem:groupoidalg-rep}
    Let $\rep(\kk\GG)$ be the category of $\kk\GG$-representations. This category admits a monoidal structure as follows.
	\begin{itemize}
	\item If $(V,\widetilde{\pi}),(W,\widetilde{\eta}) \in \rep(\kk\GG)$, then $V \otimes_{\rep(\kk\GG)} W = \bigoplus_{x\in X} V_x \otimes W_x$; the representation map $\kk\GG \to \End(V \otimes_{\rep(\kk\GG)} W)$ is given by the linearization of $g \mapsto \widetilde{\pi}(g) \otimes \widetilde{\eta}(g)$, for all $g\in \GG_1$,
	\item $\unit_{\rep(\kk\GG)} = \bigoplus_{x\in X} \kk$; the representation map $\kk\GG \to \End(\unit_{\rep(\kk\GG)})$ is given by the linearization of $g \mapsto \id_{\kk}$, for all $g\in \GG_1$.
\end{itemize}
If $\varphi:V \to W$ and $\varphi':V'\to W'$ are morphisms of $\kk\GG$-representations, then $\varphi \otimes_{\rep(\kk\GG)} \varphi' = \bigoplus_{x\in X}(\varphi|_{V_x} \otimes \varphi'|_{V'_x})$. The associativity constraint is that induced by the tensor product of components, and the left/right unital constraints are given by scalar multiplication.
\end{lemma}

In Corollary~\ref{cor:repiso} we describe the connection between $\rep(\kk\GG)$ and $\rep(\GG)$.


\subsection{Local units of algebras} \label{sec:locunit}

Next, we introduce the notion of local units of a $\kk$-algebra. This will serve the purpose of generalizing the group of units functor to the groupoid setting; see Theorem~\ref{thm:repkGrp}(a) below.

\begin{definition}[$\widehat{a}$] Let $A$ be a $\kk$-algebra and $\{ e_x \}_{x\in X}$ be a set of nonzero orthogonal idempotents of $A$. For $x,y \in X$, an element $a \in e_y A e_x$ is called a \emph{local unit} if there exists an element $b \in e_x A e_y$ such that $ab = e_y$ and $ba = e_x$. The element $b$ is called a {\it local inverse} of $a$, which we denote by $\widehat{a} := b$.
\end{definition}

The notions above are well-defined due to the following straightforward lemma.

\begin{lemma}\label{lem:localunits}
	Let $A$ be a $\kk$-algebra and let $\{ e_x \}_{x\in X}$ be a set of nonzero orthogonal idempotents of $A$. Suppose that $a \in e_y A e_x$ is a local unit of $A$ for some $x,y \in X$. Then:
	\begin{enumerate}[label=\normalfont(\alph*)]
	\item $a$ is nonzero. 
	 
	\item If $a \in e_{y'} A e_{x'}$ for $x',y'\in X$, then $x=x'$ and $y=y'$. 
	 
	\item If there exist $b, b' \in e_x A e_y$ such that $ab = ab'=e_y$ and $ba =b'a=e_x$, then $b=b'$.
	 
 \item For $x,y,z\in X$, if $a_1\in e_y A e_x$ and $a_2 \in e_z A e_y$ are local units of $A$, then the product $a_2a_1 \in e_z A e_x$ is also a local unit of $A$.   
	\end{enumerate}
\end{lemma}

\begin{remark}\label{rem:localunits}
	In contrast with the notion of local identities introduced in Remark~\ref{rem:localid} for $X$-decomposable $\kk$-algebras, the idempotents in Lemma~\ref{lem:localunits} are not required to be central, nor do we require the condition $1_A=\sum_{x\in X} e_x$. This allows us to work with a broader scope of $\kk$-algebras towards our main result, Theorem~\ref{thm:repkGrp}. For example, for an $X$-groupoid $\GG$, the groupoid algebra $\kk \GG$ has the set of nonzero orthogonal idempotents $\{e_x\}_{x\in X}$, and even though $1_{\kk\GG} = \sum_{x\in X} e_x$, the idempotents are $e_x$ are not central, in general. Hence, unless $\GG$ is totally disconnected, $\kk \GG$ is not a $X$-decomposable $\kk$-algebra via these elements.
\end{remark}

\begin{definition}[$(-)^{\times}_X$]\label{def:groupoidL}
 Let $A$ be a $\kk$-algebra and $\{ e_x \}_{x\in X}$ be a set of nonzero orthogonal idempotents of $A$. The \emph{groupoid of local units} of $A$, denoted by $A^{\times}_X$, is defined as follows.
 \begin{itemize}
 \item The objects are the elements $x\in X$.
 \item For each $x,y\in X$, $\Hom_{A^{\times}_X}(x,y) = \{a \in e_y A e_x \mid \text{$a$ is a local unit of $A$} \}$. In this case we write $a: x \to y$.
 \item Given two morphisms $a: x\to y$ and $a': y \to z$, we define their composition as the product $a'a: x\to z$.
 \item The inverse of a morphism $a: x \to y$ is given by its local inverse $\widehat{a}: y \to x$.
 \end{itemize}
\end{definition}

\begin{remark}\label{rem:groupoidL}
\begin{enumerate}[label=\normalfont(\alph*)]
	\item Lemma~\ref{lem:localunits} guarantees that the construction above is well-defined and indeed yields an $X$-groupoid. 
    \item For a $\kk$-algebra $A$, the construction of $A^{\times}_X$ strongly depends on the chosen family of idempotents $\{e_x \mid x \in X\}$. As a result, $A$ may have multiple different associated groupoids of local units. It would be more natural, notationally, for $X$ to denote the set of idempotents rather than an index set. However, in this paper, we generally begin by choosing an idempotent decomposition of $1 = \sum_{x \in X} e_x$, which then determines our index set $X$. In fact, we frequently use $X$ interchangeably to refer both to the set of idempotents and to the index set. Since different choices of idempotents are never assigned the same index set, this should not cause any confusion.
 \end{enumerate}
\end{remark}

It would be interesting to investigate how the groupoid of local units varies with different choices of idempotent decompositions, though we do not explore this here. We do, however, provide an example below that demonstrates two different choices of idempotent decomposition.

\begin{example}\label{exa:localunits}
	\begin{enumerate}[label=\normalfont(\alph*)]
 \item Let $A = \kk[x] \oplus \kk[y] \oplus \kk[z^{\pm 1}]$, with identity elements in each component denoted by $e_1$, $e_2$, and $e_3$, respectively.  Consider the two distinct sets of orthogonal idempotents $X = \{e_1 + e_2, e_3\}$ and $Y = \{e_1, e_2 + e_3\}$ in $A$. Both choices yield groupoids of local units, $A_X^\times$ and $A_Y^\times$, each with two objects and no morphisms between different objects. However, the groups of morphisms at the objects differ between the two cases. In $A_X^\times$, the groups of morphisms are $\kk^\times \times \kk^\times$ and $\kk[z^{\pm 1}]$, whereas in $A_Y^\times$, they are $\kk^\times$ and $\kk^\times \times \kk[z^{\pm 1}]$.
 
\item If $|X|=1$, then for any $\kk$-algebra $A$ we can take $\{1_A\}$ as the set of idempotents indexed by $X$. In this case, $A^{\times}_X$ is precisely the group of units $A^\times$.
		
\item If $\GG$ is an $X$-groupoid and $A= \kk \GG$, then the identity morphisms $\{e_x\}_{x\in X}$ of $\GG$ form a set of orthogonal nonzero idempotents in $\kk \GG$. Hence, $\GG$ is a subgroupoid of $(\kk \GG)^{\times}_{X}$ since $gg^{-1} = e_{t(g)}$ and $g^{-1}g = e_{s(g)}$ for all $g\in \GG_1$. If $|X| = 1$, then this reduces to the fact that $\GG$ is a subgroup of the group $(\kk \GG)^\times$. Note that, in general, $(\kk \GG)_X^\times$ is not equal to $\kk \GG$.

\item[(c)] Let $A=\bigoplus_{x\in X} A_x$ be an $X$-decomposable $\kk$-algebra. Then $A^{\times}_X$ is a disjoint collection of groups due to the fact that $\Hom_{A^{\times}_X}(x,x) \cong (A_x)^\times$ as multiplicative groups and $\Hom_{A^{\times}_X}(x,y) = 0$, for every $x,y\in X$ with $x \neq y$.

\item[(d)] Let $V= \bigoplus_{x\in X} V_x$ be an $X$-decomposable vector space with $V_x \neq 0$ for all $x \in X$. Consider $A = \End(V)$ and recall the notation introduced in the discussion before Definition~\ref{def:kGGmod}.
For all $x \in X$, we define $e_x = \iota_x \pi_x$. Then, $e_y A e_x \cong \Hom_{\Vec}(V_x,V_y)$, and so $f \in e_y A e_x$ is a local unit if and only if it is an isomorphism of vector spaces between $V_x$ and $V_y$. Hence, $A^{\times}_X \cong \GL_X(V)$ (recall Definition~\ref{def:autgroupoid1}). If $|X|=1$, then we recover that $(\End(V))^\times = \GL(V)$.
	\end{enumerate}
\end{example}

\subsection{Groupoids of grouplike elements}
\label{sec:gpdelt}

Now, we study the groupoid of grouplike elements $\Gamma(H)$ of a weak Hopf algebra $H$, which extends the classical group formed by the grouplike elements of a Hopf algebra. In a weak Hopf algebra $H$, we call an element $h \in H$ \emph{grouplike} if $\Delta(h) = h \otimes h$ and $\epsilon(h) = 1_{\kk}$. Following \cite[Corollary~6.6, Proposition~6.8, Theorem~8.4]{BGL}, we see that the grouplike elements of $H$ naturally form a groupoid.

\begin{definition}[$\Gamma(H)$] \label{def:groupoidGamma}
Let $H$ be a weak Hopf algebra. The \emph{groupoid of grouplike elements} of $H$, denoted $\Gamma(H)$, is defined as follows. The morphisms of $\Gamma(H)$ are the elements in
\[
\Gamma(H)_1 = \{ h \in H : \Delta(h) = h \otimes h, \; \ep(h) = 1_\kk\}.
\]
The object set $\Gamma(H)_0$ is $\Gamma(H)_1 \cap H_s$. 
Each $h \in \Gamma(H)_1$ is an element in $\Hom_{\Gamma(H)}(\ep_s(h), \ep_t(h))$ with composition defined by restriction of the product in $H$. The inverse of a morphism $h \in \Hom_{\Gamma(H)}(\ep_s(h), \ep_t(h))$ is given by $S(h) \in \Hom_{\Gamma(H)}(\ep_t(h), \ep_s(h))$. 
\end{definition}

If $H$ is a Hopf algebra, then  $\Gamma(H)_1$ gives the classical group of grouplike elements of $H$, and so we use the same notation $\Gamma(H)$ whether $H$ is a Hopf algebra or a weak Hopf algebra.
We remark that $\Gamma(H)_0$ is a finite set since $H_t$ is finite-dimensional and grouplike elements are linearly independent.
However, $\Gamma(H)_0$ may be empty (see Example~\ref{ex:hay} below).

We also remark that the definition of grouplike element that we use here is different to the one given by Nikshych, where an element $h \in H$ is called grouplike if $\Delta(h) = 1_1h \otimes 1_2 h = h 1_1 \otimes h 1_2$ \cite[Definition 4.1]{Nik02}. With Nikshych's  definition, the grouplike elements in a weak Hopf algebra form a group, rather than a groupoid. For the present paper, the groupoid notion will be more useful.

We now seek to understand $\Gamma(H)_0$ in two special cases: when $H_s \cap H_t \cong \kk$ and when $H_s = H_t$.

The next result shows that it is possible to understand the object set $\Gamma(H)_0$ in terms of the \emph{minimal weak Hopf subalgebra} $\Hmin := H_s H_t$, which was studied in \cite{Nik02, Nik04}. Note that, since the $\kk$-algebras $H_s$ and $H_t$ are finite-dimensional and commute with each other, $\Hmin$ is indeed a finite-dimensional weak Hopf subalgebra of $H$.

\begin{lemma}
\label{lem:GammaH3}
Let $H$ be a weak Hopf algebra. Then 
\[
\Gamma(H)_0 = \{p \in H_s\cap H_t ~\mid~ p \text{ is a nonzero idempotent and } \dim(\Hmin p) = 1 \}.
\]
\end{lemma}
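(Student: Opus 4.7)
My plan is to prove the two inclusions separately, using the standard identities for a weak Hopf algebra $H$: $\ep_t(h_1) h_2 = h$, $h_1 S(h_2) = \ep_t(h)$, $S(h_1) h_2 = \ep_s(h)$; the fact that $\ep_t|_{H_t} = \id$ and $\ep_s|_{H_s} = \id$; the involutivity $S^2|_{H_t} = \id$ and $S^2|_{H_s} = \id$; the formulas $\Delta(z) = 1_1 z \otimes 1_2$ for $z \in H_t$ and $\Delta(z) = 1_1 \otimes z 1_2$ for $z \in H_s$; and the elementwise commutation $[H_s, H_t] = 0$. These all appear in \cite{BNS, NV}.

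For the forward inclusion, take $p \in \Gamma(H)_1 \cap H_t$. First, $\ep(p) = 1_\kk$ gives $p \ne 0$; specializing $\ep_t(h_1) h_2 = h$ to the grouplike element $p$ and using $\ep_t(p) = p$ (since $p \in H_t$) yields $p^2 = p$. To bootstrap from $p \in H_t$ to $p \in H_s$, I would combine $p_1 S(p_2) = \ep_t(p)$ (which for grouplike $p$ reads $p S(p) = p$) with $S$ applied to both sides and the identity $S^2|_{H_t} = \id$, obtaining $p S(p) = S(p)$ and hence $S(p) = p$; then $\ep_s(p) = S(p) p = p^2 = p$ places $p$ in $H_s$. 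For the dimension count, from $\Delta(p) = p \otimes p = 1_1 \otimes p 1_2$ (using $p \in H_s$), multiplying the second tensor slot on the left by $y \in H_s$ and applying $\id \otimes \ep$, combined with $\ep_s|_{H_s} = \id$, produces $yp = \ep(yp)\, p$; a symmetric argument using $\Delta(p) = 1_1 p \otimes 1_2$ handles $y \in H_t$. Thus $H_s p = H_t p = \kk p$, and $[H_s, H_t] = 0$ upgrades this to $\Hmin p = H_s H_t p = \kk p$.

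For the reverse inclusion, take $p \in H_s \cap H_t$ a nonzero idempotent with $\dim(\Hmin p) = 1$. The formula $\Delta(p) = 1_1 p \otimes 1_2$ places the first tensor factor inside $H_t p \subseteq \Hmin p = \kk p$, so $\Delta(p) = p \otimes a$ for some $a \in H_s$; symmetrically $\Delta(p) = b \otimes p$ for some $b \in H_t$. As a rank-at-most-one tensor equation with $p \ne 0$, $p \otimes a = b \otimes p$ forces $a = \lambda p$ and $b = \lambda p$ for a common scalar $\lambda$. Multiplicativity of $\Delta$ applied to $p^2 = p$ gives $a^2 = a$, so $\lambda \in \{0, 1\}$; the case $\lambda = 0$ is ruled out by $(\id \otimes \ep) \Delta(p) = p \ne 0$. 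Hence $\Delta(p) = p \otimes p$, and the counit axiom forces $\ep(p) = 1_\kk$, placing $p$ in $\Gamma(H)_1 \cap H_t = \Gamma(H)_0$.

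The main obstacle is the passage from $p \in H_t$ to $p \in H_s$ in the forward direction, for which the nontrivial relation $S^2|_{H_t} = \id$ is essential; without it one cannot close the loop $p S(p) = p \Rightarrow S(p) = p$. The rank-one tensor analysis in the reverse direction, which balances idempotency of $a$ and $b$ against the nonvanishing of $p$ to eliminate $\lambda = 0$, is the other delicate step.
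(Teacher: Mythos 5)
Your proof is correct, but it follows a genuinely different and more self-contained route than the paper's. Where the paper obtains $\Gamma(H)_0 \subseteq H_s \cap H_t$ and $p^2 = p$ by appealing to the groupoid structure of $\Gamma(H)$ established in [BGL] (a grouplike $p$ in the object set is its own source and target and is identified with an identity morphism), you derive both facts directly from $\ep_t(h_1)h_2 = h$ and the antipode axioms, with $S^2|_{H_t} = \id$ (equivalently, that $\ep_s|_{H_t}$ and $\ep_t|_{H_s}$ are mutually inverse) as the one nontrivial standard input; this is a legitimate fact from [BNS, NV], and your chain $pS(p) = \ep_t(p) = p \Rightarrow S(p) = p \Rightarrow \ep_s(p) = S(p)p = p$ is sound. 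For the dimension count the paper invokes Nikshych's result that $\Hmin p = p\Hmin p$ is a weak Hopf algebra with unit $p$, observes it is a Hopf algebra since $\Delta(p) = p \otimes p$, and concludes $\Hmin p = (\Hmin p)_{\min} = \kk p$; you instead extract $H_s p = H_t p = \kk p$ by hand from $\Delta(p) = p\otimes p$ together with $\Delta(z) = 1_1 z \otimes 1_2$ and $\ep_t|_{H_t} = \id$, which trades the external citation for a short computation. In the reverse inclusion the paper again uses the weak Hopf structure of $\Hmin p$ to write $\Delta(p) = \alpha\, p \otimes p$ at once, whereas your rank-one tensor argument ($\Delta(p) = p \otimes a = b \otimes p$, then idempotency of $a$ and the counit to force the scalar to be $1$) gets there from first principles; note that your explicit exclusion of the scalar $0$ via $(\id\otimes\ep)\Delta(p) = p \neq 0$ tidies a point the paper only handles implicitly in its subsequent counit computation. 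Two cosmetic remarks: with the paper's conventions ($\ep_s(x) = 1_1\ep(x1_2)$, $\ep_t(x) = \ep(1_1x)1_2$) the first leg of $\Delta(1_H)$ lies in $H_s$ and the second in $H_t$, so your labels of which counital subalgebra contains $1_1p$ and which contains $a$ are swapped, though this does not affect the argument; and in the forward direction the commutation $[H_s, H_t] = 0$ is not actually needed once $H_t p = \kk p$ is known, since $\Hmin p = H_s(H_t p) = H_s(\kk p) = \kk p$ directly. Overall, your version buys independence from the structure theory of $\Hmin$ in [Nik04] and from the [BGL] groupoid formalism, at the cost of invoking a few more of the elementary weak Hopf identities.
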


\begin{proof}
For any grouplike element $ p \in \Gamma(H)_0 = \Gamma(H)_1 \cap H_s $, since $p \in H_s$, we have $p = \varepsilon_s(p)$. From \cite[Lemma 6.5(iii) and (v)]{BGL}, it follows that
\begin{equation}
    \label{eq:epS}
    p = \varepsilon_s(p) = \varepsilon_t(S(p)) \in H_t.
\end{equation}
Therefore, $\Gamma(H)_0 \subseteq H_s \cap H_t$. Note that we may identify $p$ with the identity morphism at object $p$, and so $p^2 = p$. Since $\ep(p) = 1$, this shows that $p$ is a nonzero idempotent.

From \cite[Proposition 3.4 and Remark 3.6(ii)]{Nik02} we conclude that $p$ is a central idempotent of $\Hmin$, and thus $\Hmin p$ is a $\kk$-algebra with unit $p$. Clearly, $\Delta(\Hmin p) = \Delta(\Hmin) \Delta(p) \subseteq \Hmin p \otimes \Hmin p$. 
Moreover, since $p$ is identified with the identity morphism at $p$, we have $S(p) = p$. Hence,
for any element $xp \in \Hmin p$, we have
\[
S(xp) = S(p)S(x) = pS(x) = S(x)p \in \Hmin p,
\]
since $p \in H_s \cap H_t$ commutes with $\Hmin$.
Thus, $\Hmin p$ is a weak Hopf algebra. In fact, it is a Hopf algebra, since $p$ is the identity of $\Hmin p$ and $\Delta(p) = p \otimes p$.

Now we will show that $\Hmin p = (\Hmin p)_{\rm min}$. As an intermediate step will we show that $(H_s H_t p)_s = H_s p$. (A similar argument shows that $(H_s H_t p)_t = H_t p$.) Suppose first that we have an arbitrary element of $H_s p$, which we will write as $xp$ for some $x \in H_s$. Then $xp \in (H_s H_t p)_s$, since $xp \in H_s H_t p$ and  $xp \in H_s$ so $\ep_s(xp)=xp$. For the reverse containment, now suppose that we have an arbitrary element of $(H_s H_t p)_s$, which we will write as $\ep_s(xyp)$ for some $x \in H_s, y\in H_t$. Then we have
\begin{equation*}
\ep_s(xyp) = \ep_s(xy\ep_s(p))\overset{*}{=}\ep_s(xy)\ep_s(p)=\ep_s(xy)p \in H_s p,
\end{equation*}
where $*$ follows from \cite[Lemma 2.5, Eq. 2.13b]{BNS}. Then we have
\begin{equation*}
    (\Hmin p)_{\rm min} = (\Hmin p)_s (\Hmin p)_t = (H_s H_t p)_s (H_s H_t p)_t = (H_s p) (H_t p) = H_s H_t p = \Hmin p,
\end{equation*}
where the second to last equality is justified since $p\in H_s$ which commutes with $H_t$ and $p$ is idempotent.

Since $\Hmin p = (\Hmin p)_{\rm min} = \kk p$, we conclude that $\dim(\Hmin p) = 1$.
 
Conversely, suppose that $p \in H_s\cap H_t$ is a nonzero idempotent such that $\Hmin p$ has dimension 1. Because $\Hmin p$ has a weak Hopf algebra structure, $\Delta(\Hmin p)\subset \Hmin p\otimes \Hmin p$. Write $\Delta(p) = \alpha p \otimes p$, for $\alpha \in \kk$. Since $\Delta(p) = \Delta(p^k) = \alpha^k p\otimes p$ for any positive integer $k$, $\alpha^k = \alpha$ and so $\alpha = 1$. So, $\Delta(p) = p \otimes p$. Next, $p = (\varepsilon \otimes \id)\Delta(p) = \varepsilon(p)p$; thus, $\varepsilon(p) = 1_\kk$. Hence, $p \in \Gamma(H)_0$, as desired. 
\end{proof}

Next, we consider the case when $H_s \cap H_t \cong \kk$.

\begin{proposition}
\label{prop:GammaH1}
If $H$ is a weak Hopf algebra with $H_s \cap H_t = \kk$, then $H$ is either a Hopf algebra or $\Gamma(H)_0 = \emptyset$.
\end{proposition}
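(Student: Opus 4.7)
The plan is to argue by contrapositive: assume $\Gamma(H)_0 \neq \emptyset$ and conclude that $H$ must be a Hopf algebra. The key leverage point is Lemma~\ref{lem:GammaH3}, which says that every object of $\Gamma(H)$ is in particular a nonzero idempotent of $H_s \cap H_t$. Under the hypothesis $H_s \cap H_t \cong \kk$, this intersection is extremely constrained, essentially forcing the only candidate object to be $1_H$.

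Concretely, I would first observe that $1_H \in H_s \cap H_t$: applying the counit axiom to $\varepsilon_s(1_H) = 1_1\,\varepsilon(1_2)$ and $\varepsilon_t(1_H) = \varepsilon(1_1)\,1_2$ gives $1_H$ in both cases. So $\kk 1_H \subseteq H_s \cap H_t$, and by hypothesis this inclusion is an equality. Now pick any $p \in \Gamma(H)_0$. By Lemma~\ref{lem:GammaH3}, $p \in H_s \cap H_t = \kk 1_H$, so $p = \lambda 1_H$ for some $\lambda \in \kk$. The idempotency $p^2 = p$ forces $\lambda^2 = \lambda$, and since $p \neq 0$ (again by Lemma~\ref{lem:GammaH3}), we get $\lambda = 1$, i.e., $p = 1_H$.

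Finally, by Definition~\ref{def:groupoidGamma}, every morphism in $\Gamma(H)$ (and in particular every object, viewed as an identity morphism) satisfies $\Delta(p) = p \otimes p$. Taking $p = 1_H$ yields $\Delta(1_H) = 1_H \otimes 1_H$, which is one of the equivalent conditions (listed immediately after Definition~\ref{def:weak}) characterizing when a weak Hopf algebra is an actual Hopf algebra. Hence $H$ is a Hopf algebra, completing the contrapositive.

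I do not foresee a genuine obstacle here; the real work has already been done in Lemma~\ref{lem:GammaH3} and in the standard characterization of Hopf algebras among weak Hopf algebras via $\Delta(1_H) = 1_H \otimes 1_H$. The only point requiring a small amount of care is verifying that $1_H$ lies in both $H_s$ and $H_t$, which is immediate from the counit axiom, and ensuring we use the fact that $p$ being an object of $\Gamma(H)$ literally means $\Delta(p) = p \otimes p$ rather than invoking the $\dim(H_{\min} p) = 1$ characterization (which would be a longer route to the same conclusion).
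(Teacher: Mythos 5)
Your proof is correct and follows essentially the same route as the paper: both use Lemma~\ref{lem:GammaH3} to force any object of $\Gamma(H)_0$ to be a scalar multiple of $1_H$, and then deduce $\Delta(1_H)=1_H\otimes 1_H$, hence that $H$ is a Hopf algebra. Your version is marginally tidier in pinning down the scalar as $1$ via idempotency and in invoking the in-paper characterization of Hopf algebras among weak Hopf algebras rather than an external citation, but these are cosmetic differences.
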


\begin{proof}
By Lemma~\ref{lem:GammaH3}, we have that $|\Gamma(H)_0| \leq 1$. If $|\Gamma(H)_0| = 1$, then there exists some scalar $\alpha \in \kk$ such that $\Gamma(H)_0 = \alpha 1_H$. Since $\alpha 1_H$ is grouplike, $\Delta(1_H) = \alpha(1_H \otimes 1_H)$. By \cite[Proposition 2.2.2]{NV}, $H_t = H_s = \kk 1_H$ and hence $H$ is a Hopf algebra. Otherwise, $|\Gamma(H)_0| = 0$, whence $\Gamma(H)_0 = \emptyset$.
\end{proof}
\begin{example}
\label{ex:hay}
Let $N \geq 2$ be an integer and $\epsilon \in \{1, -1\}$. Consider the face algebra $H = \mf{S}(A_{N-1}; t)_{\epsilon}$ introduced by Hayashi \cite[Example 2.1]{Hayashi99b}. Then $H$ is a weak Hopf algebra with $H_s \cap H_t = \kk 1_H$. So by the result above, we have that $\Gamma(H)_0 = \emptyset$.
\end{example}

We now consider the case when $H_s = H_t$. In particular, this holds if $H$ is a cocommutative weak Hopf algebra.

\begin{proposition}
\label{prop:hs=ht}
Let $H$ be a weak Hopf algebra with $H_s = H_t$. Let $A$ be an $H$-module algebra, in the sense of Proposition~\ref{def:modulealgoverWHA}. Then the following statements hold.
\begin{enumerate}[label=\normalfont(\alph*)]
\item Let $\{e_1, \dots, e_n\}$ be a complete set of primitive orthogonal idempotents of $H$. Then $\Gamma(H)_0 = \{e_1, \dots, e_n\}$. In particular, $e_i$ is grouplike for all $i$.
\item Let $\{e_1, \dots, e_n\}$ be a complete set of primitive idempotents of $H$. Then $A = \bigoplus_{i=1}^n A_i$ is a $\{e_1, \dots, e_n\}$-decomposable $\kk$-algebra.
The local identities of $A$ are given by the family of orthogonal idempotents $\{e_i \cdot 1_A \mid 1\leq i\leq n\}$.
\item Suppose, further, that $H = \bigoplus_{x \in X} H_x$ is a direct sum of weak Hopf algebras $H_x$. Then $A = \bigoplus_{x \in X} A_x$ is $X$-decomposable, and for each $x \in X$, $A_x$ is an $H_x$-module algebra obtained by restricting the action of $H$ on $A$. Moreover, $H_y\cdot A_x =0$ if $x\ne y \in X$.
\end{enumerate}
\end{proposition} 

\begin{proof}
(a) If $H_s=H_t$, then $H_s=H_t=\Hmin$ is a commutative semisimple $\kk$-algebra by Remark~\ref{rem:HsHt}. Hence, $H_t=\bigoplus_{i=1}^n H_t e_i$, where $\{e_1, \dots, e_n\}$ is a complete set of primitive orthogonal idempotents. Since $\kk$ is algebraically closed, $H_te_i\cong \kk e_i$ for $i$. Then, the statement follows from Lemma~\ref{lem:GammaH3}.

(b) Let $Y := \{e_1, \dots, e_n\}$ as in part (a). Next define the linear map $\eps^{\prime}: H \to H$ by $\eps^{\prime}(h)= 1_1\ep(1_2h)$. It follows from a left-sided version of \cite[Proposition 4.15]{C-DG} that
\begin{equation}
 \label{eq:hmodule}
 (h\cdot 1_A)a=\ept(h) \cdot a \quad
 \textnormal{and} \quad 
 a(h\cdot 1_A)=\eps'(h) \cdot a, 
\end{equation}
for any $a\in A$ and $h\in H$.
Note that for $h \in H$, we have
\[
\eps'(h) = 1_1 \ep(1_2 h) = \textstyle \sum_{i=1}^n e_i \ep(e_i h) = \sum_{i=1}^n \ep(e_i h)e_i = \ep(1_1 h)1_2 = \ept(h).
\] 
Therefore, by \eqref{eq:hmodule} we see that
\begin{equation}
 \label{eq:hmodule2}
 (h\cdot 1_A)a=a(h\cdot 1_A)
\end{equation}
for any $a\in A$ and $h\in H$. 
By the computation above, we also see that for any $1 \leq j \leq n$, we have
\begin{align}
 \label{eq:epsej}
 \eps'(e_j)=e_j.
\end{align}

By Definition~\ref{def:XdecompA}, to show that $A$ is an $Y$-decomposable $\kk$-algebra it suffices to check that $\{e_i\cdot 1_A ~\mid~ 1\leq i\leq n\}$ is a complete set of orthogonal central idempotents of $A$. 
Indeed, 
\begin{equation*}
 (e_i\cdot 1_A)(e_j\cdot 1_A) \overset{\eqref{eq:hmodule}}{=}\eps'(e_j) \cdot(e_i\cdot 1_A)\overset{\eqref{eq:epsej}}{=}e_j\cdot (e_i\cdot 1_A)=\delta_{i,j}e_i\cdot 1_A.
\end{equation*}
So $e_i \cdot 1_A$ and $e_j \cdot 1_A$ are mutually orthogonal idempotents. It follows from $\sum_{i=1}^n (e_i\cdot 1_A) = (\sum_{i=1}^n e_i)\cdot 1_A=1_A$ that $\{e_i\cdot 1_A\mid 1 \leq i \leq n\}$ is a complete set of idempotents. Letting $h = e_i$ in \eqref{eq:hmodule2}, we get $(e_i\cdot 1_A)a = a(e_i\cdot 1_A)$ for all $a \in A$. Hence each $e_i \cdot 1_A$ is central in $A$.

(c) We give the proof in the case that $|X| = 2$. The proof easily generalizes to any finite set $X$. Let $H_x$ and $H_y$ be weak Hopf algebras and suppose that $A_x$ is an $H_x$-module algebra and $A_y$ is an $H_y$-module algebra. By part (b), we know that $A$ is $Y$-decomposable, where $Y = \{e_1, \dots, e_n\}$ is a complete set of primitive idempotents of $H$, and the $\kk$-algebra decomposition of $A$ is given by
\[
A = \textstyle \bigoplus_{i=1}^n (e_i \cdot 1_A) A.
\]
Suppose that $1_{H_x} = e_1 + \dots + e_k$ and $1_{H_y} = e_{k+1} + \dots + e_n$. Then
\[
A = \textstyle \bigoplus_{i=1}^k (e_i \cdot 1_A) A \oplus \bigoplus_{i=k+1}^n (e_i \cdot 1_A) A = (1_{H_x} \cdot 1_A)A \oplus (1_{H_y} \cdot 1_A) A.
\]
Hence, letting $A_x = (1_{H_x} \cdot 1_A) A$ and $A_y = (1_{H_y} \cdot 1_A) A$, we see that $A = A_x \oplus A_y$.

We claim that $H_x\cdot A_x \subseteq A_x$, but $H_x \cdot A_y = 0$ (and the statement holds by symmetry with $x$ and $y$ reversed). 
Choosing $h_x \in H_x$ and $a_y = (1_{H_y}\cdot 1_A)a \in A_y$ for some $a\in A$, we compute:
\[
h_x\cdot a_y = h_x\cdot((1_{H_y}\cdot 1_A)a) = ((h_x)_{1}\cdot(1_{H_y}\cdot 1_A))((h_x)_{2}\cdot a) = (((h_x)_{1} 1_{H_y})\cdot 1_A)((h_x)_{2}\cdot a)=0.
\] 
Without loss of generality, we assume that $h_x \cdot a_x = b_x + b_y$ where $h_x \in H_x$, $a_x, b_x \in A_x$, and $b_y \in A_y$. Then, 
$h_x \cdot a_x = 1_{H_x}\cdot b_x + 1_{H_x}\cdot b_y = b_x$.
So, $H_x\cdot A_x\subseteq A_x$. Likewise, $H_y\cdot A_y\subseteq A_y$ as claimed.
\end{proof}

As a corollary to the previous proposition, we obtain that groupoid actions on domains factor through group actions (see similar results for inner faithful Hopf actions on various domains factoring through actions of groups \cite{EW1, CEW1, CEW2, EW2, EW3}).

\begin{corollary} \label{cor:grpoid-innerfaith}
Suppose that $\GG$ is an $X$-groupoid and $A$ is a domain such that $A$ is an inner faithful $\kk \GG$-module algebra. Then $\GG$ is a disjoint union of groups, and at most one of the groups is nontrivial.
\end{corollary}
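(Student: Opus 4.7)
The plan is to combine Proposition~\ref{prop:hs=ht}(b), the domain hypothesis, and inner faithfulness to force $\GG$ to consist of a single group at one object. Concretely, $\kk\GG$ is a cocommutative weak Hopf algebra with $(\kk\GG)_s = (\kk\GG)_t = \bigoplus_{x\in\GG_0}\kk e_x$ (Example~\ref{ex:kGweak}), whose primitive idempotents are exactly $\{e_x\}_{x\in\GG_0}$. Proposition~\ref{prop:hs=ht}(b) then decomposes $A = \bigoplus_{x \in \GG_0} A_x$ with $\{e_x \cdot 1_A\}_{x \in \GG_0}$ a complete set of orthogonal central idempotents of $A$. Since $A$ is a domain, each such idempotent is $0$ or $1$ and they sum to $1_A$, so there is a unique $x_0 \in \GG_0$ with $e_{x_0}\cdot 1_A = 1_A$ (equivalently, $A = A_{x_0}$) while $e_y \cdot 1_A = 0$ for every $y \ne x_0$. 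Writing $a = 1_{\kk\GG} \cdot a = \sum_x e_x \cdot a$ and using $e_x \cdot a = (e_x \cdot 1_A)(e_x \cdot a)$, this further yields $e_{x_0} \cdot a = a$ and $e_y \cdot a = 0$ for all $a \in A$ and every $y \ne x_0$.

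Next I will pin down which arrows $g \in \GG_1$ annihilate $A$. Since each $g \in \GG_1$ is grouplike in $\kk\GG$ with $\varepsilon_t(g) = e_{t(g)}$, the weak Hopf module algebra axioms give $g \cdot 1_A = e_{t(g)}\cdot 1_A$ and $g \cdot a = (g \cdot 1_A)(g\cdot a)$, so $t(g) \ne x_0$ immediately forces $g \cdot a = 0$ for all $a \in A$. If instead $s(g) = x_0$ but $t(g) \ne x_0$, then $g \cdot a = 0$ by the previous sentence, yet the invertibility relation $g^{-1}g = e_{s(g)} = e_{x_0}$ gives $a = e_{x_0}\cdot a = (g^{-1}g)\cdot a = g^{-1}\cdot(g\cdot a) = 0$ for every $a$, contradicting $A \neq 0$. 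The mirror case $t(g) = x_0 \ne s(g)$ is handled by applying this argument to $g^{-1}$. Thus $s(g) = x_0$ if and only if $t(g) = x_0$, so the connected component of $x_0$ in $\GG$ contains only $x_0$ as an object, with loop group $G := \Hom_{\GG}(x_0, x_0)$.

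Finally, let $\GG'$ be the full sub-groupoid of $\GG$ on $\GG_0 \setminus \{x_0\}$, so that $\kk\GG = \kk G \oplus \kk\GG'$ as algebras. Thus $\kk\GG'$ is a two-sided ideal of $\kk\GG$, and since $\Delta(g) = g \otimes g \in \kk\GG' \otimes \kk\GG'$ and $S(g) = g^{-1} \in \kk\GG'$ for every $g \in \GG'_1$, it is a weak Hopf ideal of $\kk\GG$. By the preceding analysis, $\kk\GG' \cdot A = 0$, so inner faithfulness forces $\kk\GG' = 0$, whence $\GG$ coincides with the (single-object) group $G$, which is trivially a disjoint union of groups with at most one nontrivial group.

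The main hurdle will be the cross case $s(g) = x_0 \ne t(g)$ in the second step: producing the contradiction requires simultaneously exploiting the groupoid invertibility $g^{-1}g = e_{s(g)}$ and the identity action of $e_{x_0}$ on $A$, after which the symmetric cross case is dispatched via $g^{-1}$.
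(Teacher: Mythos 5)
Your first two steps are sound, and the second one (ruling out arrows between $x_0$ and the other objects via $g^{-1}g=e_{x_0}$ acting as the identity on $A$) is a genuinely nice point that the paper's own proof leaves implicit. The proof breaks down at the last step: $\kk\GG'$ is \emph{not} a weak Hopf ideal of $\kk\GG$. A weak Hopf ideal is by definition a biideal, and a biideal (in particular a coideal) must satisfy $\varepsilon(I)=0$, since otherwise the counit does not descend to the quotient. But $\kk\GG'$ contains the idempotents $e_y$ for $y\in\GG_0\setminus\{x_0\}$, and $\varepsilon(e_y)=1_\kk\neq 0$. So inner faithfulness tells you nothing about $\kk\GG'$, and the conclusion that $\GG_0=\{x_0\}$ does not follow. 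Indeed, that conclusion is strictly stronger than the corollary (whose statement explicitly allows a disjoint union containing several trivial groups) and is false: adjoining to a group at $x_0$ an isolated object $y$ acting by zero does not destroy inner faithfulness, precisely because the only candidate annihilating ideal $\kk e_y$ fails the counit condition. The fact that your argument would rule out such examples is a sign that the claimed ideal cannot be a Hopf ideal.

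What is missing is a valid argument that the loop group at each $y\neq x_0$ is trivial. The paper achieves this by considering the ideal $I=\left\langle g-e_y \mid s(g)=t(g)=y,\ y\neq x_0\right\rangle$: its generators satisfy $\varepsilon(g-e_y)=0$ and $\Delta(g-e_y)=g\otimes(g-e_y)+(g-e_y)\otimes e_y$, so $I$ genuinely is a weak Hopf ideal, and it annihilates $A$ because both $g$ and $e_y$ act by zero on $A=A_{x_0}$. Inner faithfulness then forces $g=e_y$ for every such loop, i.e.\ every vertex group away from $x_0$ is trivial. You need to replace your third step with this (or an equivalent) construction based on counit-zero combinations of grouplike elements; the decomposition $\kk\GG=\kk G\oplus\kk\GG'$ cannot do the job.
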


\begin{proof}
By Proposition~\ref{prop:hs=ht}, if $A$ is a $\kk \GG$-module algebra, then $A$ is $X$-decomposable where $X = \{1, \dots, n\}$ is the set of objects of $\GG$, that is, $A = \bigoplus_{i =1}^n A_i$. Since $A$ is a domain, this implies that exactly one of the $A_i$ is nonzero. Without loss of generality, suppose $A_1 \neq 0$.
Now consider the ideal of $\kk \GG$ defined by
\[
I = \left\langle g - e_i \mid g \in \GG \text{ such that } s(g) = t(g) = i,\; 2 \leq i \leq n \right\rangle.
\]
It is straightforward to check that $I$ is in fact a weak Hopf ideal of $\kk \GG$.
Further, observe that if $a \in A_1$ and $g - e_i$ is a generator of $I$ (so $i \neq 1$), then we have $(g - e_i) \cdot a = 0$. Hence, $I \cdot A_1 = 0$, whence $I \cdot A$ = 0. Since $A$ is inner faithful, this implies that for $2 \leq i \leq n$, the only element $g \in \GG$ satisfying $s(g) = t(g) = i$ is the trivial path $e_i$. This shows that $\GG$ is a disjoint union of trivial groups, together with a possibly nontrivial group at vertex 1.
\end{proof}


\subsection{Key categories}
\label{sec:keycat}

Next, we define in detail some categories that we first mentioned in Notation~\ref{not:cats} and \ref{not:cats2}.

\begin{definition}[$\XAlg$, $\XWHA$, $\XGrAlg$]\label{def:xxx}
We define the following categories.

\begin{enumerate}[label=\normalfont(\alph*)]
\item Let $\XAlg$ be the category defined as follows:
		 
\begin{itemize}
\item The objects are unital $\kk$-algebras $A$ with a given set $\{e_x^A\}_{x\in X}$ of nonzero orthogonal idempotents such that $1_A = \sum_{x\in X} e_x^A$; we call these \emph{$X$-algebras}.
	 
\item The morphisms are unital $\kk$-algebra maps $f: A \to B$ such that $f(e_x^A) = e_x^B$ for every $x\in X$; we call these maps \emph{$X$-algebra morphisms}.
\end{itemize}
		 
\item Let $\XWHA$ be the category defined as follows:
	 
\begin{itemize}
\item The objects are weak Hopf algebras $H$ with a given set $\{e_x^H\}_{x \in X}$ of nonzero orthogonal
idempotents such that $1_H = \sum_{x \in X} e_x^H$, $\Delta(e_x^H) = e_x^H \otimes e_x^H$, and $\ep(e_x^H) = 1_\kk$ for all $x \in X$; we call these \emph{$X$-weak Hopf algebras}.
	 
\item The morphisms are unital weak Hopf algebra morphisms $f: H \to H'$ such that $f(e_x^H) = e_x^{H'}$ for all $x \in X$; we call these maps \emph{$X$-weak Hopf algebra morphisms}.
\end{itemize}	

\item Let $\XGrAlg$ be the category of $X$-groupoid algebras, that is, the full subcategory of $\XWHA$ consisting of groupoid algebras $\kk \GG$ over $X$-groupoids $\GG$.
\end{enumerate}
\end{definition}

\begin{remark} \label{rem.xweak}
\begin{enumerate}[label=\normalfont(\alph*)]
\item Every $X$-decomposable $\kk$-algebra is an $X$-algebra, and every morphism of $X$-decomposable $\kk$-algebras is an $X$-algebra morphism. However, as exemplified in Remark~\ref{rem:localunits}, the converse is not necessarily true, since the idempotents of an $X$-algebra are not required to be central.
\item We have that $\XWHA$ is a subcategory of $\XAlg$.
 
\item By Proposition~\ref{prop:hs=ht}(a), a weak Hopf algebra $H$ satisfying $H_s = H_t$ is an $X$-weak Hopf algebra, where $X$ is the complete set of primitive idempotents of $H$.
 
\item By \cite[Proposition 2.3.3]{NV}, weak Hopf algebra morphisms preserve counital subalgebras. Hence, by considering all possible finite sets $X$, our results pertain to all weak Hopf algebras with commutative counital subalgebras, and all morphisms between such weak Hopf algebras.
 
\item We remark that the category $\mathsf{wha}$ considered in \cite{BGL} has a weaker notion of morphisms than those in $\XWHA$ here. The morphisms in $\mathsf{wha}$ need not be weak Hopf algebra morphisms (in particular, they are not necessarily $\kk$-algebra morphisms; see \cite[Theorem 4.12]{BGL}).

\item Similar to what was stated for $X$-groupoids in Remark~\ref{rem:permutation}, we could define a weaker notion of $X$-algebra morphism by allowing for permutations of the index set $X$. However, we opted not to do so in order to maintain simplicity in our proofs and because a suitable relabeling of the idempotents would ultimately reduce the situation to our case.
\end{enumerate}
\end{remark}

\begin{example}\label{ex:groupoidalgXWHA}
For an $X$-groupoid $\GG$, the groupoid algebra $\kk \GG$ belongs to $\XWHA$.
\end{example}

\begin{example} \label{ex:GGrep}
Let $\GG$ be an $X$-groupoid. Then, an $X$-decomposable vector space $V = \bigoplus_{x \in X} V_x$ is a representation of $\kk\GG$ if and only if the representation map $\kk\GG \to \End(V)$ is a morphism in $\XAlg$.
\end{example}

\begin{lemma}\label{lem:xalgmap}
	Let $A,B \in \XAlg$ with respective sets of idempotents $\{e_x^A\}_{x\in X}$ and $\{ e_x^B \}_{x\in X}$. If $f: A \to B$ is an $X$-algebra map and $a \in A$ is a local unit in $A$, then $f(a)$ is a local unit in $B$.
\end{lemma}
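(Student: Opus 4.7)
The plan is to verify directly from the definitions that $f(a)$ satisfies all the requirements of being a local unit in $B$, and in fact to show that $f(\widehat{a})$ serves as its local inverse. There is nothing subtle here: the result is essentially forced by the combined hypotheses that $f$ is multiplicative and that it preserves the chosen idempotents.

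First I would fix notation: suppose $a$ is a local unit in $A$ belonging to $e_y^A A e_x^A$ for some $x, y \in X$, with local inverse $\widehat{a} \in e_x^A A e_y^A$ satisfying $a\widehat{a} = e_y^A$ and $\widehat{a}a = e_x^A$. Since $f$ is an $X$-algebra map, it sends $e_x^A \mapsto e_x^B$ and $e_y^A \mapsto e_y^B$, and both of these targets are nonzero by the definition of $X$-algebra in Definition~\ref{def:xxx}(a).

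Next I would apply $f$ to the expressions $a = e_y^A a e_x^A$ and $\widehat{a} = e_x^A \widehat{a} e_y^A$ and use multiplicativity together with preservation of the idempotents to conclude $f(a) \in e_y^B B e_x^B$ and $f(\widehat{a}) \in e_x^B B e_y^B$. Then the two identities
\[
f(a)\,f(\widehat{a}) \;=\; f(a\widehat{a}) \;=\; f(e_y^A) \;=\; e_y^B, \qquad f(\widehat{a})\,f(a) \;=\; f(\widehat{a}a) \;=\; f(e_x^A) \;=\; e_x^B
\]
exhibit $f(a)$ as a local unit in $B$ with local inverse $f(\widehat{a})$, as required.

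Since every step is a direct consequence of $f$ being a unital algebra map that matches up the two distinguished sets of idempotents, there is no genuine obstacle; the only thing to be careful about is invoking the $X$-algebra axiom (as opposed to merely the algebra map axiom) at the point where one needs $f(e_x^A) = e_x^B$ and $f(e_y^A) = e_y^B$, which is precisely what makes $f(a)$ land in the correct corner $e_y^B B e_x^B$ of $B$.
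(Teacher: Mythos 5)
Your proof is correct and is essentially identical to the paper's: both verify $f(a) = f(e_y^A a e_x^A) \in e_y^B B e_x^B$ using that $f$ preserves the distinguished idempotents, and then check $f(a)f(\widehat{a}) = f(e_y^A) = e_y^B$ and $f(\widehat{a})f(a) = e_x^B$, so that $f(\widehat{a})$ is the local inverse of $f(a)$. No issues.
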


\begin{proof}
	Let $a \in e_y^A A e_x^A$ for some $x,y\in X$. Then, $f(a)=f(e_y^A a e_x^A) = e_y^B f(a) e_x^B \in e_y^B B e_x^B$. Moreover, $f(a) f(\widehat{a}) = f( a \widehat{a} ) = f(e_y^A) = e_y^B$, and similarly, $f(\widehat{a} ) f(a) = e_x^B$.
\end{proof}


\subsection{Module algebras over groupoid algebras}
\label{sec:gpdmodalg}

When $G$ is a group, the group algebra $\kk G$ is a Hopf algebra. The functor $\kk(-): \Group \to \Algcat$, which associates to the group $G$ its group algebra $\kk G$, is left adjoint to the functor $(-)^{\times}:\Algcat \to \Group$, which associates to a $\kk$-algebra $B$ its group of units $B^\times$. Since $\kk G$ is actually a Hopf algebra, we may also view the group algebra functor as a functor $\kk(-):\Group \to \Hopf$. In this case, it is left adjoint to the functor $\Gamma(-):\Hopf \to \Group$, which associates to the Hopf algebra $H$ its group of grouplike elements $\Gamma(H)$. We generalize these two adjunctions to the groupoid case, recovering the classical setting when $|X| = 1$.

\begin{theorem} \label{thm:repkGrp}
Let $X$ be a finite nonempty set.
	\begin{enumerate}[label=\normalfont(\alph*)]
		\item The following functors are well-defined: 
		\[
		\begin{array}{l}
			\kk(-): \XGrpd \longrightarrow \XAlg \qquad \text{(groupoid algebra);}\\ 
			(-)^{\times}_X: \XAlg \longrightarrow \XGrpd \qquad \text{(groupoid of local units).}
		\end{array}
		\]
		Moreover, $\kk(-) \dashv (-)^{\times}_X$, that is, for an $X$-groupoid $\GG$ and an $X$-algebra $B$, we have a bijection that is natural in each slot: 
		\[
	\Hom_{\XGrpd}(\GG, B^{\times}_X ) \; \cong \; \Hom_{\XAlg}(\Bbbk \GG, B).
		\]

		\item 
	 The following functors are well-defined:
		\[
		\begin{array}{l}
			\kk(-): \XGrpd \longrightarrow \XWHA \qquad \text{(groupoid algebra);}\\ 
			\Gamma(-): \XWHA \longrightarrow \XGrpd \qquad \text{(groupoid of grouplike elements).}
		\end{array}
		\]
		Moreover, $\kk(-) \dashv \Gamma(-)$, that is, for an $X$-groupoid $\GG$ and an $X$-weak Hopf algebra $H$, we have a bijection that is natural in each slot: 
		\[
	\Hom_{\XGrpd}(\GG, \Gamma(H)) \; \cong \; \Hom_{\XWHA}(\Bbbk \GG, H).		\]
	In particular, $\Gamma(\kk \GG) = \GG$. 
	\end{enumerate}
\end{theorem}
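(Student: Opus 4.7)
The plan is to prove both parts of Theorem~\ref{thm:repkGrp} by the same template: verify that each functor is well-defined, then construct the adjunction bijection by linearly extending (in one direction) and restricting to the distinguished basis elements (in the other). The group case of Proposition~\ref{prop:repkG} serves as a guide, with extra bookkeeping arising because morphisms in a groupoid compose only along matching objects.

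For part (a), I first check that $\kk\GG \in \XAlg$: the elements $\{e_x\}_{x\in X}$ form a complete set of orthogonal idempotents by Definition~\ref{def:grpdalg}, and any $X$-groupoid homomorphism extends linearly to an $X$-algebra map. Well-definedness of $(-)^\times_X$ on morphisms is precisely Lemma~\ref{lem:xalgmap}. For the bijection, given $\pi\colon \GG \to B^\times_X$, set $\widetilde\pi\colon \kk\GG \to B$ to be the linear extension of $\pi$ on the basis $\GG_1$. The only nontrivial multiplicative check is that $\widetilde\pi(g)\widetilde\pi(h)=0$ in $B$ when $s(g)\neq t(h)$; this follows from $\widetilde\pi(g)\in e^B_{t(g)}B e^B_{s(g)}$, $\widetilde\pi(h)\in e^B_{t(h)}B e^B_{s(h)}$, and the orthogonality of the $e^B_x$. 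Conversely, for an $X$-algebra map $\widetilde\pi\colon\kk\GG\to B$, the relations $e_{t(g)}g e_{s(g)} = g$ and $gg^{-1}=e_{t(g)}$, $g^{-1}g=e_{s(g)}$ in $\kk\GG$ translate under $\widetilde\pi$ into $\widetilde\pi(g)\in e^B_{t(g)}B e^B_{s(g)}$ with two-sided local inverse $\widetilde\pi(g^{-1})$, so $\pi := \widetilde\pi|_{\GG_1}$ defines an $X$-groupoid map to $B^\times_X$. The two constructions are visibly mutually inverse, and naturality in both slots is immediate from the fact that both are given by linear extension/restriction.

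For part (b), the same linear extension/restriction works but with additional coalgebra and antipode structure to track. Well-definedness of $\kk(-)\colon\XGrpd\to\XWeak$ uses Example~\ref{ex:kGweak}, noting that each $e_x\in\kk\GG$ satisfies $\Delta(e_x)=e_x\otimes e_x$ and $\varepsilon(e_x)=1_\kk$. For $\Gamma(-)\colon\XWeak\to\XGrpd$ to land in $\XGrpd$, I interpret $\Gamma(H)$ as the full subgroupoid of Definition~\ref{def:groupoidGamma} restricted to the object set $\{e^H_x\}_{x\in X}$; this makes sense since each $e^H_x$ is a grouplike idempotent in $H_s\cap H_t$ (hence lies in $\Gamma(H)_0$ by Lemma~\ref{lem:GammaH3}), and the full subgroupoid is closed under composition and inversion. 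Given $\pi\colon\GG\to\Gamma(H)$, its linear extension $\widetilde\pi$ is an algebra map by the argument in (a), a coalgebra map since each $\pi(g)$ is grouplike, and intertwines the antipodes since $S(\pi(g))=\pi(g)^{-1}=\pi(g^{-1})$ by Definition~\ref{def:groupoidGamma}. Conversely, given an $X$-weak Hopf algebra map $\widetilde\pi\colon\kk\GG\to H$, preservation of $\Delta$ and $\varepsilon$ shows $\widetilde\pi(g)\in\Gamma(H)_1$; and because weak bialgebra maps preserve $\varepsilon_s,\varepsilon_t$, while $\varepsilon_s(g)=e_{s(g)}$ and $\varepsilon_t(g)=e_{t(g)}$ in $\kk\GG$, the source and target of $\widetilde\pi(g)$ in $\Gamma(H)$ lie in $\{e^H_x\}_{x\in X}$. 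Specializing this bijection to $H=\kk\GG$ and $\widetilde\pi=\id$ yields $\Gamma(\kk\GG)=\GG$.

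The main obstacle is the object-set bookkeeping for $\Gamma(-)$: since $\Gamma(H)_0$ may be strictly larger than the distinguished set $\{e^H_x\}_{x\in X}$, the functor must be interpreted as taking the full subgroupoid on those distinguished objects, and one must verify that $\XWeak$-morphisms respect this restriction. Once this convention is fixed—and the compatibility $\widetilde\pi(\varepsilon_s(g))=\varepsilon_s(\widetilde\pi(g))=e^H_{s(g)}$ is used to pin down the source and target of the image—everything else reduces to routine verifications that parallel the group-algebra adjunctions of Proposition~\ref{prop:repkG}.
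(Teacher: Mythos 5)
Your proposal follows essentially the same route as the paper: both adjunctions are built by linearly extending an ($X$-preserving) groupoid map on the basis $\GG_1$ in one direction and restricting an algebra map to the local units, resp.\ grouplike elements, in the other, with Lemma~\ref{lem:xalgmap} giving functoriality of $(-)^\times_X$ and Example~\ref{ex:kGweak} giving that $\kk\GG\in\XWeak$. Two small remarks. First, the workaround you introduce for $\Gamma(-)$ --- passing to the full subgroupoid on the distinguished objects because ``$\Gamma(H)_0$ may be strictly larger than $\{e_x^H\}_{x\in X}$'' --- is unnecessary: for $H\in\XWeak$ the conditions $1_H=\sum_{x}e_x^H$ and $\Delta(e_x^H)=e_x^H\otimes e_x^H$ force $H_s=H_t=\Span_{\kk}\{e_x^H\}_{x\in X}$, so Lemma~\ref{lem:GammaH3} (equivalently, Proposition~\ref{prop:hs=ht}(a)) yields $\Gamma(H)_0=\{e_x^H\}_{x\in X}$ on the nose, which is how the paper concludes $\Gamma(H)\in\XGrpd$ without any restriction. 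Second, ``specializing the bijection to $H=\kk\GG$ and $\widetilde\pi=\mathrm{id}$'' only produces the unit map $\GG\to\Gamma(\kk\GG)$, i.e., the inclusion of $\GG$ as a subgroupoid of $\Gamma(\kk\GG)$; to get the stated equality $\Gamma(\kk\GG)=\GG$ one must still show that every grouplike element of $\kk\GG$ already lies in $\GG_1$, which the paper does by writing $a=\sum_{g}\alpha_g g$, comparing $\Delta(a)$ with $a\otimes a$, and invoking linear independence of grouplike elements. Neither point undermines your overall strategy, which matches the paper's.
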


\begin{proof}
	(a) First, we prove that $\kk(-): \XGrpd \to \XAlg$ is indeed a functor. Let $\GG$ be an $X$-groupoid. As mentioned in Remark~\ref{rem:localunits} and Example~\ref{exa:localunits}(c), $\kk \GG$ is an $X$-algebra with idempotent set $\{e_x^{\GG}\}_{x\in X}$ given by the identity morphisms of $\GG$, so $\kk(-)$ sends objects to objects. Also, if $\phi: \GG \to \GG'$ is an $X$-groupoid morphism, then we can consider $\phi$ as a function between the sets $\GG_1$ and $\GG'_1$ so the linear extension $\kk \phi: \kk \GG \to \kk \GG'$ is well-defined. 
	It is straightforward to check that $\kk \phi$ is an $X$-algebra map. Hence, $\kk(-)$ also sends morphisms to morphisms. Since $\kk \phi = \phi$ when restricted to basis elements, $\kk(-)$ respects compositions and identities.

	Secondly, we check that $(-)^{\times}_X: \XAlg \to \XGrpd$ is a functor. By construction, the groupoid of local units is an $X$-groupoid, so $(-)^{\times}_X$ sends objects to objects. Moreover, if $\psi:A \to B$ is an $X$-algebra map, we can define the $X$-groupoid map $\psi^{\times}_X: A^{\times}_X \to B^{\times}_X$ where $a: x\to y$ maps to $\psi(a): x \to y$ (which is a local unit by Lemma~\ref{lem:xalgmap}). So $\psi^{\times}_X$ fixes $X$ and it is a functor due to the properties of $\psi$. Also, $(-)^{\times}_X$ respects compositions and identities.

	Now, for any $X$-groupoid $\GG$ and any $X$-algebra $B$, we want to display a bijection
	\begin{equation*}
		\Hom_{\XGrpd}(\GG, B^{\times}_X ) \cong \Hom_{\XAlg}(\Bbbk \GG, B).
	\end{equation*}
	 Given an $X$-groupoid morphism $\phi \in \Hom_{\XGrpd}(\GG, B^{\times}_X )$, we consider it as a function $\phi : \GG_1 \to (B^{\times}_X)_1$, and construct the linear extension $\kk\phi : \kk \GG \to B$.  Note that for $g\in \GG_1$, $\kk\phi(g) = \phi(g)$, which implies that $\kk\phi$ is indeed an $X$-algebra map. Such verification uses that $1_B = \sum_{x\in X} e_x^B$. Hence, we have constructed the assignment
\[
		\Phi_{\GG,B}:\Hom_{\XGrpd}(\GG, B^{\times}_X ) \longrightarrow \Hom_{\XAlg}(\Bbbk \GG, B), \quad
		\phi \mapsto \kk\phi.
\]
	On the other hand, given an $X$-algebra map $ \psi \in \Hom_{\XAlg}(\Bbbk \GG, B)$, consider the restriction $\psi|_{\GG} : \GG \to B^{\times}_X$ given by $\psi|_{\GG}(g: x \to y) := \psi(g): x \to y$. As above, this assignment is well-defined due to Lemma~\ref{lem:xalgmap}. The fact that $\psi|_{\GG}$ is a functor follows from $\psi$ being an $X$-algebra map. Hence, we have constructed the assignment
\[
\Psi_{\GG,B}:\Hom_{\XAlg}(\Bbbk \GG, B) \longrightarrow \Hom_{\XGrpd}(\GG, B^{\times}_X ), \quad \psi \mapsto \psi|_{\GG}.
\]
	The following calculations show that these assignments are mutually-inverse. If $g: x \to y$ in $\GG_1$, then
	\begin{gather*}
		[( \Psi_{\GG,B} \circ \Phi_{\GG,B} ) ( \phi )] (g) = \Psi_{\GG,B} (\kk\phi) (g) = \kk\phi|_{\GG} (g) =\phi(g),\\
		[( \Phi_{\GG,B} \circ \Psi_{\GG,B}) (\psi)] (g) = \Phi_{\GG,B} (\psi|_{\GG})(g) = (\psi|_{\GG})' (g) = \psi(g).
	\end{gather*}
Finally, the bijection is natural since for any $X$-groupoid morphism $\varphi: \GG' \to \GG$, any $X$-algebra map $f: B \to B'$, and all $\psi \in \Hom_{\XAlg}(\Bbbk \GG, B)$ one can check the following equality:
\begin{equation*}
(	\Phi_{\GG',B'} \circ \Hom_{\XAlg}(\kk \varphi,f )) (\psi) = (\Hom_{\XGrpd} ( \varphi, f^{\times}_X ) \circ \Phi_{\GG,B} )( \psi ).
\end{equation*}
Here, $\Hom_{\XAlg}(\kk \varphi,f ) : \Hom_{\XAlg}(\kk\GG, B) \to \Hom_{\XAlg}(\kk \GG', B') $ is given by composition, i.e., $\psi \mapsto f \circ \psi \circ \kk \varphi $; a similar notion holds for $\Hom_{\XGrpd} ( \varphi, f^{\times}_X ) $.

\noindent (b) First, we show that $\kk(-): \XGrpd \to \XWHA$ is a functor. If $\GG$ is an $X$-groupoid, then $\kk \GG$ is a weak Hopf algebra. The set $\{e^{\kk \GG}_x\}_{x \in X}$ of identity morphisms of $\GG$ are idempotent elements of $\kk \GG$ satisfying $1_{\kk \GG} = \sum_{x \in X} e_x^{\kk \GG}$ and $\Delta(e_x^{\kk \GG}) = e_x^{\kk \GG} \otimes e_x^{\kk \GG}$ for each $x \in X$, and so $\kk \GG$ is an object of $\XWHA$. If $\phi: \GG \to \GG'$ is an $X$-groupoid morphism, then by part (a), $\kk\phi: \kk \GG \to \kk \GG'$ is a map of $X$-algebras. Since the elements of $\GG$ form a $\kk$-basis of grouplike elements for $\kk \GG$, and $\kk\phi$ maps these elements to grouplike elements of $\kk \GG'$, we have that $\kk\phi$ is also a $\kk$-coalgebra map. Since the antipode of $\kk \GG$ is defined by $S_{\kk \GG}(g) = g\inv$ for all $g \in \GG$, therefore $S_{\kk \GG'} \circ \kk\phi = \kk \phi \circ S_{\kk \GG}$, so $\kk \phi$ is a morphism in $\XWHA$.

Now we show that $\Gamma(-): \XWHA \to \XGrpd$ is a functor. Note that $\Gamma(-)$ is the restriction of the functor $\mathsf{g}(-): \mathsf{wha} \to \Grpd$ in \cite[Theorem 8.4]{BGL} to $\XWHA$, where the category $\mathsf{wha}$ has a weaker notion of morphism; see Remark~\ref{rem.xweak}(e). 
So, we need only check that if $H \in \XWHA$ then $\Gamma(H) \in \XGrpd$. Indeed, if $H \in \XWHA$, then $\Gamma(H)$ is a groupoid with object set $\{e_x^H\}_{x \in X}$, and so $\Gamma(H) \in \XGrpd$. We remark that if $f: H \to H'$ is a morphism in $\XWHA$, then $\Gamma(f)$ is the restriction of $f$ to $\Gamma(H)$.

Finally, for any $\GG \in \XGrpd$ and $H \in \XWHA$, we wish to exhibit a bijection
\[
\Hom_{\XGrpd}(\GG, \Gamma(H)) \cong \Hom_{\XWHA} (\kk \GG, H).
\]
Given $\phi \in \Hom_{\XGrpd}(\GG, \Gamma(H))$, the $\kk$-linearization $\kk \phi: \kk \GG \to \kk \Gamma(H)$ is a morphism of $X$-weak Hopf algebras. Since composition in $\Gamma(H)$ was defined as restriction of multiplication in $H$, the map $\nu_H: \kk \Gamma(H) \to H$ induced by the inclusion $\Gamma(H) \subseteq H$ is a unital $\kk$-algebra map. Also, the elements of $\Gamma(H)$ are grouplike in $H$, so it is also a $\kk$-coalgebra map. Moreover, the inverse in $\Gamma(H)$ coincides with the antipode of $H$, so $\nu_H$ intertwines the antipodes of $\kk \Gamma(H)$ and $H$. Therefore, $\nu_H \circ \kk \phi \in \Hom_{\XWHA}(\kk \GG, H)$, and we have the assignment
\[
\Phi_{\GG, H}: \Hom_{\XGrpd}(\GG, \Gamma(H)) \longrightarrow \Hom_{\XWHA} (\kk \GG, H), \quad \phi \mapsto \nu_H \circ \kk \phi.
\]
Conversely, if $\psi \in \Hom_{\XWHA}(\kk \GG, H)$, then $\Gamma(\psi): \Gamma(\kk \GG) \to \Gamma(H)$ is an $X$-groupoid morphism. Note that $\Gamma(\kk \GG) = \GG$. Indeed, it is clear that the elements of $\GG$ are grouplike elements of $\kk \GG$. On the other hand, any element of $\kk \GG$ is of the form $a = \sum_{g \in \GG} \alpha_g g$, where
\[
a \otimes a = \textstyle \left(\sum_{g \in \GG} \alpha_g g\right) \otimes \left(\sum_{h \in \GG} \alpha_h h \right) = \sum_{g,h \in \GG} \alpha_g \alpha_h (g \otimes h)
\]
and $\Delta(a) = \sum_{g \in \GG} \alpha_g (g \otimes g)$. By the linear independence of grouplike elements, if $a \in \Gamma(\kk \GG)$ we have that $\alpha_g \alpha_h = \delta_{g,h}\alpha_g$, whence exactly one $\alpha_g = 1$ and the remaining coefficients are $0$. Hence, the only grouplike elements of $\kk \GG$ are the elements of $\GG$. We have therefore constructed an assignment
\[
\Psi_{\GG, H}:\Hom_{\XWHA} (\kk \GG, H) \longrightarrow \Hom_{\XGrpd}(\GG, \Gamma(H)), \quad \psi \mapsto \Gamma(\psi) \circ \eta_{\GG},
\]
where $\eta_{\GG}: \GG \to \Gamma(\kk \GG)$ is the identity map.

It is straightforward to check that $\Phi_{\GG, H}$ and $\Psi_{\GG, H}$ are mutually-inverse. Naturality holds by \cite[Theorem~8.4]{BGL}.
\end{proof}

\begin{corollary}\label{cor:repiso}
Let $\GG$ be an $X$-groupoid. Then, the categories $\GGrep$ and $\rep(\kk \GG)$ are monoidally isomorphic.
\end{corollary}

\begin{proof}
Let $V = \bigoplus_{x \in X} V_x$ be an $X$-decomposable vector space and $B = \End(V)$. Then, $B_{X}^{\times} = \GL_{X}(V)$, and from Example~\ref{ex:GGrep} and Theorem~\ref{thm:repkGrp}(a) we have
\begin{equation}\label{eq:repGG}
    \Hom_{\XGrpd}(\GG, \GL_{X}(V)) \cong \Hom_{\XAlg}(\Bbbk \GG, \End(V)).
\end{equation}
Consider the functor $F:\GGrep \to \rep(\kk \GG)$ that sends a $\GG$-representation $(V,\pi)$ to the representation $(V,\widetilde{\pi})$, where $\widetilde{\pi}$ is the $X$-algebra morphism corresponding to the $X$-groupoid morphism $\pi$ through the correspondence \eqref{eq:repGG}. Moreover, given $\GG$-representations $(V,\pi)$ and $(W,\eta)$, if $\varphi = \{\varphi_x:V_x \to W_x\}_{x\in X}$ is a morphism of $\GG$-representations, then define $F(\varphi)=\bigoplus_{x\in X} \varphi_x$. Clearly, $F$ is an isomorphism of categories.

Define $F_{V, W}:F(V)\otimes_{\rep(\kk \GG)} F(W)\to F(V\otimes_{\GGrep}W)$ as $F_{V, W}=\bigoplus_{x\in X} \id_{V_x\otimes W_x}$. The collection $\{F_{V,W}\}_{V,W\in\GGrep}$ forms a natural isomorphism. Additionally, consider the morphism $F_0:\unit_{\rep(\kk \GG)}\to F(\unit_{\GGrep})$ in $\rep(\kk \GG)$ given by $F_0=\bigoplus_{x\in X}\id_{\kk}$, which is invertible. Through straightforward verification, it can be checked that $F$ satisfies the associativity and unit constraints, proving it is a strong monoidal functor.
\end{proof}

This result, combined with Lemmas~\ref{lem:groupoidrep} and \ref{lem:kGGmod}, establishes the following sequence of monoidal isomorphisms between categories:
\[\rep(\kk \GG) \cong \rep(\GG) \cong \GGmod \cong \kk\GGmod.\]
For a group $G$, Corollary~\ref{cor:repiso} recovers the classical monoidal isomorphism between $\rep(G)$ and $\rep(\kk G)$, achieved by linearizing the representation map. Furthermore, it leads to the familiar monoidal isomorphisms: $\rep(\kk G) \cong \rep(G) \cong G\lmod \cong \kk G\lmod$.

Finally, using the adjunction in Theorem~\ref{thm:repkGrp}(b) along with Proposition~\ref{prop:groupoidalg}, we can identify the symmetry object object $\Sym_{\XGrAlg}(A)$ for an $X$-decomposable $\kk$-algebra $A$. Here, it is important to note that for an $X$-groupoid $\GG$, $\kk\GG$ is a weak Hopf algebra, so when we refer to a $\kk\GG$-module algebra, we mean a $\kk$-algebra satisfying condition \eqref{eq.Hmodalgebra} in Proposition~\ref{def:modulealgoverWHA}.

\begin{proposition} \label{prop:kGrpd}
Let $A = \bigoplus_{x\in X} A_x$ be an $X$-decomposable $\kk$-algebra and let $\GG$ be an $X$-groupoid. Then:
\begin{enumerate}[label=\normalfont(\alph*)]
\item The action of $\XAutA$ on $A$ can be extended linearly to make $A$ a $\kk(\XAutA)$-module algebra. For the action map $\ell_{\kk(\XAutA), A}: \kk (\XAutA) \otimes A \to A$, we use the notation $h \rt a := \ell_{\kk(\XAutA), A}(h \otimes a)$ for $h \in \kk(\XAutA)$ and $a \in A$.

\item Suppose that $A$ is a $\kk \GG$-module algebra via $\ell_{\kk \GG, A}$, where we write $h \cdot a$ for $\ell(h \otimes a)$ for $h \in \kk \GG$ and $a \in A$. Then there is a unique $X$-weak Hopf algebra morphism $\widetilde{\pi}: \kk\GG \to \kk(\XAutA)$ such that $h \cdot a = \widetilde{\pi}(h) \rt a$ for all $h \in \kk\GG$ and $a \in A$.

\item Every $X$-weak Hopf algebra morphism $\widetilde{\pi}: \kk \GG \to \kk(\XAutA)$ gives $A$ the structure of a $\kk\GG$-module algebra via $\ell_{\kk\GG,A}(h \otimes a) = \ell_{\kk(\XAutA), A}(\widetilde{\pi}(h) \otimes a)$ for all $h \in \kk \GG$ and $a \in A$ (that is, $h \cdot a = \widetilde{\pi}(h) \rt a$). 
\end{enumerate}
Hence $\Sym_{\XGrAlg}(A) = \kk(\XAutA)$.
\end{proposition}

\begin{proof}
(a) Since the isomorphism $\kk\GG\lmod \cong \GGmod$ of Lemma~\ref{lem:kGGmod} is monoidal for any groupoid $\GG$, the notions of a $\GG$-module algebra and a $\kk\GG$-module algebra are equivalent. Therefore, since $A$ is an $\XAutA$-module algebra (Proposition~\ref{prop:groupoidalg}), by linearizing this action, $A$ becomes a $\kk(\XAutA)$-module algebra.

(b, c) By the above and Proposition~\ref{prop:groupoidalg}, $A$ is a $\kk(\XAutA)$-module algebra if and only if the action of $\GG$ on $A$ factors through a unique $X$-groupoid morphism $\pi: \GG \to \XAutA$. Now, letting $H = \kk(\XAutA)$ in Theorem~\ref{thm:repkGrp}(b), we obtain a bijection: 
\[
\Hom_{\XGrpd}(\GG, \XAutA) \cong \Hom_{\XAlg}(\kk\GG, \kk(\XAutA)), \quad \pi \mapsto \widetilde{\pi}.
\]
Hence, there exists an $X$-groupoid morphism $\pi: \GG \to \XAutA$ if and only if there exists an $X$-weak Hopf algebra morphism $\widetilde{\pi}: \kk\GG \to \kk(\XAutA)$ (which is simply the $\kk$-linearization of $\pi$). 

Note that since $\XGrAlg$ is a full subcategory of $\XWHA$, the morphisms in $\XGrAlg$ are simply $X$-weak Hopf algebra morphisms, which, as per Theorem~\ref{thm:repkGrp}(b), are all induced by $X$-groupoid morphisms. Consequently, (a), (b), and (c) imply that $\Sym_{\XGrAlg}(A) = \kk(\XAutA)$.
\end{proof}

\begin{remark}
	Proposition~\ref{prop:kGrpd} has an alternative presentation appearing in \cite[Proposition~2.2]{PF13} using the language of partial actions. 
\end{remark}

 We recover the following well-known result for actions of group algebras, which is the special case of Proposition~\ref{prop:kGrpd} when $|X| = 1$.

\begin{remark} \label{rem:kGrp}
Let $A$ be a $\kk$-algebra. Then $A$ is a $\kk (\AutA)$-module algebra and for a group $G$, the following are equivalent.
	\begin{enumerate}[label=\normalfont(\alph*)]
		\item $A$ is a $\kk G$-module algebra.
		\item There exists a Hopf algebra morphism $\widetilde{\pi}: \kk G \to \kk(\AutA)$.
	\end{enumerate}
Hence $\Sym_{\GrAlg}(A) = \kk(\AutA)$.
\end{remark}


\section{Actions of $X$-Lie algebroids on algebras} \label{sec:Liealgebroids}

The definition of a Lie algebroid was introduced in \cite{pradines1967}, using the language of vector bundles over manifolds (see also \cite[Chapter~III]{MacK}). An equivalent, purely algebraic structure known as a Lie--Rinehart algebra, was introduced in \cite{rinehart1963}.
Although some Hopf-like structures have been defined for general Lie--Rinehart algebras by means of Hopf algebroids (see \cite{Saracco20,Saracco21}), the study of their actions on $\kk$-algebras is still an open problem. The focus of this section is to study actions of a special subclass of Lie algebroids (Definition~\ref{def:Liealgebroid}); the main result here is Proposition~\ref{prop:Liealgebroidalg}. We also extend this result for universal enveloping algebras of these Lie algebroids (Definition~\ref{def:envelopLiealgebroid}) in Proposition~\ref{prop:U(GLie)}.

As in previous sections, here $X$ denotes a finite non-empty set.

\begin{definition}\label{def:Liealgebroid}
An \emph{$X$-Lie algebroid $\GLie$} is a direct sum of vector spaces
\[
\GLie := \textstyle \bigoplus_{x \in X} \mf{g}_x
\]
where $\mf{g}_x$ has structure of Lie algebra for all $x\in X$. We regard $\GLie$ as having a partially defined bracket $[-,-]$, which is only defined on pairs of elements from the same component $\mf{g}_x$. Namely, $[a, b]=[a, b]_{\mf{g}_x}$ if $a,b \in \mf{g}_x$ for some $x\in X$, and is undefined otherwise.

Let $\GLie= \bigoplus_{x \in X} \mf{g}_x$ and $\mf{G'}= \bigoplus_{x \in X} \mf{g}'_x$ be two $X$-Lie algebroids. A linear map $\tau: \GLie \to \GLie'$ is an \emph{$X$-Lie algebroid morphism} if there exists a collection $\{ \tau_x: \mf{g}_x \to \mf{g}^{\prime}_x\}_{x\in X}$ of Lie algebra morphisms such that $\tau|_{\mf{g}_x} = \tau_x$ for all $x \in X$. Here, we write $\tau = \{\tau_{x}\}_{x \in X}$.
\end{definition}

As remarked in \cite[Section 3.2]{NDthesis}, an $X$-Lie algebroid is an special kind of Lie algebroid in the sense of \cite{MacK}. Furthermore, one could propose a more general definition of $X$-Lie algebroid morphism by allowing the index set $X$ to permute. However, as noted in previous remarks, for simplicity, we do not consider this case.

\begin{definition}[$\XLie$]
\label{def:XLie}
The $X$-Lie algebroids together with $X$-Lie algebroid morphisms form a category, which we denote by $\XLie$.
\end{definition} 

Now, we present some examples of $X$-Lie algebroids. Recall that for a given $\kk$-algebra $A$ we can always define on $A$ a Lie bracket given by $[a,b] = ab - ba$, for all $a,b\in A$. We denote this associated Lie algebra structure on $A$ by $\mathcal{L}(A)$. 

\begin{example}
    \label{ex:LieXAlg}
    Let $A$ be an $X$-algebra with given set $\{e_x^A\}_{x \in X}$ of nonzero orthogonal idempotents. For $x\in X$ we define $A_x:= e_x^A A e_x^A$, which is a $\kk$-algebra with the multiplication of $A$ and identity element $e_x^A$. Hence $\mathcal{L}_X(A):= \bigoplus_{x\in X} \mathcal{L}(A_x)$ is an $X$-Lie algebroid, called the \emph{$X$-Lie algebroid associated to $A$}. There are two particular examples of this construction.
    \begin{enumerate}[label=\normalfont(\alph*)]
    \item When $|X|=1$ then $\mathcal{L}_X(A)=\mathcal{L}(A)$.
    \item If $A = \bigoplus_{x \in X} A_x$ is an $X$-decomposable $\kk$-algebra, then $\mathcal{L}_X(A) = \bigoplus_{x \in X} \mathcal{L}(A_x)$ is simply the direct sum of the classical associated Lie algebra structures on each component $A_x$.
    \end{enumerate}
\end{example}

\begin{example}
\label{ex:primitiveWHA}
    Let $H$ be an $X$-weak Hopf algebra with given set $\{e_x^H\}_{x \in X}$ of nonzero orthogonal idempotents. For $x\in X$, we say that $h\in H$ is an \emph{$x$-primitive element} of $H$ if $\Delta(h)=e_x^H \otimes h + h \otimes e_x^H$. This implies $\varepsilon(h)=0$ and $S(h)=-h$. If we denote by $P_x(H)$ the set of $x$-primitive elements of $H$, then it is easy to check that $P_x(H)$ becomes a Lie algebra under the commutator bracket $[h,h']=hh'-h'h$, for all $h,h'\in P_x(H)$. Hence, $P_X(H):=\bigoplus_{x\in X} P_x(H)$ is an $X$-Lie algebroid, called the \emph{$X$-Lie algebroid of primitive elements of $H$}. When $|X|=1$ then $P_X(H)=P(H)$, the classical Lie algebra of primitive elements of $H$.
\end{example}

 Throughout this section, $\GLie= \bigoplus_{x \in X} \mf{g}_x$ denotes an $X$-Lie algebroid. Next, we briefly discuss modules and representations over $X$-Lie algebroids. 

\begin{definition}[$\GLie$-module]\label{def:Liealgebroidmod}
 An $X$-decomposable vector space $V=\bigoplus_{x \in X} V_x$ is said to be a left \emph{$\GLie$-module} if $V_x$ is a $\mf{g}_x$-module for each $x\in X$.
 
 Given two left $\GLie$-modules $V=\bigoplus_{x \in X} V_x$ and $W=\bigoplus_{x \in X} W_x$, a linear map $f: V \to W$ is a \emph{$\GLie$-module morphism} if there exists a collection $\{ f_x : V_x \to W_x\}_{x\in X}$ of maps such that $f|_{V_x}=f_x$ and $f_x$ is a $\mf{g}_x$-module morphism for each $x\in X$. Here, we write $f= (f_x)_{x \in X}$. 
 \end{definition}
 
The composition of two $\GLie$-module morphisms is defined component-wise (and is again a $\GLie$-module morphism). Given $\GLie$-representations $(V,\tau), (W, \omega), $

\begin{lemma}[$\GLie\lmod$] \label{lem:Gliemod}
 Let $\GLiemod$ be the category of $\GLie$-modules with $\GLie$-module morphisms. This category inherits a monoidal structure from the monoidal structures of each $\mf{g}_x \lmod$ as follows.
 \begin{itemize}
     \item If $V=\bigoplus_{x \in X} V_x,W=\bigoplus_{x \in X} W_x \in \GLiemod$, then
     \[V \otimes_{\GLiemod} W:= \bigoplus_{x \in X} (V_x \otimes_{\mf{g}_x\lmod} W_x) = \bigoplus_{x \in X} (V_x \otimes W_x),\]
     \item $\unit_{\GLiemod} = \bigoplus_{x \in X} \unit_{\mf{g}_x\lmod} = \bigoplus_{x \in X} \kk$.
 \end{itemize}
If $f=(f_x)_{x \in X} : V \to W$ and $f'=(f'_x)_{x \in X} : V' \to W'$ are two $\GLie$-module morphisms, then $f \otimes_{\GLiemod} f'  = (f_x \otimes f'_x)_{x\in X}$. The associativity constraint is that induced by the tensor product of components, and the left/right unital constraints are given by scalar multiplication.
\end{lemma}

\begin{remark}
\label{GLieboxtimes}
For a $X$-Lie algebroid $\GLie = \bigoplus_{x \in X} \mf{g}_x$ and an $X$-decomposable vector space $V = \bigoplus_{x \in X} V_x$, in the language of Remark~\ref{def:catHopflike}, we define
\[
\GLie \boxtimes_{\XLie} V = \{(p, v) \in \GLie \times V \mid p \in \mf{g}_x, v \in V_{x}\}.
\]
Then a left action of $\GLie$ on $V$ can be viewed as a map $\GLie \boxtimes_{\XLie} V \to V$.
\end{remark}

Now we introduce a generalization of the Lie algebra $\mf{gl}(V)$.

\begin{definition}[$\mf{GL}_X(V)$, $\mf{GL}_{(d_1, \dots, d_n)}(\kk)$]\label{def:GLLie1}
 Let $V=\bigoplus_{x\in X} V_x$ be an $X$-decomposable vector space. The \emph{$X$-general linear Lie algebroid} of $V$, denoted $\mf{GL}(V)$, is the $X$-Lie algebroid $\mf{GL}_X(V) := \bigoplus_{x\in X} \mf{gl}(V_x)$. If $X = \{1, \dots, n\}$ and $V_i$ has dimension $d_i$, then we also use the notation $\mf{GL}_{(d_1, \dots, d_n)}(\kk)$ where $d_1 \leq d_2 \leq \dots \leq d_n$ for $\mf{GL}_X(V)$.
\end{definition}

\begin{definition}[Representation of $\GLie$]\label{def:Liealgebroidrep}
 A \emph{representation} of $\GLie$ is an $X$-decomposable vector space $V= \bigoplus_{x\in X} V_x$ equipped with an $X$-Lie algebroid morphism $\tau: \GLie \to \mf{GL}_X(V)$, called the \emph{representation map} of $V$. We often denote this as $(V,\tau)$. 
 
 Given two $\GLie$-representations $(V,\tau),(W,\omega)$, a morphism of $\GLie$-representations is a family of linear maps $\varphi=\{ \varphi_x:V_x \to W_x \}$ such that $\varphi_x$ is a morphism of $g_x$-representations for every $x\in X$, that is, $\omega_x(v)\varphi_x = \varphi_x \tau_x(v)$ for every $v\in V_x$ and $x\in X$.
\end{definition}

\begin{lemma}[$\rep(\GLie)$]
\label{lem:GLierep}
Let $\rep(\GLie)$ be the category of $\GLie$-representations. This category admits a monoidal structure as follows.
\begin{itemize}
    \item If $(V,\tau),(W,\omega) \in \rep(\GLie)$, then $V \otimes_{\rep(\GLie)} W = \bigoplus_{x\in X} V_x \otimes W_x$; the representation map $\GLie \to \mf{GL}_X(V \otimes_{\rep(\GLie)} W)$ is given by $p \mapsto \tau_x(p) \otimes \omega_x(p)$ for every $p\in \mf{g}_x$ and $x\in X$.
    \item $\unit_{\rep(\GLie)} = \bigoplus_{x\in X} \kk$; the representation map $\GLie \to \mf{GL}_X(\unit_{\rep(\GLie)})$ is given by $p \mapsto \id_{\kk}$, for every $p\in \mf{g}_x$ and $x\in X$.
\end{itemize}
If $\varphi:V \to W$ and $\varphi':V'\to W'$ are morphisms of $\GLie$-representations, then $\varphi \otimes_{\rep(\GLie)} \varphi' = (\varphi_x \otimes \varphi'_x)_{x\in X}$. The associativity constraint is that induced by the tensor product of components, and the left/right unital constraints are given by scalar multiplication.
\end{lemma}

If $V$ is in $\GLiemod$, then $V = \bigoplus_{x \in X} V_x$ with each $V_x \in \mf{g}_x\lmod$, and so we get a Lie algebroid morphism $\tau_x: \mf{g}_x \to \mf{gl}_{x \in X}(V_x)$ which gives an $X$-Lie algebroid morphism $\GLie \to \mf{GL}_X(V)$. Conversely, if $V$ is in $\GLierep$, then the component $\tau_x$ of $\tau: \GLie \to \mf{GL}_X(V)$ gives $V_x$ the structure of a $\mf{g}_x$-module for all $x \in X$, which makes $V$ a $\GLie$-module. The details of how this leads to a monoidal isomorphism of categories can be worked out similarly to the groupoid setting, so we omit them here.

\begin{lemma} \label{lem:GLieequiv}
The categories $\GLiemod$ and $\rep(\GLie)$ are monoidally isomorphic.
\end{lemma}

Now, we change our focus to actions of $X$-Lie algebroids on $\kk$-algebras. 
Let $\GLie$ be an $X$-Lie algebroid. As we did for the category $\GGmod$ in Remark~\ref{rem:GGModuleMaps}, we would like to understand the connection between algebra objects in $\GLiemod$ and $\kk$-algebras that are $\GLie$-modules.

\begin{remark}\label{rem:XLieModuleMaps}
If $A = \bigoplus_{x\in X} A_x$ is an $X$-decomposable $\kk$-algebra, the multiplication map $m_A: A \otimes A \to A$ and unit map $u_A : \kk \to A$ immediately decompose into the respective set of multiplication maps $m_x: A_x \otimes A_x \to A_x$ and unit maps $u_x: \kk \to A_x$ of each $\kk$-algebra $A_x$, for all $x\in X$. If additionally $A$ is a $\GLie$-module, using the notation of Lemma~\ref{lem:Gliemod}, $m_A$ and $u_A$ induce $X$-decomposable linear maps $\underline{m}_A := (m_x)_{x\in X} : A \otimes_{\GLiemod} A \to A$ and $\underline{u}_A := (u_x)_{x\in X} : \unit_{\GLiemod} \to A$, which we call the \emph{monoidal multiplication} and \emph{monoidal unit} of $A$, respectively. It is clear that these maps satisfy the associativity and unital conditions. However, in general, these maps are not necessarily $\GLie$-module morphisms.

Conversely, if $A$ is $\GLie$-module algebra, then it comes equipped with maps in $\GLiemod$ $\underline{m}_A = (m_x)_{x \in X} A \otimes_{\GLiemod} A \to A$ and $\underline{u}_A = (u_x)_{x \in X}: \unit_{\GLiemod} \to A$. These maps extend naturally to maps $m_A: A \otimes A \to A$ (where if $a \in A_x$ and $b \in A_y$ for $x \neq y$, we define $m_A(a \otimes b) = 0$) and $u_A : \kk \to A$ (defined by $\kk\to \unit_{\GLiemod} \to A$ where $\kk \to \unit_{\GLiemod}$ maps $1_{\kk}$ to $(1,1, \cdots, 1)$).
\end{remark}

By definition, it follows immediately that the monoidal multiplication and monoidal unit maps are $\GLie$-module morphisms precisely when they make $A$ a $\GLie$-module algebra. Hence, we have proved the following result.

\begin{lemma}\label{lem:XLieModuleFormulas}
Let $A=\bigoplus_{x\in X} A_x$ be an $X$-decomposable $\kk$-algebra and let $\GLie$ be an $X$-Lie algebroid.
 Then the following statements are equivalent.
	\begin{enumerate}[font=\upshape]
		\item $A$ is a $\GLie$-module algebra, via the monoidal product $m: A \otimes_{\GLiemod} A \to A$ and monoidal unit $u: \unit_{\GLiemod} \to A$ of Remark~\ref{rem:XLieModuleMaps}.
		 
		\item $A$ is a $\GLie$-module, such that
 \begin{gather}
			p \cdot (ab) = a(p \cdot b) + (p \cdot a)b,\label{eq:XLiealg1}\\
			p \cdot 1_{x} = 0,\label{eq:XLiealg2}
		\end{gather}
		for all $p \in \mf{g}_x$ and $a,b\in A_{x}$.
	\end{enumerate}
\end{lemma}

By abuse of notation, we also call an $X$-decomposable algebra satisfying the conditions in Lemma~\ref{lem:XLieModuleFormulas}(b) a \emph{$\GLie$-module algebra}.
To proceed, we introduce a generalization of the Lie algebra $\Der(A)$ consisting of derivations of $A$ with commutator bracket.

\begin{notation}[$\Der_X(A)$]\label{not:GLLie2}
Let $A=\bigoplus_{x\in X} A_x$ be an $X$-decomposable $\kk$-algebra. We denote by $\Der_X(A)$ the $X$-Lie algebroid $\Der_X(A) = \bigoplus_{x\in X} \Der(A_x)$.
\end{notation}

Since each $\Der(A_x)$ is a Lie subalgebra of $\mf{gl}(A_x)$, we have that $\Der_X(A)$ is an $X$-Lie subalgebroid of $\mf{GL}(A)$.

\begin{proposition}\label{prop:Liealgebroidalg}
Let $A = \bigoplus_{x \in X} A_x$ be an $X$-decomposable $\kk$-algebra and let $\GLie = \bigoplus_{x \in X} \mf{g}_x$ be an $X$-Lie algebroid. Then:
\begin{enumerate}[font=\upshape] 
\item The natural action of $\Der(A_x)$ on $A_x$ for all $x \in X$ makes $A$ a $\Der_X(A)$-module algebra. We denote this action $\ell_{\Der_X(A), A}$ and write $p \rt a := \ell_{\Der_X(A), A}(p \boxtimes a)$ for $p \boxtimes a \in \Der_X(A) \boxtimes_{\XLie} A$.
 
\item Suppose that $A$ is a $\GLie$-module algebra via $\ell_{\GLie, A}$ and denote $p \cdot a := \ell_{\GLie, A}(p \boxtimes a)$ for $p \boxtimes a \in \GLie \boxtimes_{\XLie} A$. Then there is a unique $X$-Lie algebroid morphism $\tau: \GLie \to \Der_X(A)$ such that $p \cdot a = \tau(p) \rt a$ for all $p \boxtimes a \in \GLie \boxtimes_{\XLie} A$.

\item Every $X$-Lie algebroid morphism $\tau: \GLie \to \Der_X(A)$ gives $A$ the structure of a $\GLie$-module algebra via $\ell_{\GLie, A}(p \boxtimes a) = \ell_{\Der_X(A), A} (\tau(p) \boxtimes a)$ for all $p \boxtimes a \in \GLie \boxtimes_{\XLie} A$.
\end{enumerate}
Hence, $\Sym_{\XLie}(A) = \Der_X(A)$.
\end{proposition}

\begin{proof}
 (a) The natural action of $\Der(A_x)$ on $A_x$ is given by letting $p \rt a$ be the image of $a$ under the derivation $p$, for $a \in A_x$ and $p \in \Der(A_x)$. It is clear that $A$ is an $\Der_X(A)$-module and by Lemma~\ref{lem:XLieModuleFormulas}, it is clear that in fact $A$ is a $\Der_X(A)$-module algebra. 

 (b) If $A$ is a $\GLie$-module via $\cdot$, then there is an associated sub-$X$-Lie algebroid of $\mf{GL}_X(A)$ (see Lemma~\ref{lem:GLieequiv}). This induces a unique $X$-Lie algebroid morphism $\tau: \GLie \to \mf{GL}_X(A)$. Moreover, if $A$ is a $\GLie$-module algebra, then it satisfies \eqref{eq:XLiealg1} and \eqref{eq:XLiealg2}, which are equivalent to the map $\tau(p)$ being a derivation of $A_x$ for each $p \in \mf{g}_x$. Hence, the image of $\tau$ is contained in the sub-$X$-Lie algebroid $\Der_X(A)$ of $\mf{GL}_X(A)$. Corestricting $\tau$ to a map $\GLie \to \mf{GL}_X(A)$ gives the result.

 (c) If $\tau: \GLie \to \Der_X(A)$ is any $X$-Lie algebroid morphism, then we can define a map $\ell_{\GLie, A}: \GLie \boxtimes_{\XLie} A \to A$ by $\ell_{\GLie, A}(p \boxtimes a) = \ell_{\Der_X(A), A}(\tau(p), a)$ for all $p \boxtimes a \in \GLie \boxtimes_{\XLie} A$. Since $\tau$ is an $X$-Lie algebroid morphism, we see that if $p, q \in \Der(A_x)$ and $a \in A_x$, then
 \begin{align*}
 [p,q]\cdot a &= \tau([p,q])\rt a = [\tau(p), \tau(q)] \rt a = \tau(p) \rt (\tau(q) \rt a) -  \tau(q) \rt( \tau(p) \rt a) \\
 &= p \cdot ( q \cdot a) - q \cdot (p \cdot a),
 \end{align*}
 so $A$ is a $\GLie$-module. Further, for all $p \in \mf{g}_x$ and $a,b \in A_x$, we see
\[
p \cdot (ab) = \tau(p) \rt (ab) = a (\tau(p) \rt b) + (\tau(p) \rt a)b = a (p \cdot b) + (p \cdot a)b
\]
and
\[
p \cdot 1_x = \tau(p) \rt 1_x = 0.
\]
Hence, $A$ is a $\GLie$-module algebra.
\end{proof}

 We recover the following well-known result for actions of Lie algebras, which is the special case of Proposition~\ref{prop:Liealgebroidalg} when $|X| = 1$.

\begin{remark} \label{rem:Lie}
For a $\kk$-algebra $A$, $A$ is a $\Der(A)$-module algebra and for a Lie algebra $\mf{g}$, the following are equivalent.
 \begin{enumerate}[font=\upshape] 
  \item[(i)] $A$ is a $\mf{g}$-module algebra.
 
 \item[(ii)] There exists a Lie algebra morphism $\tau: \mf{g} \to \Der(A)$.
 \end{enumerate}
Hence, $\Sym_{\Lie}(A) = \Der(A)$.
\end{remark}

Recall that by Propostion~\ref{prop:weak-direct}(a), a direct sum of Hopf algebras has a canonical weak Hopf algebra structure. Following \cite[Section 3.2]{NDthesis} we recall a construction that generalizes the notion of the universal enveloping algebra of a Lie algebra to $X$-Lie algebroids.

\begin{definition}[$U_X(\GLie)$, $\XEnvLie$]
\label{def:envelopLiealgebroid}
The \emph{$X$-universal enveloping algebra} of $\GLie$ is the weak Hopf algebra $U_X(\GLie) := \bigoplus_{x \in X} U(\mf{g}_x)$, where each $U(\mf{g}_x)$ is the classical universal enveloping algebra of the Lie algebra $\mf{g}_x$.

The category $\XEnvLie$ of $X$-universal enveloping algebras of $X$-Lie algebroids is the full subcategory of $\XWHA$ consisting of $X$-universal enveloping algebras $U_X(\GLie)$ of $X$-Lie algebroids $\GLie$.
\end{definition}

\begin{remark}\label{rem:universal}
Notice that $U_X(\GLie)=\bigoplus_{x\in X} U(\mf{g}_x)$ is an $X$-decomposable algebra. In particular, it is an $X$-algebra, with idempotent set $\{1_x\}_{x\in X}$, where $1_x$ is the identity element of the universal enveloping algebra $U(\mf{g}_x)$, for each $x\in X$.
\end{remark}

Beyond its similarity to the classical case, the naming for $U_X(\GLie)$ reflects an actual universal property of this object, as demonstrated by the following result. Recall from Example~\ref{ex:LieXAlg}(b) that any $X$-decomposable $\kk$-algebra $A = \bigoplus_{x \in X} A_x$ has an associated $X$-Lie algebroid, denoted by $\mathcal{L}_X(A)$.

\begin{proposition}\label{prop:universal}
Let $A=\bigoplus_{x\in X} A_x$ be an $X$-decomposable $\kk$-algebra and let $\GLie$ be an $X$-Lie algebroid. For any $X$-Lie algebroid morphism $\tau=\{\tau_x\}_{x\in X}:\GLie \to \mathcal{L}_X(A)$ there exists an unique morphism of $X$-decomposable $\kk$-algebras $\tau'=\{\tau'_x\}_{x\in X}:U_X(\GLie) \to A$ such that $\tau'_x \circ \iota_x = \tau_x$, where $\iota_x : \mf{g}_x \to U(\mf{g}_x)$  is the canonical map for each $x\in X$.
\end{proposition}

\begin{proof}
For each $x \in X$, by definition, $\tau_x : \mf{g}_x \to \mathcal{L}(A_x)$ is a Lie algebra map. By the universal property of $U(\mf{g}_x)$, there exists a unique $\kk$-algebra morphism $\tau'_x : U(\mf{g}_x) \to A_x$ such that $\tau'_x \circ \iota_x = \tau_x$. Clearly, $\tau' = \bigoplus_{x \in X} \tau'_x$ is the desired morphism of $ X $-decomposable $\kk$-algebras.
\end{proof}

Consider the category $U_X(\GLie)\lmod$ of \emph{left $U_X(\GLie)$-modules}. Since $U_X(\GLie)$ is a weak Hopf algebra, it follows from Remark~\ref{Hmodboxtimes} that $U_X(\GLie)\lmod$ is a monoidal category. Moreover, given that for a Lie algebra $\mf{g}$ we have a monoidal equivalence between $\mf{g}\lmod$ and $U(\mf{g})\lmod$, the subsequent result becomes evident.

\begin{lemma} \label{lem:UGLiemod}
The categories $\GLie\lmod$ and $U_X(\GLie)\lmod$ are monoidally isomorphic.
\end{lemma}

Consider the following representation category for $U_X(\GLie)$.

\begin{definition}[$\rep(U_X(\GLie))$]
 Let $\rep(U_X(\GLie))$ be the category of {\it representations of $U_X(\GLie)$}, that is, objects are $X$-decomposable vector spaces $V = \bigoplus_{x \in X} V_x$ such that each $V_x$ is a representation of $U(\mf{g}_x)$.
\end{definition}

Observe that $\rep(\GLie) 
\cong 
\textstyle \bigoplus_{x\in X}\rep(\mf{g}_x) 
\cong 
\bigoplus_{x\in X}\rep(U(\mf{g}_x))
 \cong 
\rep(U_X(\GLie))$. Moreover, the monoidal structure of $\rep(\GLie)$ passes to $\rep(U_X(\GLie))$. Consequently, through Lemmas~\ref{lem:GLieequiv} and \ref{lem:UGLiemod}, a chain of monoidal isomorphisms between categories emerges:
\[\rep(U_X(\GLie)) \cong \rep(\GLie) \cong \GLiemod \cong U_X(\GLie)\lmod.\]
In the case when $|X|=1$, we naturally regain the classical monoidal isomorphisms for Lie algebras: $\rep(U(\mf{g})) \cong \rep(\mf{g}) \cong \mf{g}\lmod \cong U(\mf{g})\lmod$.

Recall that $\Lie$ denotes the category of Lie algebras, $\Algcat$ the category of $\kk$-algebras, and $\Hopf$ the category of Hopf algebras. It is well-known that the functor $U(-): \Lie \to \Algcat$, which associates to the Lie algebra $\mf{g}$ its universal enveloping algebra $U(\mf{g})$, is left adjoint to the functor $\mathcal{L}(-):\Algcat \to \Lie$, which associates to a $\kk$-algebra $B$ the Lie algebra $\mathcal{L}(B)$. Since $U(\mf{g})$ is actually a Hopf algebra, we may also view the universal enveloping algebra functor as a functor $U(-):\Lie \to \Hopf$. In this case, it is left adjoint to the functor $P(-):\Hopf \to \Lie$, which associates to the Hopf algebra $H$ its Lie algebra of primitive elements $P(H)$. Using the notation of Examples~\ref{ex:LieXAlg} and \ref{ex:primitiveWHA}, we generalize these two adjunctions to the $X$-Lie algebroid case, recovering the classical setting when $|X| = 1$.

\begin{theorem} \label{thm:adjLie}
Let $X$ be a finite nonempty set.
	\begin{enumerate}[label=\normalfont(\alph*)]
		\item The following functors are well-defined: 
		\[
		\begin{array}{l}
			U_X(-): \XLie \longrightarrow \XAlg \qquad \text{($X$-universal enveloping algebra);}\\ 
			\mathcal{L}_X(-): \XAlg \longrightarrow \XLie \qquad \text{(associated $X$-Lie algebroid).}
		\end{array}
		\]
		Moreover, $U_X(-) \dashv \mathcal{L}_X(-)$, that is, for an $X$-Lie algebroid $\GLie$ and an $X$-algebra $B$, we have a bijection that is natural in each slot: 
		\[
	\Hom_{\XLie}(\GLie, \mathcal{L}_X(B) ) \; \cong \; \Hom_{\XAlg}( U_X(\GLie) , B).
		\]
		
		\item 
	 The following functors are well-defined:
		\[\qquad \qquad
		\begin{array}{l}
			U_X(-): \XLie \longrightarrow \XWHA \qquad \text{($X$-universal enveloping algebra);}\\ 
			P_X(-): \XWHA \longrightarrow \XLie \qquad \text{($X$-Lie algebroid of primitive elements).}
		\end{array}
		\]
		Moreover, $U_X(-) \dashv P_X(-)$, that is, for an $X$-Lie algebroid $\GLie$ and an $X$-weak Hopf algebra $H$, we have a bijection that is natural in each slot: 
		\[
	\Hom_{\XLie}(\GLie, P_X(H)) \; \cong \; \Hom_{\XWHA}( U_X(\GLie), H).		\]
	In particular, $P_X(U_X(\GLie)) = \GLie$. 
	\end{enumerate}
\end{theorem}

\begin{proof}
    (a) First, we prove that $U_X(-): \XLie \to \XAlg$ is a functor. Let $\GLie = \bigoplus_{x\in X} \mf{g}_x$ and $\GLie' = \bigoplus_{x\in X} \mf{g}'_x$ be $X$-Lie algebroids. By Remark~\ref{rem:universal}, $U_X(-)$ sends objects to objects. Also, if $\tau= \{ \tau_x : \mf{g}_x \to \mf{g}'_x  \}_{x\in X}$ is an $X$-Lie algebroid morphism, using the classical adjunction for each $x\in X$, we can uniquely extend the Lie algebra morphism $\tau_x : \mf{g}_x \to \mf{g}'_x$ to an algebra morphism $U(\tau_x) : U(\mf{g}_x) \to U(\mf{g}'_x)$. Hence, $U_X(\tau):=\bigoplus_{x\in X} U(\tau_x) : U_X(\GLie) \to U_X(\GLie')$ is an $X$-algebra map. Hence, $U_X(-)$ also sends morphisms to morphisms. It is clear that $U_X(-)$ respects compositions and identities.

    Secondly, we check that $\mathcal{L}_X(-): \XAlg \to \XLie$ is a functor. From Example~\ref{ex:LieXAlg} it follows that $\mathcal{L}_X(-)$ sends objects to objects. Moreover, if $\psi:A \to B$ is an $X$-algebra map, we define $\psi_x: A_x \to B_x$ as the restriction $\psi_x:=\psi|_{A_x}$. Here, the $\kk$-algebras $A_x$ and $B_x$ are defined as in Example~\ref{ex:LieXAlg}. For each $x\in X$, it is clear that $\psi_x$ is a $\kk$-algebra map, which induces a Lie algebra map $\mathcal{L}(\psi_x): \mathcal{L}(A_x) \to \mathcal{L}(B_x)$. Therefore $\mathcal{L}_X(\psi):=\bigoplus_{x\in X} \mathcal{L}(\psi_x) : \mathcal{L}_X(A) \to \mathcal{L}_X(B)$ is an $X$-Lie algebroid morphism. Also, by construction, $\mathcal{L}_X(-)$ respects compositions and identities.

    Now, for any $X$-Lie algebroid $\GLie = \bigoplus_{x\in X} \mf{g}_x$ and any $X$-algebra $B$, we want to display a bijection
	\begin{equation*}
		\Hom_{\XLie}(\GLie, \mathcal{L}_X(B) )  \cong \Hom_{\XAlg}( U_X(\GLie) , B).
	\end{equation*}
	If $\phi: \GLie \to\mathcal{L}_X(B)$ is an $X$-Lie algebroid morphism of the form $\phi=\{ \phi_x: \mf{g}_x \to \mathcal{L}(B_x) \}_{x\in X}$, for each $x\in X$ we use the classical adjunction to construct $\kk$-algebra maps $\phi'_x: U(\mf{g}_x) \to B_x$. Since each $B_x$ is contained in $B$, we can construct $\phi':= \bigoplus_{x\in X} \phi'_x : U_X(\GLie) \to B$, which is clearly an $X$-algebra morphism. Hence, we have constructed the assignment
\[
		\Phi_{\GLie,B}: \Hom_{\XLie}(\GLie, \mathcal{L}_X(B) ) \longrightarrow \Hom_{\XAlg}( U_X(\GLie) , B), \quad
		\phi \mapsto \phi'.
\]
On the other hand, given an $X$-algebra map $\psi \in \Hom_{\XAlg}( U_X(\GLie) , B)$, for each $x\in X$ consider the restriction $\psi|_{U(\mf{g}_x)}: U(\mf{g}_x) \to B$, which is an algebra map. Since for any $p\in \mf{g}_x$ we have $\psi|_{U(\mf{g}_x)}(p) = \psi(1_x p 1_x) = e_x^B\psi(p) e_x^B \in B_x$, each restriction is actually of the form  $\psi|_{U(\mf{g}_x)}: U(\mf{g}_x) \to B_x$. As above, using the classical adjunction we have, for each $x\in X$, a Lie algebra map $\psi|_{\mf{g}_x}: \mf{g}_x \to \mathcal{L}(B_x)$. Then $\psi|_{\GLie}:= \{\psi|_{\mf{g}_x} \}_{x\in X}: \GLie \to \mathcal{L}_X(B)$ is an $X$-Lie algebroid map. Hence, we have constructed the assignment
\[
\Psi_{\GLie,B}: \Hom_{\XAlg}( U_X(\GLie) , B) \longrightarrow \Hom_{\XLie}(\GLie, \mathcal{L}_X(B) ), \quad \psi \mapsto \psi|_{\GLie}.
\]
The following calculations show that these assignments are mutually-inverse. If $p\in \mf{g}_x$, then
	\begin{gather*}
		[( \Psi_{\GLie,B} \circ \Phi_{\GLie,B} ) ( \phi )] (p) = \Psi_{\GLie,B} (\phi') (p) = \phi'|_{\GLie} (p) =\phi(p),\\
		[( \Phi_{\GLie,B} \circ \Psi_{\GLie,B}) (\psi)] (p) = \Phi_{\GLie,B} (\psi|_{\GLie})(p) = (\psi|_{\GLie})' (p) = \psi(p).
	\end{gather*}
Finally, the bijection is natural since for any $X$-Lie algebroid $\varphi: \GLie' \to \GLie$, any $X$-algebra map $f: B \to B'$, and all $\psi \in \Hom_{\XAlg}(U_X(\GLie), B)$ one can check the following equality:
\begin{equation*}
(	\Phi_{\GLie',B'} \circ \Hom_{\XAlg}(U_X( \varphi),f )) (\psi) = (\Hom_{\XLie} ( \varphi, \mathcal{L}_X(f) ) \circ \Phi_{\GLie,B} )( \psi ).
\end{equation*}
Here, $\Hom_{\XAlg}(U_X(\varphi),f ) : \Hom_{\XAlg}(U_X(\GLie), B) \to \Hom_{\XAlg}(U_X(\GLie'), B') $ is given by composition, i.e., $\psi \mapsto f \circ \psi \circ U_X( \varphi )$; a similar notion holds for $\Hom_{\XLie} ( \varphi, \mathcal{L}_X(f) ) $.

\noindent (b) First, we show that $U_X(-): \XLie \to \XWHA$ is a functor. If $\GLie=\bigoplus_{x\in X} \mf{g}_x$ is an $X$-Lie algebroid, then $U_X(\GLie)=\bigoplus_{x\in X} U(\mf{g}_x)$ is a weak Hopf algebra. The set $\{1_x\}_{x \in X}$ of identities $1_x \in U(\mf{g}_x)$ are idempotent elements of $U_X(\GLie)$ satisfying $1_{U_X(\GLie)} = \sum_{x \in X} 1_x$ and $\Delta(1_x) = 1_x \otimes 1_x$ for each $x \in X$, and so $U_X(\GLie)$ is an object of $\XWHA$. If $\tau = \{ \tau_x \}_{x\in X}: \GLie \to \GLie'$ is an $X$-Lie algebroid morphism, then by part (a), $U_X(\tau): U_X( \GLie) \to U_X( \GLie')$ is a map of $X$-algebras. Moreover, by the second classical adjunction each $U(\tau_x) : U(\mf{g}_x) \to U(\mf{g}_x')$ defined in part (a) is, in fact, a Hopf algebra morphism. So it is clear that $U_X(\tau)$ is a morphism in $\XWHA$.

Now, we show that $P_X(-): \XWHA \to \XLie$ is a functor. By Example~\ref{ex:primitiveWHA}, if $H$ is an $X$-weak Hopf algebra then $P_X(H) \in \XLie$. Moreover, if $f: H \to H'$ is a morphism in $\XWHA$, then $P_X(f)$ is simply the restriction of $f$ to $P_X(H)$. Using the notation of Example~\ref{ex:primitiveWHA}, it is clear for any $x\in X$ that if $h\in P_x(H)$ then $f(h) \in P_x(H')$, so $P_X(f)$ is indeed a morphism in $\XLie$. Also, by construction, $P_X(-)$ respects compositions and identities.

Finally, for any $\GLie \in \XGrpd$ and $H \in \XWHA$, we wish to exhibit a bijection
\[
\Hom_{\XLie}(\GLie, P_X(H)) \cong \Hom_{\XWHA} (U_X( \GLie), H).
\]
If $\phi : \GLie \to P_X(H) $ is an $X$-Lie algebroid morphism, then $U_X( \phi): U_X( \GLie) \to U_X( P_X(H))$ is a morphism of $X$-weak Hopf algebras. Since the Lie bracket in each component of $P_X(H)$ was defined as the commutator of $H$, the map $\eta_H: U_X( P_X(H)) \to H$ induced by the inclusion $P_X(H) \subseteq H$ is a unital $\kk$-algebra map. Also, the elements of $P_X(H)$ are $x$-primitive in $H$, for some $x\in X$, so $\eta_H$ is also a $\kk$-coalgebra map. Since taking opposites in $P_X(H)$ coincides with applying the antipode of $H$, $\eta_H$ intertwines the antipodes of $U_X(P_X(H))$ and $H$. Therefore, $\eta_H \circ U_X( \phi) \in \Hom_{\XWHA}(U_X(\GLie), H)$, and we have the assignment
\[
\Phi_{\GLie, H}: \Hom_{\XLie}(\GLie, P_X(H)) \longrightarrow \Hom_{\XWHA} (U_X(\GLie), H), \quad \phi \mapsto \eta_H \circ U_X(\phi).
\]
Conversely, if $\psi: U_X(\GLie) \to H$ is a map in $\XWHA$, then $P_X(\psi): P_X(U_X(\GLie)) \to P_X(H)$ is an $X$-Lie algebroid morphism. Note that $P_X(U_X (\GLie)) = \GLie$. Indeed, since we assume that $\kk$ has characteristic 0, for each $x\in X$ we have $P(U(\mf{g}_x))=\mf{g}_x$. We have therefore constructed an assignment
\[
\Psi_{\GLie, H}:\Hom_{\XWHA} (U_X(\GLie), H) \longrightarrow \Hom_{\XLie}(\GLie, P_X(H)), \quad \psi \mapsto P_X(\psi) \circ \nu_{\GLie},
\]
where $\nu_{\GLie}: \GLie \to P_X(U_X(\GLie))$ is the identity map. It is straightforward to check that $\Phi_{\GLie, H}$ and $\Psi_{\GLie, H}$ are mutually-inverse and the naturality of the adjunction.
\end{proof}

\begin{remark}\label{rem:Liealgebroid-bij}
    Let $A=\bigoplus_{x\in X} A_x$ be an $X$-decomposable $\kk$-algebra. In Theorem~\ref{thm:adjLie}(b), by taking $H=U_X(\Der_X(A))$, we have the bijection
\[
 \Hom_{\XLie}(\GLie, \Der_X(A)) \cong \Hom_{\XWHA}(U_X(\GLie), U_X(\Der_X(A)) ). 
\]
\end{remark}

Finally, we will construct the symmetry object $\Sym_{\XEnvLie}(A)$ over any $X$-decomposable $\kk$-algebra. Note that since $U_X(\GLie)$ is a weak Hopf algebra, by a $U_X(\GLie)$-module algebra, we mean a $\kk$-algebra satisfying condition \eqref{eq.Hmodalgebra} in Proposition~\ref{def:modulealgoverWHA}.

\begin{proposition}\label{prop:U(GLie)}
Let $A = \bigoplus_{x \in X} A_x$ be an $X$-decomposable $\kk$-algebra and let $\GLie$ be an $X$-Lie algebroid.
\begin{enumerate}[font=\upshape] 
\item The action of $U_X(\Der_X(A))$ on $A$ induced by the action of $\Der_X(A)$ on $A$ makes $A$ a $U_X(\Der_X(A))$-module algebra. For the action map $\ell_{U_X(\Der_X(A)), A}$, we use the notation $h \rt a$ for $\ell_{U_X(\Der_X(A)), A}(h \otimes a)$, where $h \in U_X(\Der_X(A))$ and $a \in A$.

\item Suppose that $A$ is a $U_X(\GLie)$-module algebra via $\ell_{U_X(\GLie), A}$, where we write $h \cdot a$ for $\ell(h \otimes a)$ for $h \in U_X(\GLie)$ and $a \in A$. Then there is a unique $X$-weak Hopf algebra morphism $\widetilde{\tau}: U_X(\GLie) \to U_X(\Der_X(A))$ such that $h \cdot a = \widetilde{\tau}(h) \rt a$ for all $h \in U_X(\GLie)$ and $a \in A$.

\item Every $X$-weak Hopf algebra morphism $\widetilde{\tau}: U_X(\GLie) \to U_X(\Der_X(A))$ gives $A$ the structure of a $U_X(\GLie)$-module algebra via $\ell_{U_X(\GLie),A}(h \otimes a) = \ell_{U_X(\Der_X(A)), A}(\widetilde{\tau}(h) \otimes a)$ for all $h \in U_X(\GLie)$ and $a \in A$ (that is, $h \cdot a = \widetilde{\tau}(h) \rt a$).
\end{enumerate}
Hence, $\Sym_{\XEnvLie}(A) = U_X(\Der_X(A))$.
\end{proposition}

\begin{proof}
(a) It is clear that each $A_x$ is a $\Der(A_x)$-module algebra. Hence, by the classical theory, each $A_x$ is a $U(\Der(A_x))$-module algebra. Hence, by Proposition~\ref{prop:hs=ht}(c), $A$ is a $U_X(\Der_X(A))$-module algebra.

(b, c) By the same argument as above, $A$ is a $U_X(\GLie)$-module algebra if and only if each $A_x$ is a $U(\mf{g}_x)$-module algebra if and only if each $A_x$ is a $\mf{g}_x$-module algebra. By Lemma~\ref{lem:XLieModuleFormulas}, this happens if and only if $A$ is a $\GLie$-module algebra. 

By Proposition~\ref{prop:Liealgebroidalg}, this happens if and only if the action of $\GLie$ factors through a unique $X$-Lie algebroid morphism $\tau: \GLie \to \Der_X(A)$. By Remark~\ref{rem:Liealgebroid-bij}, this if and only if there exists an $X$-weak Hopf algebra morphism $\widetilde{\tau}: U_X(\GLie) \to U_X(\Der_X(A))$.

Note that since $\XEnvLie$ is a full subcategory of $\XWHA$, the morphisms in $\XEnvLie$ are simply $X$-weak Hopf algebra morphisms. As a consequence, (a), (b), and (c) above imply that $\Sym_{\XEnvLie}(A) = U_X(\Der_X(A))$, as desired.
\end{proof}

 When $|X| = 1$, we recover the following well-known result for actions of enveloping algebras of Lie algebras.

\begin{remark} \label{rem:U(Lie)}
Let $A$ be a $\kk$-algebra. Then $A$ is a $U(\Der(A))$-module algebra and for a Lie algebra $\mf{g}$, the following are equivalent.
 \begin{enumerate}[font=\upshape] 
 
 \item[(i)] $A$ is a $U(\mf{g})$-module algebra.
 
 \item[(ii)] There exists a Hopf algebra morphism $\widetilde{\tau}: U(\mf{g}) \to U(\Der(A))$.
 \end{enumerate}
Hence, $\Sym_{\EnvLie}(A) = U(\Der(A))$.
\end{remark}

\section{Actions of cocommutative weak Hopf algebras on algebras} \label{sec:cocomweak}

In this section our goal is to study symmetries in the category $\CC = \XCocomWHA$ of cocommutative $X$-weak Hopf algebras (see Notation~\ref{not:cats}). The main result, which identifies the symmetry object $\Sym_{\CC}(A)$, is Theorem~\ref{thm:cocomweakHopf}.
First, we recall the definition of the smash product over a weak Hopf algebra, which generalizes the classical construction. Throughout, we use that the antipode $S$ of a weak Hopf algebra $H$ is bijective between the counital subalgebras $H_t$ and $H_s$ \cite[Proposition 2.3.2]{NV}.

\begin{definition}[{\cite[Section 4.2]{NV}}]\label{def:smashpro}
Let $H$ be a weak Hopf algebra and let $A$ be an $H$-module algebra. The \emph{smash product} algebra $A \# H$ is defined as the $\kk$-vector space $A \otimes_{H_t} H$, where $A$ is a right $H_t$-module via $a \cdot z = S^{-1}(z) \cdot a = a (z \cdot 1_A)$, for $a \in A$ and $z \in H_t$.
Multiplication in $A \# H$ is defined by
$(a \# h)(b \# g) = a(h_1 \cdot b) \# h_2 g.$ Here $a\#h$ denotes a coset representative of $a\otimes_{H_t} h$ for $a\in A$ and $h\in H$.
\end{definition}

To state Nikshych's generalization of the Cartier--Gabriel--Kostant--Milnor--Moore theorem \cite{NDthesis}, we first describe the weak Hopf algebra structure on this smash product.

\begin{definition} \label{def:XdecompHopf}
Let $H$ be a weak Hopf algebra. We say that $H$ is an \emph{$X$-decomposable weak Hopf algebra} if there exists a family $\{H_x\}_{x\in X}$ of weak Hopf algebras (some of which may be $0$) such that $H=\bigoplus_{x\in X} H_x$ as weak Hopf algebras.
\end{definition}

\begin{remark}
\begin{enumerate}
 \item Every $X$-decomposable weak Hopf algebra is an $X$-decomposable $\kk$-algebra (Definition~\ref{def:XdecompA}) whose $X$-decomposition also respects the weak Hopf structure.
 \item Not every $X$-weak Hopf algebra is an $X$-decomposable weak Hopf algebra. For example, for an $X$-groupoid $\GG$, the groupoid algebra $\kk \GG$ is in $\XWHA$ (Example~\ref{ex:groupoidalgXWHA}). However, if the idempotents $\{e_x\}_{x\in X}$ in $\kk\GG$ are not central (see Remark~\ref{rem:localid}), then $\kk\GG$ is not an $X$-decomposable algebra and thus not a $X$-decomposable weak Hopf algebra.
 \item Conversely, 
 not every $X$-decomposable weak Hopf algebra is as an $X$-weak Hopf algebra. For instance, if we have an $X$-decomposable weak Hopf algebra $H= \bigoplus_{x\in X} H_x$, where some of the $H_x$ are proper weak Hopf algebras, the idempotents $1_x$ in $H$ may not necessarily be grouplike elements as required by Definition~\ref{def:xxx}. (For example, the weak Hopf algebra from \cref{ex:hay} has non-grouplike idempotents for $N\geq 2$.) Consequently, $H$ would not satisfy the conditions to be classified as an $X$-weak Hopf algebra.
 
 \item For every $X$-Lie algebroid $\GLie= \bigoplus_{x \in X} \mf{g}_x$, the $X$-universal enveloping algebra $U_X(\GLie)$ is an $X$-decomposable weak Hopf algebra and also is an $X$-weak Hopf algebra since each $U(\mf{g}_x)$ is a Hopf algebra.
\end{enumerate}
\end{remark}

\begin{definition}[$\XAutH$]\label{def:autgroupoid3}
 
Let $H=\bigoplus_{x\in X} H_x$ be an $X$-decomposable weak Hopf algebra. We define $\XAutH$, the \emph{$X$-weak bialgebra automorphism groupoid} of $H$, as follows: 
\begin{itemize}
 \item the object set is $X$,
 \item for any $x,y\in X$, $\Hom_{\XAutH}(H_x,H_y)$ is the space of weak bialgebra isomorphisms between the weak bialgebras $H_x$ and $H_y$.
\end{itemize}
\end{definition}

Clearly, $\XAutH$ is a subgroupoid of $\Aut_{\XAlg}(H)$, as defined in Definition~\ref{def:autgroupoid2}.

\begin{proposition}\label{prop:smashproduct}
Let $\GG$ be an $X$-groupoid and $H =\bigoplus_{x\in X} H_x$ be an $X$-decomposable weak Hopf algebra. If $H$ is a $\kk\GG$-module algebra via the action $\cdot$, then the following statements are equivalent.
\begin{enumerate}[font=\upshape]
\item For every $g\in \GG_1$ and $a\in H_{s(g)}$ the following relations hold:
\begin{gather}
 \Delta_H(g\cdot a) = g\cdot a_1 \otimes g\cdot a_2, \label{eq:smash1}\\
 \varepsilon_H(g\cdot a)=\varepsilon_{H_{s(g)}}(a).\label{eq:smash2}
\end{gather}
\item There exists an $X$-groupoid morphism $\pi: \GG \to \Aut_{\XWBA}(H)$.
\end{enumerate}
Moreover, in this case the smash product algebra $H \# \kk\GG$ becomes a weak Hopf algebra with operations given by:
\begin{equation}\label{eq:smashproduct}
 \Delta(a \# g) = a_1 \# g \; \otimes \; a_2 \# g, \quad \varepsilon(a \# g) = \varepsilon_H(a), \quad S(a \# g) = (1_H \# g^{-1})(S_H(a) \# 1_{\kk\GG}),
\end{equation}
for all $g\in\GG_1$ and $a\in H_{t(g)}$.
\end{proposition}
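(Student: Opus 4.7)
The plan is to first establish the equivalence $(a) \Leftrightarrow (b)$ by repackaging the $\kk\GG$-module structure as a family of linear maps and observing that conditions \eqref{eq:smash1}--\eqref{eq:smash2} upgrade these maps from algebra isomorphisms to weak bialgebra isomorphisms. Since $H$ is a left $\kk\GG$-module algebra, Corollary~\ref{cor:kGrpd} (applied to the underlying algebra of $H$) produces structure isomorphisms $\nu_g\colon H_{s(g)} \to H_{t(g)}$, $a \mapsto g\cdot a$, that are unital algebra maps and assemble into a $\GG_0$-groupoid homomorphism into $\Aut_{\GG_0\text{-}\Alg}(H)$. Conditions \eqref{eq:smash1}--\eqref{eq:smash2} say precisely that each $\nu_g$ is coalgebra-preserving and counital, so combined with the algebra-map property they promote $\nu_g$ to a weak bialgebra isomorphism whose inverse is $\nu_{g^{-1}}$; this is exactly the statement that $\pi := (g \mapsto \nu_g)$ factors through $\Aut_{\GG_0\text{-}\WBA}(H)$, giving (b). For the converse, setting $g\cdot a := \pi(g)(a)$ recovers \eqref{eq:smash1}--\eqref{eq:smash2} directly from the coalgebra-map property of $\pi(g)$.

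For the weak Hopf algebra structure on $H \# \kk\GG$, I will first check that the formulas in \eqref{eq:smashproduct} descend to the quotient $H \otimes_{(\kk\GG)_t} \kk\GG$, where $(\kk\GG)_t = \bigoplus_{x \in \GG_0} \kk e_x$ by Example~\ref{ex:kGweak}. Well-definedness of $\Delta$ uses that each $e_x$ is a grouplike idempotent of $\kk\GG$ so that the tensor-like form $a_1 \# g \otimes a_2 \# g$ respects the balancing $a(e_x \cdot 1_H) \otimes g = a \otimes e_x g$; well-definedness of $\varepsilon$ uses \eqref{eq:smash2} together with $\varepsilon_H(1_{H_{s(g)}}) = 1_\kk$; well-definedness of $S$ uses the multiplicative structure together with the identity $\nu_{g^{-1}} \circ S_H|_{H_{t(g)}} = S_H|_{H_{s(g)}} \circ \nu_{g^{-1}}$, which holds because $\nu_{g^{-1}}$ is a weak bialgebra map. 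Coassociativity, counitality, and multiplicativity of $\Delta$ then reduce to the corresponding axioms of $H$ and of $\kk\GG$, glued via \eqref{eq:smash1} and the grouplikeness of each $g \in \GG$; likewise, the compatibility of $\varepsilon$ with multiplication follows from Definition~\ref{def:wba}\ref{def:wba4} for $H$ together with \eqref{eq:smash2} and $\varepsilon_{\kk\GG}(g) = 1_\kk$.

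The main obstacle will be verifying the weak bialgebra axiom Definition~\ref{def:wba}\ref{def:wba5} and the three antipode relations for $H \# \kk\GG$, since the unit $1_{H\#\kk\GG} = \sum_{x \in \GG_0}(1_{H_x} \# e_x)$ has a nontrivial comultiplication inherited component-wise from the $H_x$. I plan to expand both sides of Definition~\ref{def:wba}\ref{def:wba5} using $\Delta_H(1_{H_x}) = (1_{H_x})_1 \otimes (1_{H_x})_2 \in H_x \otimes H_x$ and the factorization $1_{H_x} \# e_x = (1_{H_x} \# 1_{\kk\GG})(1_H \# e_x)$ to reduce each identity to a cascade of relations already holding inside $H$ and inside $\kk\GG$, pieced together by the bialgebra-linearity guaranteed in condition (a). The antipode verification $S((a\# g)_1)(a \# g)_2 = \varepsilon_s(a \# g)$ will then reduce, after rewriting $S(a \# g) = (1_H \# g^{-1})(S_H(a) \# 1_{\kk\GG})$ and expanding comultiplication by \eqref{eq:smashproduct}, to the identity $g^{-1} \cdot \bigl(S_H(a_1) a_2\bigr) = g^{-1} \cdot \varepsilon_{s,H}(a)$, which is immediate from the fact that $\nu_{g^{-1}}$ is linear and the antipode axiom for $H$ restricted to $H_{t(g)}$.
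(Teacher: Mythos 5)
Your overall strategy coincides with the paper's: the equivalence (a) $\Leftrightarrow$ (b) is obtained exactly as you describe, by feeding the $\kk\GG$-module algebra structure through Corollary~\ref{cor:kGrpd} and observing that \eqref{eq:smash1}--\eqref{eq:smash2} say precisely that each structure map $\pi(g)\colon H_{s(g)}\to H_{t(g)}$ is a coalgebra map, hence a weak bialgebra isomorphism whose image lands in the subgroupoid of weak bialgebra automorphisms; and the weak Hopf structure on $H\#\kk\GG$ is then verified by direct computation. Your attention to well-definedness over $\otimes_{(\kk\GG)_t}$ is a point the paper passes over silently, and is a reasonable addition.

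The one place where your plan underestimates the work is the antipode axiom. You claim that $S((a\#g)_1)(a\#g)_2=\varepsilon_s(a\#g)$ reduces to the identity $g^{-1}\cdot(S_H(a_1)a_2)=g^{-1}\cdot\varepsilon_{s,H}(a)$, which you call immediate. That identity is indeed immediate, but it is not the whole statement: the right-hand side of the axiom is the source counital map of the \emph{smash product}, namely $\varepsilon_s(a\#g)=(1\#1_{\kk\GG})_1\,\varepsilon\bigl((a\#g)(1\#1_{\kk\GG})_2\bigr)$, and $\Delta(1\#1_{\kk\GG})=\sum_{x}(1_{H_x})_1\#e_x\otimes(1_{H_x})_2\#e_x$ is nontrivial because each $H_x$ is itself a weak Hopf algebra. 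Matching $g^{-1}\cdot\varepsilon_{s,H}(a)\#e_{s(g)}$ against this expression is where the real content lies; in the paper it requires the further identities $\varepsilon_H(a1_2)=\varepsilon_H(a(g\cdot 1_2))$ and $(g^{-1}\cdot 1_1)\#e_{s(g)}=1_1\#e_{s(g)}$, both of which again invoke the $\GG_0$-decomposable structure, the module-algebra axioms, and the vanishing $(1_{H_y})_1\#e_x=0$ for $x\neq y$. The analogous remark applies to the axiom $\Delta^2(1)=(\Delta(1)\otimes 1)(1\otimes\Delta(1))$, which you correctly flag as the main obstacle but which likewise needs $e_x\cdot 1_H=1_{H_x}$ and the cross-term vanishing to collapse the fourfold sum over $\GG_0$ to a single diagonal sum. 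So the approach is the right one, but you should carry out the identification of $\varepsilon_s$ and $\varepsilon_t$ of the smash product explicitly before declaring the antipode relations immediate.
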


\begin{proof}
Since $H$ is a $\kk \GG$-module algebra, Proposition~\ref{prop:kGrpd} guarantees the existence of an $X$-groupoid morphism $\pi: \GG \to \Aut_{\XAlg}(H)$  such that $g\cdot a=\pi(g)(a)$ for all $g\in \GG_1$ and $a\in H_{s(g)}$; note that if $a\in H_x$ with $x\neq s(g)$ then $g\cdot a =0$ (see Remark \ref{rem:groupoidmod}). Equations \eqref{eq:smash1} and \eqref{eq:smash2} are equivalent to the statement that for all $g\in\GG_0$ the $\kk$-algebra map $\pi(g): H_{s(g)} \to H_{t(g)}$ is a $\kk$-coalgebra map. But this is the same as requiring the image of $\pi$ to be contained in the subgroupoid $\Aut_{\XWBA}(H)$. This proves that (a) and (b) are equivalent.

Now, it is clear that $H \# \kk\GG$ is both a $\kk$-algebra and a $\kk$-coalgebra, and the maps of \eqref{eq:smashproduct} are well-defined. Let $g,h \in \GG_1$ such that $t(h)=s(g)$. Suppose that $a\in H_{t(g)}$ and $b\in H_{t(h)}$, so we have
\begin{align*}
 \Delta((a\# g)(b\# h)) &= a_1 (g \cdot b)_1 \# gh \; \otimes \; a_2(g\cdot b)_2 \# gh \overset{\eqref{eq:smash1}}{=} a_1 (g \cdot b_1) \# gh \; \otimes \; a_2(g\cdot b_2) \# gh\\
 &= (a_1 \# g \; \otimes \; a_2 \# g)(b_1 \# h \; \otimes \; b_2 \# h) = \Delta(a\# g)\Delta(b\# h).
\end{align*}
Otherwise, both sides of the above equation are zero. Hence, $\Delta$ is multiplicative.  
If $g,h,l\in \GG_1$ satisfy $t(h)=s(g)$ and $t(l)=s(h)$, then for $a\in H_{t(g)}$, $b \in H_{t(h)}$, $c\in H_{t(l)}$ we have
\begin{align*}
 \varepsilon((a\# g)(b\# h)(c\# l)) &= \varepsilon( a(g\cdot b)(gh\cdot c) \# ghl ) = \varepsilon_H(a(g\cdot b)(gh\cdot c))\\
 &\overset{\eqref{eq:smash1}}{=} \varepsilon_H( a (g\cdot b_1) ) \varepsilon_H( (g\cdot b_2)(gh\cdot c )) = \varepsilon_H( a (g\cdot b_1) ) \varepsilon_H( g\cdot (b_2(h\cdot c)))\\
 &\overset{\eqref{eq:smash2}}{=} \varepsilon_H( a (g\cdot b_1) ) \varepsilon_{H_{s(g)}}( b_2(h\cdot c)) = \varepsilon( a(g\cdot b_1) \# gh)\varepsilon(b_2(h\cdot c) \# hl)\\
 &= \varepsilon((a\# g)(b\# h)_1)\varepsilon((b\# h)_2(c\# l)),
\end{align*}
and similarly, $\varepsilon((a\# g)(b\# h)(c\# l)) = \varepsilon((a\# g)(b\# h)_2)\varepsilon((b\# h)_1(c\# l))$,
so $\varepsilon$ is weakly multiplicative. Let $1:=1_H$, $1_x = 1_{H_x}$ for all $x\in X$, and $1_{\kk\GG}=\sum_{x\in X} e_x$. 
Then
\begin{align*}
 & (\Delta(1 \# 1_{\kk\GG}) \; \otimes \; 1 \# 1_{\kk\GG})(1 \# 1_{\kk\GG} \; \otimes \; \Delta(1 \# 1_{\kk\GG})) \\
 &= \left( \sum_{x\in X} \Delta(1 \# e_x) \; \otimes \; \sum_{y\in X} 1 \# e_y \right)\left( \sum_{z\in X} 1 \# e_z \; \otimes \; \sum_{w\in X} \Delta(1 \# e_w) \right) \\
 &= \left( \sum_{x,y\in X} 1_1 \# e_x \; \otimes \; 1_2 \# e_x \; \otimes \; 1 \# e_y \right)\left( \sum_{z,w\in X} 1 \# e_z \; \otimes \; 1_1' \# e_w \; \otimes \; 1_2' \# e_w \right)\\
 &= \sum_{x,y,z,w\in X} 1_1 \# e_x \cdot 1 \# e_z \; \otimes \; 1_2 \# e_x \cdot 1_1' \# e_w \; \otimes \; 1 \# e_y \cdot 1_2' \# e_w\\
 &= \sum_{x,y,z,w\in X} 1_1(e_x \cdot 1) \# e_xe_z \; \otimes \; 1_2(e_x \cdot 1_1') \# e_xe_w \; \otimes \; e_y \cdot 1_2' \# e_ye_w\\
 &\overset{(*)}{=} \sum_{x\in X} (1_x)_1 \# e_x \; \otimes \; (1_x)_2 (1_x')_1 \# e_x \; \otimes \; (1_x')_2 \# e_x\\
 &\overset{(**)}{=} \sum_{x\in X} (1_x)_1 \# e_x \; \otimes \; (1_x)_2 \# e_x \; \otimes \; (1_x)_3 \# e_x \\
 &\overset{(***)}{=} \sum_{x\in X} 1_1 \# e_x \; \otimes \; 1_2 \# e_x \; \otimes \; 1_3 \# e_x\\
 &= \sum_{x\in X} \Delta( 1_1 \# e_x ) \; \otimes \; 1_2\# e_x =\Delta^2(1 \# 1_{\kk\GG}).
\end{align*}
In $(*)$ we used that $e_x\cdot 1=1_x$, in $(**)$ that each $H_x$ is a weak Hopf algebra, while in $(***)$ that $(1_y)_1\#e_x=0$ if $x\neq y$; recall that $(\kk\GG)_t = \bigoplus_{x\in X} \kk e_x$. This proves that $\Delta$ is weakly comultiplicative and thus the smash product $H \# \kk\GG$ is a weak bialgebra. 

Next, we will use the identity $\Delta(1_H\#1_{\kk \GG})=\sum_{x\in X}(1_x)_1 \# e_x\otimes (1_x)_2\#e_x$ to prove the antipode condition. Indeed, if $g\in\GG_1$ and $a\in H_{t(g)}$, then
\begin{align*}
 S(a_1 \# g)(a_2 \# g) &= (1 \# g^{-1})(S_H(a_1)\# 1_{\kk\GG})(a_2 \# g) = g^{-1} \cdot (S_H (a_1)a_2 ) \# e_{s(g)}\\
 &= g^{-1} \cdot (\varepsilon_H)_s(a) \# e_{s(g)} = \sum_{x\in X}(g^{-1} \cdot (1_x)_1 \# e_{s(g)}) \varepsilon_H(a(1_x)_2)\\
 &\overset{(*)}{=}((1_{s(g)})_1\#e_{s(g)})\varepsilon_H(a(1_{t(g)})_2)\overset{(**)}{=}((1_{s(g)})_1\#e_{s(g)})\varepsilon_H(a(g\cdot(1_{s(g)})_2))\\
&\overset{(***)}{=}((1_{s(g)})_1\#e_{s(g)})\varepsilon_H(a(g\cdot (1_{s(g)})_2))\varepsilon_{\kk\GG}(ge_{s(g)})\\
&=((1_{s(g)})_1\#e_{s(g)})\varepsilon_{H\#\kk\GG}(a(g\cdot (1_{s(g)})_2)\#ge_{s(g)})\\
&\overset{(****)}{=}\sum_{x\in X}((1_{x})_1\#e_{x})\varepsilon_{H\#\kk\GG}((a\#g)((1_{x})_2\#e_{x}))\\
&=\varepsilon_s(a\#g). 
\end{align*}

In $(*)$ we used that $g^{-1}\cdot 1_{x}=\delta_{x, t(g)}1_{s(g)}$, in $(**)$ that $g\cdot1_{s(g)}=1_{t(g)}$, in $(***)$ that $\varepsilon_{\kk\GG}(ge_{s(g)})=1$, while in $(****)$ that $(a\#g)((1_x)_2\#e_x)=\delta_{x,s(g)}a(g\cdot (1_{s(g)})_2)\#ge_{s(g)}$. 
Similarly, one can prove $(a_1 \# g)S(a_2 \# g) = \varepsilon_t(a\# g)$.  Finally, we have
\begin{align*}
 S( (a\# g)_1 ) (a \# g)_2 S( (a\# g)_3) &= (1 \# g^{-1})(S_H(a_1)\# 1_{\kk\GG})(a_2 \# g)(1 \# g^{-1})( S_H(a_3) \# 1_{\kk\GG})\\
 &= (g^{-1} \cdot (S_H(a_1)a_2) \# 1_{\kk\GG})(g^{-1}\cdot S_H(a_3) \# g^{-1})\\
 &= g^{-1} \cdot (S_H(a_1)a_2 S_H(a_3)) \# g^{-1} \overset{(*)}{=} g^{-1} \cdot S_H(a) \# g^{-1}\\
 &= (1 \# g^{-1})(S_H(a)\# 1_{\kk\GG}) = S(a\# g).
\end{align*}
Here, we used in $(*)$ that $S_H$ is an antipode for $H$. Therefore, $H \# \kk\GG$ is a weak Hopf algebra.
\end{proof}

In particular, if in the $X$-decomposition of $H$, each $H_x$ is a Hopf algebra, then the next result shows that $H \#\kk \GG$ is actually an $X$-weak Hopf algebra.

\begin{lemma} \label{lem:Hsbase}
Let $\GG$ be an $X$-groupoid and $W =\bigoplus_{x\in X} W_x$ be an $X$-decomposable weak Hopf algebra which is a $\kk\GG$-module algebra. Suppose further that for all $x \in X$, $W_x$ is actually a Hopf algebra. Then the smash product $K:=W \# \kk \GG$ is an $X$-weak Hopf algebra. 
 The complete set of grouplike orthogonal idempotents of $K$ is $\{f_x:=1_{W_{x}}\# e_x\}_{x \in X}$. Consequently,
 \begin{equation}\label{eq:vapt}
     (\varepsilon_K)_t(w_x\#g)=(\varepsilon_W)_t(w_x)\#(\varepsilon_{\kk \GG})_t(g)
 \end{equation}
 for any $w\in W_x$ and $g\in \GG$.
\end{lemma}

\begin{proof}
From Proposition~\ref{prop:smashproduct} it follows that $K$ is a weak Hopf algebra. Moreover, since $W$ is a $\kk \GG$-module algebra and $\kk \GG_0=\kk\{e_x\mid x\in X\}$, then $W$ also is a right $\kk \GG_0$-module via $p\cdot e_x=S^{-1}(e_x)\cdot p=p \,(e_x\cdot 1_H)$ for all $x\in \GG_0=X$ and $p\in W$; see Definition~\ref{def:smashpro}. Note that
\begin{equation*} 
\Delta(1_K) \;=\; \Delta(1_W \otimes_{\kk \GG_0} 1_{\kk \GG}) 
\;=\; \textstyle \sum_{x, y\in X} (1_{W_{x}}\otimes_{\kk \GG_0} e_y)\otimes (1_{W_{x}}\otimes_{\kk \GG_0} e_y).
\end{equation*}

For $x,y \in X$, we have
\begin{equation}
\begin{aligned}\label{eq:Hbase}
1_{W_x} \#e_y 
&=1_{W_x} \otimes_{\kk \GG_0} e_y
\overset{(*)}{=} 1_{W_x}\cdot e_y\otimes_{\kk \GG_0} e_y\\
&= 1_{W_{x}}(e_y\cdot 1_W)\otimes_{\kk \GG_0} e_y
\overset{\text{Rem.~\ref{rem:groupoidmod}}}{=}\delta_{x,\,y}1_{W_x} \otimes_{\kk \GG_0} e_x\\
&=\delta_{x,\,y}1_{W_x} \#e_x.
\end{aligned}
\end{equation}
In $(*)$ we used that $e_y^2=e_y$. Let $f_x:=1_{W_x}\otimes_{\kk \GG_0} e_x$, for each $x \in X.$ Therefore $1_K=\sum_{x\in X}f_x$ and thus $\Delta(1_K)=\sum_{x\in X}f_x\otimes f_x$.  Now the result follows from Proposition~\ref{prop:hs=ht}(a). 

Consequently, for any $w_x\in W_x$ and $g\in \GG$, since $e_y\cdot w_x=\delta_{x,y} w_x$ by Remark~\ref{rem:groupoidmod}, we have
\begin{equation*}
(\varepsilon_K)_t(w_x\#g) \overset{\eqref{eq:Hbase}}{=}\sum_{y\in X}\varepsilon_{W}(1_{W_y}\cdot (e_y\cdot w_x))1_{W_y}\#\varepsilon_{\kk\GG}(e_y\cdot g)e_y=(\varepsilon_W)_t(w_x)\#(\varepsilon_{\kk \GG})_t(g),
\end{equation*}
which proves \eqref{eq:vapt}.
\end{proof}

Two relevant examples of the smash product are the following.

\begin{example}\label{exa:smashproduct}
\begin{enumerate}[label=\normalfont(\alph*)]
    \item  Let $\GG$ be an $X$-groupoid acting by conjugation on an $X$-Lie algebroid $\GLie$. This induces an action of the groupoid algebra $\kk \GG$ on the $X$-universal enveloping algebra $U_{X}(\GLie)$ satisfying the conditions of Proposition~\ref{prop:smashproduct}(a). Thus $U_{X}(\GLie) \# \kk \GG$ is an $X$-weak Hopf algebra as a particular case of Lemma~\ref{lem:Hsbase}.
    \item Let $G$ be a group and $H$ be a Hopf algebra. If $H$ is a $\kk G$-module algebra, then Proposition~\ref{prop:smashproduct} translates into the following well-known result: if there is a group morphism from $G$ to the group of bialgebra automorphisms of $H$, $\pi: G \to \Aut_{\Bialg}(H)$, or equivalently, if the map $\pi_g: H \to H$ given by $a\mapsto g\cdot a$, for  any $g\in G$, is a coalgebra morphism, then the smash product algebra $H \# \kk G$ has structure of Hopf algebra with operations as in \eqref{eq:smashproduct}.
\end{enumerate}
\end{example}

Next, we recall Nikshych's generalization of the Cartier--Gabriel--Kostant--Milnor--Moore theorem to the weak Hopf setting.

\begin{theorem}[{\cite[Theorem 3.2.4]{NDthesis}}]
\label{thm:weakCGKMM}
 Any cocommutative weak Hopf algebra $H$ is isomorphic to $U_X(\GLie) \# \kk \GG$ as weak Hopf algebras, for some $X$-Lie algebroid $\GLie$ and $X$-groupoid $\GG$. 
\end{theorem}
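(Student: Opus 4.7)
The plan is to adapt the classical Cartier--Gabriel--Kostant--Milnor--Moore argument to the weak setting, leveraging that cocommutativity of $H$ forces $H_s = H_t$ (so Proposition~\ref{prop:hs=ht}(a) applies) together with the pointedness of $H$ in characteristic zero. First I would extract the data $X$, $\GG$, and $\GLie$ directly from $H$: set $X = \Gamma(H)_0 = \{e_1, \dots, e_n\}$, the complete set of primitive orthogonal idempotents summing to $1_H$; take $\GG = \Gamma(H)$ as the groupoid of grouplike elements with object set $X$; and for each $x \in X$ define
\[
\mathfrak{g}_x := \{p \in e_x H e_x \mid \Delta(p) = p \otimes e_x + e_x \otimes p\}.
\]
A direct computation expanding $\Delta(pq) = \Delta(p)\Delta(q)$ shows each $\mathfrak{g}_x$ is closed under the commutator bracket of $e_x H e_x$, so $\GLie := \bigoplus_{x \in X} \mathfrak{g}_x$ is an $X$-Lie algebroid.

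Next I would verify that $\GG$ acts on $\GLie$ by conjugation: since every grouplike $g : x \to y$ satisfies $g \in e_y H e_x$ and $S(g) \in e_x H e_y$ with $gS(g) = e_y$, for $p \in \mathfrak{g}_x$ the element $gpS(g) \in e_y H e_y$ satisfies the primitive relation defining $\mathfrak{g}_y$ because $\Delta(g) = g \otimes g$ and $\Delta(S(g)) = S(g) \otimes S(g)$. The induced action makes $U_X(\GLie)$ a left $\kk \GG$-module algebra meeting the coalgebra compatibility \eqref{eq:smash1}--\eqref{eq:smash2}, so Proposition~\ref{prop:smashproduct} produces the weak Hopf algebra $U_X(\GLie) \# \kk \GG$. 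The multiplication of $H$ then yields a candidate weak Hopf algebra homomorphism
\[
\Phi : U_X(\GLie) \# \kk \GG \longrightarrow H, \qquad p \# g \longmapsto p \cdot g,
\]
whose multiplicativity reduces to the identity $gp = (gpS(g))g$ in $H$, and whose compatibility with $\Delta$, $\ep$, and $S$ is straightforward because $\GG$ consists of grouplikes and $\GLie$ of primitives, pulling the structure maps back to the smash formulas of \eqref{eq:smashproduct}.

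Injectivity of $\Phi$ would follow from a local Poincar\'e--Birkhoff--Witt argument: the linear independence of the grouplike basis of $\kk \GG$ inside $H$ together with the classical PBW theorem applied componentwise to each $U(\mathfrak{g}_x) \hookrightarrow e_x H e_x$ shows a natural basis of $U_X(\GLie) \# \kk \GG$ maps to linearly independent vectors of $H$. The main obstacle is surjectivity. In characteristic zero, cocommutativity forces $H$ to be pointed, with coradical exactly $\kk \GG$; I would then induct on the coradical filtration $\kk \GG = H_0 \subset H_1 \subset \cdots$, with the base case $H_0 \subseteq \Phi(U_X(\GLie) \# \kk \GG)$ being immediate. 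The inductive step hinges on the key technical lemma that every $(e_y, e_x)$-skew-primitive $h \in H$ admits a decomposition $h = g \cdot p + \alpha$ for some $g : x \to y$ in $\GG$, $p \in \mathfrak{g}_x$, and $\alpha \in \kk \GG$. Proving this skew-primitive decomposition in the weak setting---where the nontriviality of $\Delta(1_H)$ and the $e_x$-twisted primitive condition obstruct the usual Hopf argument---is the heart of the proof. Once it is in hand, each $H_{n+1}$ is generated over $H_n$ by such elements, all of which lie in the image of $\Phi$, so $H = \bigcup_n H_n$ forces surjectivity and $\Phi$ is a weak Hopf algebra isomorphism.
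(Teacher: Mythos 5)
The paper does not actually prove this statement: it is quoted directly from Nikshych's thesis \cite[Theorem~3.2.4]{NDthesis} and used as a black box, so there is no internal proof to compare against. Judged on its own merits, your outline correctly identifies the ingredients one expects from transporting the classical Cartier--Gabriel--Kostant--Milnor--Moore argument to the weak setting: $X = \Gamma(H)_0$ via Proposition~\ref{prop:hs=ht}(a), $\GG = \Gamma(H)$, the locally primitive spaces $\mathfrak{g}_x \subseteq e_x H e_x$, the conjugation action, and the multiplication map $\Phi$. The verification that $\Phi$ is multiplicative via $gq = (gqS(g))g$ (using $S(g)g = e_{s(g)}$ and $q \in e_{s(g)}He_{s(g)}$) is sound, and compatibility with Proposition~\ref{prop:smashproduct} and \eqref{eq:smashproduct} is plausible.

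However, as written this is a plan rather than a proof, and the two places where all the real work lives are both asserted rather than established. First, you explicitly defer the skew-primitive decomposition lemma (``the heart of the proof''), which is precisely the step where the weak structure --- the nontrivial $\Delta(1_H) = \sum_x e_x \otimes e_x$ and the $e_x$-relative primitivity condition --- could derail the classical induction on the coradical filtration; without it the surjectivity of $\Phi$ is unproven. Second, your injectivity argument silently assumes that the natural map $U(\mathfrak{g}_x) \to e_x H e_x$ is injective. The PBW theorem only gives a basis of the abstract algebra $U(\mathfrak{g}_x)$; knowing that the images of the PBW monomials remain linearly independent inside $H$ is the content of the Cartier--Kostant half of the classical theorem (that in characteristic zero the irreducible component of a grouplike is the enveloping algebra of its primitives), and it must be proven, not invoked, in the weak setting. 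You would also need an argument that the subspaces $U(\mathfrak{g}_{t(g)})\,g$ for distinct $g \in \GG_1$ are independent in $H$, which in the classical case comes from a coradical-filtration or Radford-projection argument and is not addressed here. Since the theorem is available as a citation, the honest options are to cite Nikshych as the paper does, or to supply full proofs of these two lemmas.
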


Now, we characterize the morphisms in $\XCocomWHA$.

\begin{definition} \label{def:linearhom}
Let $R$ and $S$ be two $\kk$-algebras and let $\beta: R \to S$ be a $\kk$-algebra morphism. Let $M \in R\lmod$ and $N \in S\lmod$. We call a group morphism
$\alpha: M \to N$ a {\it $\beta$-linear morphism} if $\alpha(r \cdot m) = \beta(r) \cdot \alpha(m)$ for all $m \in M$ and $r \in R$.
\end{definition}

\begin{lemma} \label{lem:new1}
Let $H = U_X(\GLie) \# \kk\GG$ and $H' = U_X(\GLie') \# \kk \GG'$, where $\GG$ and $\GG'$ are $X$-groupoids, $\GLie$ and $\GLie'$ are $X$-Lie algebroids and $U_X(\GLie)$ is a $\GG$-module algebra and $U_X(\GLie')$ is a $\GG'$-module algebra.
\begin{enumerate}
    \item Suppose that $\pi: \GG \to \GG'$ is an $X$-groupoid morphism and $\tau: \GLie \to \GLie'$ is an $X$-Lie algebroid morphism such that for the corresponding $X$-weak Hopf algebra morphisms $\widetilde{\pi}: \kk \GG \to \kk \GG'$ and $\widetilde{\tau}: U_X(\GLie) \to U_X(\GLie')$, $\widetilde{\tau}$ is a $\widetilde{\pi}$-linear morphism. Then there is an induced $X$-weak Hopf algebra morphism $\phi: H \to H'$. 
    \item Every $X$-weak Hopf algebra morphism from $H$ to $H'$ arises in this way.
\end{enumerate}
\end{lemma}
\begin{proof}
(a) Assume that $\GLie=\bigoplus_{x\in X}\mf{g}_x$ and $\GLie'=\bigoplus_{x\in X}\mf{g}'_x$. For two given $X$-weak Hopf algebra morphisms $\widetilde{\pi}$, $\widetilde{\tau}$ in (a), we can define a map 
\begin{equation}\label{eq:lem6.15}
   \phi: H\longrightarrow H', \quad p_x \# g \mapsto \widetilde{\tau}_x(p_x) \# \widetilde{\pi}(g), 
\end{equation}
where $p_x\in \mf{g}_x, g\in \GG$, for any $x\in X$.
First, we will show that $\phi$ is well-defined. Suppose that $p\otimes_{\kk \GG_0}g=q\otimes_{\kk \GG_0}h$ in $U_X(\GLie)\#\kk \GG$.
Then there exists $\beta \in \kk \GG_0$ such that $g = \beta h$ and hence $p = q \cdot \beta\inv=S\inv(\beta\inv)\cdot q$ by Definition~\ref{def:smashpro}. Since $\widetilde{\tau}$ is a $\widetilde{\pi}$-linear morphism, we have $\widetilde{\tau}(g\cdot p)=\widetilde{\pi}(g)\cdot \widetilde{\tau}(p)$ for $p\in \GLie$ and $g\in \GG$.
Then 
\begin{align*}
\widetilde{\tau}(p) \# \widetilde{\pi}(g)& = \widetilde{\tau}(S\inv(\beta \inv)\cdot q) \# \widetilde{\pi}( \beta h) =(\widetilde{\pi}(S\inv(\beta\inv))\cdot\widetilde{\tau}(q))\#\widetilde{\pi}(\beta h)\\
&=(\widetilde{\tau}(q)\cdot S(\widetilde{\pi}(S\inv(\beta\inv))))\#\widetilde{\pi}(\beta h)
=\widetilde{\tau}(q)\#\widetilde{\pi}(\beta\inv)\widetilde{\pi}(\beta h)\\
&=\ \widetilde{\tau}(q) \# \widetilde{\pi}(h).
\end{align*}
The second-to-last equality holds since $\widetilde{\pi}$ is a weak Hopf algebra morphism. 

By \eqref{eq:smashproduct} and the fact that $\widetilde{\pi}$ and $\widetilde{\tau}$ are $\kk$-coalgebra maps, we conclude that $\phi$ is a $\kk$-coalgebra map. Moreover, $\phi$ is unital as $\widetilde{\pi}$ and $\widetilde{\tau}$ are unital. Next we show that $\phi$ is multiplicative:
\begin{align*}
\phi((p \# g)( q \# h)) &= \phi(p(g \cdot q) \; \# \; gh)) 
= \widetilde{\tau}(p(g \cdot q))\; \# \; \widetilde{\pi}(gh) \\
&= \widetilde{\tau}(p) \widetilde{\tau}(g \cdot q)\; \# \; \widetilde{\pi}(g) \widetilde{\pi}(h) 
\overset{(*)}{=}\widetilde{\tau}(p) (\widetilde{\pi}(g) \cdot \widetilde{\tau}(q)) \; \# \; \widetilde{\pi}(g) \widetilde{\pi}(h) \\
&\overset{(**)}{=}(\widetilde{\tau}(p)\; \# \; \widetilde{\pi}(g))
(\widetilde{\tau}(q) \; \# \; \widetilde{\pi}(h))
=\phi(p \# g)\phi(q \# h), 
\end{align*}
for $g,h \in \GG$ and $p,q \in \GLie$. Here, in $(\ast)$ we used that $\widetilde{\tau}$ is a $\widetilde{\pi}$-linear morphism, and in $(\ast \ast)$ that $\widetilde{\tau}$ is a $\kk$-coalgebra map and Definition~\ref{def:smashpro}. Also, for $g \in \GG$ and $p \in \GLie$ we have
\begin{align*}
   ( S_{H'} \circ \phi ) (p \# g) &= (1_{U_X(\GLie')} \# \widetilde{\tau}(g)^{-1})(S_{U_X(\GLie')}(\widetilde{\pi}(p)) \# 1_{\kk\GG'}) \\
   & \overset{(*)}{=} (\widetilde{\pi}(1_{U_X(\GLie)}) \# \widetilde{\tau}(g^{-1}))( \widetilde{\pi}(S_{U_X(\GLie)}(p)) \# \widetilde{\tau} (1_{\kk\GG}))\\
   &= (\phi \circ S_H ) (p \# g),
\end{align*}
which proves that $\phi$ is a weak Hopf algebra morphism. Here, in $(*)$ we used that both $\widetilde{\pi}$ and $\widetilde{\tau}$ are weak Hopf algebra morphisms. Lastly, since for each $x\in X$ we have
\[
\phi(1_{U(\mf{g}_x)}\# e_x) = \widetilde{\tau}(1_{U(\mf g_x)})\#\widetilde{\pi}(e_x)=1_{U(\mf g'_x)} \# e_x,
\]
then $\phi$ preserves the set $X$ by Lemma~\ref{lem:Hsbase}. Therefore $\phi$ is an $X$-weak Hopf algebra morphism.

(b) Let $\psi: H \to H'$ be an $X$-weak Hopf algebra morphism. Then the natural restrictions $\bar{\pi}: \kk \GG \to H'$ and $\bar{\tau}: U_X(\GLie) \to H'$ of $\psi$ are both $X$-weak Hopf algebra morphisms. By Theorems~\ref{thm:repkGrp} and \ref{thm:adjLie}, there exist an $X$-groupoid morphism $\pi=j_1\circ \bar{\pi} \circ i_1: \GG \to \GG' $ and an $X$-Lie algebroid morphism $\tau=j_2\circ \bar{\tau} \circ i_2: \GLie  \to \GLie'$, where $\GG\overset{i_1}{\to} H\overset{\psi}{\to} H'\overset{j_1}{\to} \GG'$ and $\GLie\overset{i_2}{\to} H\overset{\psi}{\to} H'\overset{j_2}{\to} \GLie'$ are the natural inclusions and projections. In other words, $\pi(g)=\psi(1_{U_X(\GLie)} \# g) \in \GG'_1$, for all $g\in \GG_1$, and $\tau(p)=\psi(p \# 1_{\kk\GG}) \in \GLie'$, for all $p\in\GLie$. But, again by Theorems~\ref{thm:repkGrp} and \ref{thm:adjLie}, there are induced $X$-weak Hopf algebra morphisms $\widetilde{\pi}: \kk \GG \to \kk \GG'$ and $\widetilde{\tau}: U_X(\GLie) \to U_X(\GLie')$, given by $\widetilde{\pi}=\kk(\pi)$ and $\widetilde{\tau}=U_X(\tau)$. Since for any $g\in \GG_1$ and $p\in\mf{g}_{s(g)}$ we have
\begin{align*}
    \widetilde{\tau}(g\cdot p) &= \psi(g\cdot p \# 1_{\kk \GG}) = \psi(S^{-1}_H(p)gp \# 1_{\kk\GG}) = \psi(S^{-1}_{H}(p) \# 1_{\kk \GG}) \psi(1_{U_X(\GLie)} \# g) \psi (p \# 1_{\kk \GG})\\
    &= \widetilde{\tau}(S^{-1}_{H}(p))\widetilde{\pi}(g) \widetilde{\tau}(p) =   S^{-1}_{H'}(\widetilde{\tau}(p))\widetilde{\pi}(g) \widetilde{\tau}(p) = \widetilde{\pi}(g)\cdot \widetilde{\tau}(p),
\end{align*}
it follows that $\widetilde{\tau}$ is $\widetilde{\pi}$-linear. Using part (a), there exists an induced $X$-weak Hopf algebra morphism $\phi: H \to H'$ given by \eqref{eq:lem6.15}. By construction, it is clear that $\psi=\phi$.
\end{proof}

\begin{theorem}
\label{thm:cocomweakHopf}
Let $A = \bigoplus_{x \in X} A_x$ be an $X$-decomposable $\kk$-algebra, let \[K = U_X(\Der_X(A)) \# \kk(\XAutA),\] and let $H$ be a cocommutative $X$-weak Hopf algebra.
Write $H = U_X(\GLie) \# \kk \GG$ for some $X$-Lie algebroid $\GLie$ and some $X$-groupoid $\GG$. Then:
\begin{enumerate}[font=\upshape] 
\item The natural actions of $U_X(\Der_X(A))$ and $\kk(\XAutA)$ on $A$ give $A$ the structure of a $K$-module algebra. We denote this action $\ell_{K, A}$ and write $k \rt a := \ell_{K, A}(k \otimes a)$ where $k \otimes a \in K \otimes A$.
 
\item Suppose that $A$ is an $H$-module algebra via $\ell_{H, A}$ and denote $h \cdot a := \ell_{H, A}(h \otimes a)$ for $h \otimes a \in H \otimes A$. Then there is a unique $X$-weak Hopf algebra morphism $\phi: H \to K$ such that $h \cdot a = \phi(h) \rt a$ for all $h \otimes a \in H \otimes A$.

\item Every $X$-weak Hopf algebra morphism $\phi: H \to K$ gives $A$ the structure of an $H$-module algebra via $\ell_{H, A}(h \otimes a) = \ell_{K, A} (\phi(h) \otimes a)$ for all $h \otimes a \in H \otimes A$.
\end{enumerate}

Hence, $\Sym_{\XCocomWHA}(A) = K$.
\end{theorem}

\begin{proof}
It follows from \Cref{prop:smashproduct} that $U_X(\Der_X(A))\# \kk(\XAutA)$ is a cocommutative weak Hopf algebra.

 (a) By the assumption, there are two action maps $\ell_{\kk(\XAutA), A}$ in Proposition~\ref{prop:kGrpd} and $\ell_{U_X(\Der_X(A)), A}$ in Proposition~\ref{prop:U(GLie)}. Hence, one can define an action map $\ell_{K,A}: K\otimes A\to A$ via 
 \[
 k\rt a:=\ell_{K,A}((\delta\#\alpha)\otimes a)=\delta\rt(\alpha\rt a), \qquad k=\delta\# \alpha \in K, \quad a,b\in A. 
 \]
Furthermore, 
\begin{align*}
    k\rt (ab)&=\delta\rt(\alpha\rt (ab))=\delta\rt((\alpha_1\rt a)(\alpha_2\rt b))\\
    &=(\delta_1\rt (\alpha_1\rt a))(\delta_2\rt(\alpha_2\rt b))=(k_1\rt a)(k_2\rt b),
\end{align*}
and
\[k\rt 1_A=\delta\rt(\alpha\rt 1_A)=
((\varepsilon_{U_X(\Der_X(A)})_t(\delta)\#(\varepsilon_{\kk(\XAutA)})_t(\alpha))\cdot 1_A\overset{\text{Lem.~\ref{lem:Hsbase}}}{=}\varepsilon_K(k)\cdot 1_A.\]
Therefore, $A$ is a $K$-module algebra.

(b) Since $A$ is an $H$-module algebra,
we can define a $\kk\GG$-module algebra structure on $A$ via $g \cdot a:= \ell_{H, A}(1_{U_X(\GLie)} \# g \otimes a)$ for $g \in \GG$ and $a \in A$. It is clear that $A$ is a $\kk\GG$-module. By \eqref{eq:Hbase}, we have that
\begin{equation}
\label{eq:HHbase}
1_{U(\mf g_x)}\# g= 1_{U(\mf g_x)}\# e_{t(g)}g=\delta_{x,\,t(g)}1_{U(\mf g_x)}\#g, 
\end{equation} 
and therefore
\begin{equation}
\label{eq:HHbase2}
\begin{aligned}
 \Delta(1_{U_X(\GLie)}\#g)& = \sum_{y\in X} (1_{U(\mf g_y)}\#g)\otimes (1_{U(\mf g_y)}\#g) \overset{\eqref{eq:HHbase}}{=}(1_{U(\mf g_{t(g)})}\#g)\otimes(1_{U(\mf g_{t(g)})}\#g)\\
 &\overset{\eqref{eq:HHbase}}{=}(1_{U_X(\GLie)}\#g)\otimes (1_{U_X(\GLie)}\#g).
\end{aligned}
\end{equation} 
Notice that
\begin{align*}
g \cdot (ab) &= \ell_{H, A}(1_{U_X(\GLie)} \# g \otimes ab)=(1_{U_X(\GLie)} \# g)\cdot (ab)\\
&\overset{\eqref{eq:HHbase2}}{=}\ell_{H, A}(1_{U_X(\GLie)} \# g \otimes a)\cdot \ell_{H, A}(1_{U_X(\GLie)} \# g \otimes b)\\
&=(g\cdot a)(g\cdot b).
\end{align*}
Furthermore, 
\begin{align*}
g \cdot 1_A &=\ell_{H, A}(1_{U_X(\GLie)} \# g\otimes 1_A)=(1_{U_X(\GLie)} \# g)\cdot 1_A\\
&= \ept(1_{U_X(\GLie)} \# g) \cdot1_A\overset{\text{Lem.~\ref{lem:Hsbase}}}{=}(1_{U_X(\GLie)} \# (\varepsilon_{\kk \GG})_t(g))\cdot 1_A\\
&=\ell_{H, A}(1_{U_X(\GLie)} \# (\varepsilon_{\kk \GG})_t(g)\otimes 1_A)\\
&=(\varepsilon_{\kk \GG})_t(g)\cdot 1_A.
\end{align*}

Next, define a $\GLie$-action on $A$ via $p_x\cdot b:= \ell_{H, A}(p_x \# 1_{\kk G} \otimes b)$ for all $p_x\in \mf{g}_x$ with $b\in A$. By a similar argument to \eqref{eq:Hbase}, we show $p_x\# e_y=\delta_{x,y}p_x\#e_x$ and so $(p_x\#1_{\kk\GG})=(p_x\# e_x)$ for $p_x\in {\mf g}_x$. It is straightforward to check that 
\begin{align*}
p_x\cdot(ab)
 &=(p_x\cdot a)(1_{U(\mf{g}_x)}\cdot b)+(1_{U(\mf{g}_x)}\cdot a)(p_x\cdot b),\\
p_x \cdot 1_A &=0= \ept(p_x) \cdot 1_A,
\end{align*}
since
\begin{equation*}
    \ept(p_y)=(p_y)_1 S((p_y)_2) = 1_{U(\mf g_y)}S(p_y)+p_yS(1_{U(\mf g_y)})=-p_y + p_y=0,
\end{equation*}
for $p_y\in \mf g_y$ and $a, b\in A$.
By Propositions~\ref{prop:Liealgebroidalg} and \ref{prop:U(GLie)}, $A$ is a $U_X(\GLie)$-module algebra. Consequently, there exist $X$-weak Hopf algebra maps 
\begin{equation*}
    \widetilde{\pi}:=\nu_{\kk(\XAutA)}\kk(\pi): \kk \GG \to \kk(\XAutA) \quad \text{and} \quad \widetilde{\tau}: U_X(\GLie) \to U_X(\Der_X(A))
\end{equation*}
by Propositions~\ref{prop:kGrpd}(b) and \ref{prop:U(GLie)}(b), respectively. Similar to the argument used to prove Lemma~\ref{lem:new1}(b), we can show that $\widetilde{\tau}$ is $\widetilde{\pi}$-linear.
It follows from Lemma~\ref{lem:new1}(a) and Propositions~\ref{prop:kGrpd}(c) and \ref{prop:U(GLie)}(c) that there exists a weak Hopf algebra morphism from $K$ to $H$ such that $h\cdot a=\phi(h)\cdot a$ for $h\in H$.
 
(c) For any given weak Hopf algebra morphism $\phi: H\to K$, we will show that $A$ is an $H$-module algebra via $h \cdot a:= \ell_{H, A}(h\otimes a)=\ell_{K, A}(\phi(h) \otimes a)=\phi(h)\cdot a$, for all $h \in H$ and $a \in A$. Since $\phi$ is a weak Hopf algebra morphism,
\[h\cdot (ab)=\phi(h)\cdot (ab)=(\phi(h_1)\cdot a)(\phi(h_2)\cdot b)=(h_1\cdot a)(h_2\cdot b).\]
It follows that
\begin{equation}
\label{eq:eptt}
\phi((1_H)_1)\otimes \phi((1_H)_2)
=\Delta(\phi(1_H))=\Delta(1_K)= (1_K)_1\otimes (1_K)_2.
\end{equation}
Thus,
\begin{align*}
h \cdot 1_A 
&=\phi(h)\cdot 1_A =(\varepsilon_K)_t(\phi(h)) \cdot1_A\\
&\overset{\eqref{eq:eptt}}{=}
\left(\sum \varepsilon_K(\phi((1_H)_1)\phi(h))\phi((1_H)_2)\right) \cdot1_A 
=(\varepsilon_K \phi\otimes \phi)((1_H)_1h\otimes (1_H)_2) \cdot 1_A\\
&\overset{(\ast)}{=}(\varepsilon_H\otimes \phi)((1_H)_1h\otimes (1_H)_2) \cdot1_A
= \phi\left((\varepsilon_H)_t(h)\right)\cdot 1_A\\
&\overset{(\ast\ast)}{=}(\varepsilon_H)_t(h)\cdot 1_A
\end{align*}
as desired. Here, in $(\ast)$ we used that $\phi$ is a counital map, and in $(\ast \ast)$ the assumption. 

Therefore,  $\Sym_{\XCocomWHA}(A) = K$ as desired.
\end{proof}

 We make the following remark, which is the special case of \cref{thm:cocomweakHopf} when $|X|=1$.

\begin{remark}\label{CocomHopf}
Let $A$ be a $\kk$-algebra. Let $K = U(\Der(A)) \# \kk(\AutA)$. Then $A$ is a $K$-module algebra and for a cocommutative Hopf algebra $H$, the following are equivalent.
\begin{enumerate}[font=\upshape]
 \item[(i)] $A$ is an $H$-module algebra.

 \item[(ii)] There exists a Hopf algebra morphism $\phi: H \to K$ giving $A$ the structure of an $H$-module algebra via $\ell_{H, A}(h\otimes a)=\ell_{K, A}(\phi(h)\otimes a)$ for all $h\otimes a\in H\otimes A$. 
\end{enumerate}
Hence, $\Sym_{\CocomHopf}(A) = K$.
\end{remark}

As mentioned in the introduction, one potential research direction is to extend these results to the non-cocommutative case. For instance, there has been much recent activity on partial actions of Hopf-like structures in this case (see, e.g., \cite{FMF,FMS,MPS}).

	
\section{Examples: Actions on polynomial algebras} \label{sec:examples}
	
In this section we illustrate the results in this paper for \emph{graded} actions on polynomial algebras by general linear Hopf-like structures. 
As a warm up, we begin by studying the (classical) indecomposable case before considering the $X$-decomposable case.
	
\subsection{Indecomposable module algebra case} \label{sec:examples-indecomp}

Let $A:=\kk[x_1, \dots, x_n]$ be a polynomial algebra, which is isomorphic to the symmetric algebra $S(V)$ on an $n$-dimensional vector space $V$. We consider the following group and Lie algebra, respectively,
\[
T_n:= \AutA \; \; \; \qquad \text{ and } \; \; \; \qquad
W_n := \Der(A).
\]
When $n \geq 3$, the group $T_n$ contains an automorphism of wild type \cite{SU} and so is not fully understood. Hence, we restrict our attention to the subgroup of \emph{graded} automorphisms of $A$, which we identify with the general linear group:
\[
\GL_n(\kk) = \GL(V) =: \Aut_{\textnormal{GrAlg}}(A).
\]

On the other hand, $W_n$ is well-known to be the infinite-dimensional Lie algebra consisting of derivations of the form $f_1(\underline{x}) \frac{\partial}{\partial x_1} + \cdots + f_n(\underline{x}) \frac{\partial}{\partial x_n}$, for $f_i(\underline{x}) \in A$ (see, e.g., \cite[Section~1.2]{Bahturin}). The general linear Lie algebra is a Lie subalgebra of $W_n$ given as follows:
\[
\mf{gl}_n(\kk) = \text{span}_{\kk}\left\{\textstyle x_i \frac{\partial}{\partial x_j}\right\}_{i,j = 1, \dots, n} =: \Der_{\textnormal{Lin}}(A).
\]
Moreover, $\GL_n(\kk)$ acts on $\mf{gl}_n(\kk)$ by conjugation after identifying $x_i \frac{\partial}{\partial x_j}$ with the elementary matrix $E_{i,j}$. With the maps,
\[
\pi: \GL_n(\kk) \longrightarrow T_n \; \; \text{(inclusion of groups)}, \quad 
\tau: \mf{gl}_n(\kk) \longrightarrow W_n \; \; \text{(inclusion of Lie algebras)},
\]
we obtain the Hopf algebra maps, 
\[
\widetilde{\pi}: \kk\GL_n(\kk) \longrightarrow \kk T_n \; \; \; \qquad \text{ and } \qquad \; \; \; 
\widetilde{\tau}: U(\mf{gl}_n(\kk)) \longrightarrow U(W_n)
\]
and $\widetilde{\tau}$ is $\widetilde{\pi}$-linear. Hence by Remarks~\ref{rem:group}, \ref{rem:kGrp}, \ref{rem:Lie}, \ref{rem:U(Lie)}, and \ref{CocomHopf}, we obtain the following result.
\begin{proposition}
The polynomial algebra $\kk[x_1, \dots, x_n]$ is a module algebra over the following general linear Hopf-like structures:
$$\GL_n(\kk), \qquad \mf{gl}_n(\kk), \qquad \kk\GL_n(\kk), \qquad U(\mf{gl}_n(\kk)), \qquad U(\mf{gl}_n(\kk)) \# \kk\GL_n(\kk).$$
\end{proposition}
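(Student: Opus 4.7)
The plan is to apply, in sequence, the five equivalence results from Section~\ref{sec:classical} to the data $(\pi, \tau, \widetilde{\pi}, \widetilde{\tau})$ already constructed in the paragraph preceding the statement. First, since $\pi: \GL_n(\kk) \hookrightarrow T_n = \Aut_{\Alg}(A)$ is a group homomorphism, the implication (c)$\Rightarrow$(a) of Proposition~\ref{prop:group} immediately gives that $A$ is a $\GL_n(\kk)$-module algebra. Likewise, the inclusion $\tau: \mathfrak{gl}_n(\kk) \hookrightarrow W_n = \Der(A)$ is a Lie algebra homomorphism, so Proposition~\ref{prop:Lie} gives that $A$ is a $\mathfrak{gl}_n(\kk)$-module algebra.

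Passing to the linearizations, the Hopf algebra homomorphism $\widetilde{\pi}: \kk\GL_n(\kk) \to \kk T_n$ composed with the canonical action of $\kk T_n$ on $A$ yields, via Proposition~\ref{prop:kG}, that $A$ is a $\kk\GL_n(\kk)$-module algebra. Similarly, composing $\widetilde{\tau}: U(\mathfrak{gl}_n(\kk)) \to U(W_n)$ with the canonical $U(W_n)$-action on $A$ gives, via Proposition~\ref{prop:Ug}, that $A$ is a $U(\mathfrak{gl}_n(\kk))$-module algebra.

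For the final case, $H := U(\mathfrak{gl}_n(\kk)) \# \kk\GL_n(\kk)$ is a cocommutative Hopf algebra by the Cartier--Gabriel--Kostant--Milnor--Moore theorem, where $\GL_n(\kk)$ acts on $\mathfrak{gl}_n(\kk)$ by conjugation under the identification $x_i\, \frac{\partial}{\partial x_j} \leftrightarrow E_{i,j}$. I would then invoke the implication (b)$\Rightarrow$(a) of Theorem~\ref{thm:cocomHopf}, supplying the homomorphisms $\pi$ and $\tau$ already constructed; the only remaining hypothesis is that $\widetilde{\tau}$ be $\widetilde{\pi}$-linear in the sense of Definition~\ref{def:linearhom}, i.e., $\widetilde{\tau}(g \cdot p) = \widetilde{\pi}(g) \cdot \widetilde{\tau}(p)$ in $U(W_n)$ for every $g \in \GL_n(\kk)$ and $p \in \mathfrak{gl}_n(\kk)$.

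I expect this compatibility to be the only non-routine step. Because $\pi$ and $\tau$ are both inclusions, it reduces to verifying that the conjugation action of $\GL_n(\kk)$ on $\mathfrak{gl}_n(\kk) \subset W_n$ agrees with the restriction to $\mathfrak{gl}_n(\kk)$ of the conjugation action of $T_n$ on $W_n$, both computed inside $\End(A)$. This is a direct chain-rule calculation: for $g \in \GL_n(\kk)$ one expands $g \circ (x_i\, \frac{\partial}{\partial x_j}) \circ g^{-1}$ as an endomorphism of $A$ and checks that the resulting linear derivation corresponds to the matrix conjugate $g E_{i,j} g^{-1}$ under the chosen identification. Once this compatibility is in hand, Theorem~\ref{thm:cocomHopf} concludes that $A$ is an $H$-module algebra, completing the proof.
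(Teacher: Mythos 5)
Your proposal is correct and follows essentially the same route as the paper: the paper's ``proof'' is precisely the preceding paragraph, which constructs the inclusions $\pi$ and $\tau$, obtains $\widetilde{\pi}$ and $\widetilde{\tau}$ from the adjunctions, observes that $\widetilde{\tau}$ is $\widetilde{\pi}$-linear because both maps are inclusions, and then cites Propositions~\ref{prop:group},~\ref{prop:Lie},~\ref{prop:kG},~\ref{prop:Ug} and Theorem~\ref{thm:cocomHopf} exactly as you do. Your only addition is spelling out the chain-rule check that conjugation by $\GL_n(\kk)$ inside $\End(A)$ restricts on $\mathfrak{gl}_n(\kk)$ to matrix conjugation, which the paper leaves implicit.
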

	
\subsection{$X$-decomposable module algebra case} \label{sec:examples-Xdecomp} 

For $x \in X$, let $A_x:=S(V_x)$ be the symmetric algebra on a finite-dimensional vector space $V_x$. Let $n_x$ denote $\dim(V_x)$. Take $V:=\bigoplus_{x \in X} V_x$ to be the corresponding $X$-decomposable vector space, and $A := \bigoplus_{x \in X} A_x$ to be the corresponding $X$-decomposable $\kk$-algebra. Take
\[
T_X := \XAutA,
\]
to be the groupoid from Definition~\ref{def:autgroupoid2}, i.e., the objects of $T_X$ are the elements of $X$ and for $x,y \in X$, the morphisms $\Hom_{T_X}(x,y)$ are the unital $\kk$-algebra isomorphisms $A_x \to A_y$. 

Since $\GL(V_x)$ is a subgroupoid of the groupoid $\GL_X(V)$ from Definition~\ref{def:autgroupoid1}, the result below is straightforward, where the second statement follows from Theorem~\ref{thm:repkGrp}(b) (or Proposition~\ref{prop:kGrpd}).

\begin{lemma} \label{lem:poly-grpd}
There is an $X$-groupoid morphism 
\[
\pi: \GL_X(V) \longrightarrow T_X,
\]
given by $\pi(V_x) = S(V_x)$ for all $x \in X$. This also yields an $X$-weak Hopf algebra morphism:
\[
\widetilde{\pi}: \kk \GL_X(V) \longrightarrow \kk T_X. 
\]
\end{lemma}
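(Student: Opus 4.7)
The plan is to construct the functor $\pi$ directly from the universal property of the symmetric algebra, and then obtain $\widetilde{\pi}$ by applying the $\kk(-)$ functor of Theorem~\ref{thm:grpd-adj-intro}(b). There is no real obstacle here: the content is routine once one views $S(-)$ as a functor on the category of vector spaces with linear isomorphisms.

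First, I would define $\pi$ on objects by $\pi(V_x) := S(V_x) = A_x$ for every $x \in X$. For a morphism $f \in \Hom_{\GL_X(V)}(V_x, V_y)$, i.e.\ a linear isomorphism $f : V_x \to V_y$, the universal property of the symmetric algebra produces a unique unital algebra homomorphism $S(f) : S(V_x) \to S(V_y)$ extending $f$. Applying the same construction to $f^{-1}$ and using uniqueness gives $S(f) \circ S(f^{-1}) = S(\id_{V_y}) = \id_{A_y}$ and similarly on the other side, so $S(f)$ is an invertible unital algebra homomorphism from $A_x$ to $A_y$. Therefore $\pi(f) := S(f)$ lies in $\Hom_{(T_n)_X}(A_x, A_y)$ by Definition~\ref{def:autgroupoid2}. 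Functoriality, i.e.\ $S(g \circ f) = S(g) \circ S(f)$ and $S(\id_{V_x}) = \id_{A_x}$, follows again by the uniqueness clause of the universal property. By construction $\pi$ sends $V_x$ to $A_x$ for each $x \in X$, so $\pi$ is $X$-preserving.

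Finally, identifying the object set $\{V_x\}_{x\in X}$ of $\GL_X(V)$ and the object set $\{A_x\}_{x\in X}$ of $(T_n)_X$ with $X$ in the obvious way, both $\GL_X(V)$ and $(T_n)_X$ are objects of $\XGrpd$ and $\pi$ is a morphism therein. Theorem~\ref{thm:grpd-adj-intro}(b) then provides a functor $\kk(-) : \XGrpd \to \XWeak$; evaluating this functor at $\pi$ yields the desired $X$-weak Hopf algebra morphism $\widetilde{\pi} := \kk(\pi) : \kk\GL_X(V) \to \kk(T_n)_X$. Alternatively, one may invoke Corollary~\ref{cor:kGrpd} directly. \qed
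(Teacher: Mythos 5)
Your proof is correct and matches the paper's intended argument: the paper declares the first statement straightforward (the functoriality of $S(-)$ on linear isomorphisms is exactly the content) and obtains $\widetilde{\pi}$ by applying the functor $\kk(-)$ from Theorem~\ref{thm:repkGrp}(b), just as you do. Your identification of the object sets with $X$ so that both groupoids live in $\XGrpd$ is also consistent with the paper's conventions.
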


On the other hand, take
\[
W_X:= \Der_X(A):= \textstyle \bigoplus_{x \in X} \Der(A_x),
\]
to be the $X$-Lie algebroid from Definition~\ref{def:Liealgebroid} and Notation~\ref{not:GLLie2}. The next result is straightforward, with the second statement following from Proposition~\ref{prop:U(GLie)}.

\begin{lemma} \label{lem:poly-Liealgd}
There is an $X$-Lie algebroid morphism 
\[
\tau: \mf{GL}_X(V) \to \Der_X(A),
\]
given by inclusion. This also yields an $X$-weak Hopf algebra morphism:
\[
\widetilde{\tau}:  U_X(\mf{GL}_X(V)) \longrightarrow U_X( \Der_X(A)).
\]
\end{lemma}

Working component-wise, we also have the following result.

\begin{lemma} \label{lem:poly-linearity}
Given $\widetilde{\pi}$ from Lemma~\ref{lem:poly-grpd}, we have that $\widetilde{\tau}$ from Lemma~\ref{lem:poly-Liealgd} is $\widetilde{\pi}$-linear.   
\end{lemma}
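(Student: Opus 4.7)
The plan is to verify $\widetilde\pi$-linearity by reducing to a component-wise naturality check, and then extending by algebra-homomorphism-ness of $\widetilde\pi$ and $\widetilde\tau$.

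First, I will identify the two module structures in play. The groupoid $\GL_X(V)$ acts on the $X$-Lie algebroid $\bigoplus_{x\in X}\mathfrak{gl}_{n_x}(\kk)$ by conjugation: a morphism $g\colon V_x\to V_y$ in $\GL_X(V)_1$ sends $p\in\mathfrak{gl}(V_x)$ to $g\,p\,g^{-1}\in\mathfrak{gl}(V_y)$, and sends components indexed by $x'\neq s(g)$ to $0$. This extends canonically to an action of $\kk\GL_X(V)$ on $\bigoplus_{x\in X}U(\mathfrak{gl}_{n_x}(\kk))$, so that the smash product realising Theorem~\ref{thm:cocomweakHopf-intro} is well-defined. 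Analogously, $\kk(T_n)_X$ acts by conjugation on $U(\Der_X(A))$ via the morphisms in $(T_n)_X$.

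Second, since both $\widetilde\pi$ and $\widetilde\tau$ are algebra homomorphisms, and the two actions are defined by conjugation (hence behave multiplicatively in both arguments), it suffices to verify the identity
\begin{equation*}
\widetilde\tau(g\cdot p)=\widetilde\pi(g)\cdot\widetilde\tau(p)
\end{equation*}
on generators, namely for $g\in\GL_X(V)_1$ a single morphism and $p\in\mathfrak{gl}_{n_x}(\kk)$ a Lie algebra element with $x=s(g)$. The case where the indices do not match sends both sides to $0$ by component-wise vanishing, so the only substantive case is $p\in\mathfrak{gl}(V_{s(g)})$.

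Third, in this generating case the identity reduces to the functoriality of the symmetric algebra $S(-)$ with respect to linear isomorphisms. Explicitly, $\pi(g)=S(g)\colon S(V_{s(g)})\to S(V_{t(g)})$ is the graded algebra isomorphism induced by $g\colon V_{s(g)}\to V_{t(g)}$, while $\tau$ is the inclusion $\mathfrak{gl}(V_x)\hookrightarrow\Der(S(V_x))$ extending a linear endomorphism of $V_x$ to the unique derivation of $S(V_x)$ that agrees with it on degree one. Since both sides $\tau(g p g^{-1})$ and $\pi(g)\tau(p)\pi(g)^{-1}$ are derivations of $S(V_{t(g)})$, and derivations of a symmetric algebra are determined by their values on generators, it is enough to check that they agree on $V_{t(g)}$; there, both reduce to $v\mapsto g(p(g^{-1}(v)))$ by direct computation. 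This establishes $\tau(g\cdot p)=\pi(g)\cdot\tau(p)$ at the level of generators.

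The main (minor) obstacle is the bookkeeping for the $X$-decomposition: one must check that the component-wise action of $\GL_X(V)$ on $\bigoplus_x\mathfrak{gl}_{n_x}(\kk)$ is compatible with the target-source matching in the groupoid (so that $g$ sends $\mathfrak{gl}_{n_{s(g)}}(\kk)$ into $\mathfrak{gl}_{n_{t(g)}}(\kk)$ and annihilates all other components), matching the corresponding feature of the $(T_n)_X$-action on $\Der_X(A)$. Once this is in place, the remainder is a direct computation on generators and a formal extension by the algebra-homomorphism properties of $\widetilde\pi$ and $\widetilde\tau$, as reviewed in Theorem~\ref{thm:cocomweakHopf} and Proposition~\ref{prop:U(GLie)}.
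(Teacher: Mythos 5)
Your argument is correct and is essentially the one the paper intends: the paper states this lemma without proof, remarking only that it holds ``working component-wise,'' and your reduction to generators followed by the check that $S(g)\circ\tau(p)\circ S(g)^{-1}$ and $\tau(gpg^{-1})$ are derivations of $S(V_{t(g)})$ agreeing on degree-one elements is precisely the component-wise verification being alluded to. The bookkeeping for mismatched source/target components that you flag is handled correctly, so there is no gap.
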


Finally, Lemmas~\ref{lem:poly-grpd}, \ref{lem:poly-Liealgd}, and~\ref{lem:poly-linearity} yield the following consequences of Propositions~\ref{prop:groupoidalg}, \ref{prop:kGrpd}, and \ref{prop:Liealgebroidalg} and Theorem~\ref{thm:cocomweakHopf}.

\begin{proposition}
For an $X$-decomposable vector space $V:=\bigoplus_{x \in X} V_x$, the $X$-decomposable $\kk$-algebra $\bigoplus_{x \in X} S(V_x)$ is a module algebra over the following general linear $X$-Hopf-like structures:
\[
\GL_X(V), \quad\; \mf{GL}_X(V), \quad \; \kk\GL_X(V), \quad \; U_X(\mf{GL}_X(V)), \quad \; \bigoplus_{x \in X} U_X(\mf{GL}_X(V)) \# \kk\GL_X(V).
\]
\end{proposition}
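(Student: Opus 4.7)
The plan for proving this five-part proposition is to directly invoke the correspondence theorems established earlier in the paper, applied to the three structure-preserving maps already packaged in Lemmas~\ref{lem:poly-grpd}--\ref{lem:poly-linearity}. First, I would observe that $A_X := \bigoplus_{x \in X} S(V_x)$ is $X$-decomposable essentially by construction, since each symmetric algebra $S(V_x)$ is unital, so the hypothesis on $A_X$ required by each of the relevant theorems is free.

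For the first four structures, the plan is essentially mechanical. For the $\GL_X(V)$-action, I would cite Proposition~\ref{prop:grpd-intro} applied to the $X$-preserving groupoid homomorphism $\pi : \GL_X(V) \to (T_n)_X$ from Lemma~\ref{lem:poly-grpd}. For the $\kk \GL_X(V)$-action, I would cite Corollary~\ref{cor:grpd-mod-alg} applied to the $X$-weak Hopf algebra morphism $\widetilde{\pi}$ from the same lemma. For the $\bigoplus_{x \in X} \mathfrak{gl}_{n_x}(\kk)$-action and the $\bigoplus_{x \in X} U(\mathfrak{gl}_{n_x}(\kk))$-action, I would invoke Proposition~\ref{prop:finiteLiealgd-intro} with the $X$-Lie algebroid homomorphism $\tau$ and the $X$-weak Hopf algebra morphism $\widetilde{\tau}$ from Lemma~\ref{lem:poly-Liealgd}.

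For the final assertion concerning the smash product $\bigoplus_{x \in X} U(\mathfrak{gl}_{n_x}(\kk)) \# \kk \GL_X(V)$, the plan is to apply Theorem~\ref{thm:cocomweakHopf-intro}. Before invoking it, I would verify that $\GL_X(V)$ acts on the $X$-Lie algebroid $\bigoplus_{x \in X} \mathfrak{gl}_{n_x}(\kk)$ by conjugation: a morphism $g : V_x \to V_y$ in $\GL_X(V)$ should send $T \in \mathfrak{gl}(V_x)$ to $g T g^{-1} \in \mathfrak{gl}(V_y)$, so that by Remark~\ref{rem:smashproduct}(1) the left-hand smash product is a bona fide cocommutative weak Hopf algebra. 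The remaining compatibility hypothesis of Theorem~\ref{thm:cocomweakHopf-intro}(b), that $\widetilde{\tau}$ is $\widetilde{\pi}$-linear, is exactly Lemma~\ref{lem:poly-linearity}. Theorem~\ref{thm:cocomweakHopf-intro} then delivers the required $X$-weak Hopf algebra homomorphism into $U_X(\Der_X(A_X)) \# \kk(T_n)_X$, and hence the desired module algebra structure on $A_X$.

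I do not anticipate a serious obstacle in any step, since the work is already done by the main theorems and the supporting lemmas; the only point that warrants explicit attention is checking that the conjugation action is a valid $\GL_X(V)$-action on the $X$-Lie algebroid, which is a routine component-wise verification using that each $g \in \Hom_{\GL_X(V)}(V_x, V_y)$ is a linear isomorphism. If anything were to become unexpectedly delicate, it would be tracking how the source and target labels $x, y \in X$ interact in the smash product, but this has already been absorbed into the $X$-weak Hopf structure in Section~\ref{sec:cocomweak}.
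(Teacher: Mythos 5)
Your proposal is correct and matches the paper's approach exactly: the paper likewise derives the five module algebra structures by feeding Lemmas~\ref{lem:poly-grpd}, \ref{lem:poly-Liealgd}, and \ref{lem:poly-linearity} into Proposition~\ref{prop:grpd-intro}, Corollary~\ref{cor:grpd-mod-alg}, Proposition~\ref{prop:finiteLiealgd-intro}, and Theorem~\ref{thm:cocomweakHopf-intro}. Your explicit check that $\GL_X(V)$ acts on $\bigoplus_{x \in X} \mathfrak{gl}_{n_x}(\kk)$ by conjugation (so that Remark~\ref{rem:smashproduct} applies to the smash product) is a point the paper leaves implicit, and it is correctly handled.
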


\section*{Declarations}
 
Ethical Approval: not applicable.

 \medskip 
Funding:
\begin{itemize}
    \item  The first author was partially supported by the research fund of the Department of Mathematics, Universidad Nacional de Colombia - Sede Bogot\'a, Colombia, HERMES code 52464, and the Fulbright Visiting Student Researcher Program.

    \item  The fourth author was partially supported by an AMS--Simons Travel Grant and Simons Foundation grant \#961085.
\end{itemize}

 \medskip
Availability of data and materials: not applicable.


\section*{Acknowledgements} 
We thank Chelsea Walton for many insightful conversations and for her valuable comments on our manuscript. We also extend our gratitude to the anonymous referee for their feedback and helpful corrections, which greatly improved the overall quality of the paper. Finally, the first, second, and fourth authors would like to congratulate the third author on the birth of her child, who happily (and indirectly) contributed to the writing of this paper.

\bibliographystyle{alpha}
\bibliography{biblio}
\end{document}